\documentclass[a4paper,10pt]{article}

\usepackage{a4wide}
\usepackage{amsmath}
\usepackage{amsthm}
\usepackage{amssymb}
\usepackage{graphicx}
\usepackage[algo2e]{algorithm2e}

\numberwithin{figure}{section}
\numberwithin{table}{section}

\newtheorem{thm}{Theorem}[section]
\newtheorem{lem}[thm]{Lemma}
\newtheorem{prop}[thm]{Proposition}
\newtheorem{cor}[thm]{Corollary}

\theoremstyle{definition}
\newtheorem{defi}[thm]{Definition}
\newtheorem{problem}{Problem}

\theoremstyle{remark}
\newtheorem{rem}[thm]{Remark}

\newcommand{\norm}[1]{\lVert#1\rVert}

\newcommand{\rr}{\mathbb{R}}
\newcommand{\nn}{\mathbb{N}}
\newcommand{\test}{\varphi}
\renewcommand{\P}{\mathcal{P}}
\newcommand{\C}{\mathcal{C}}

\newcommand{\U}{U_{\text{ad}}}
\newcommand{\J}{\mathcal{J}}
\newcommand{\Y}{\mathcal{Y}}

\title{The Quasi-Neutral Limit in Optimal Semiconductor Design}
\author{Ren\'e Pinnau, Claudia Totzeck, Oliver Tse}
\date{}

\begin{document}
\maketitle
\begin{abstract}
We study the quasi-neutral limit in an optimal semiconductor design problem constrained by a nonlinear, nonlocal Poisson equation modelling the drift diffusion equations in thermal equilibrium. While a broad knowledge on the asymptotic links between the different models in the semiconductor model hierarchy exists, there are so far no results on the corresponding optimization problems available. Using a variational approach we end up with a bi-level optimization problem, which is thoroughly analysed. Further, we exploit the concept of $\Gamma$-convergence to perform the quasi-neutral limit for the minima and minimizers. This justifies the construction of fast optimization algorithms based on the zero space charge approximation of the drift-diffusion model. The analytical results are underlined by numerical experiments confirming the feasibility of our approach. 
 \\[0.5cm] 
\textit{Keywords:} optimal semiconductor design; drift diffusion model; nonlinear nonlocal Poisson equation; optimal control; first-order necessary condition; $\Gamma$-convergence.
\\[1ex]
\textit{Mathematics Subject Classification (2010):} 35B40, 35J50, 35Q40, 49J20, 49K20
\end{abstract}

\section{Introduction}
Nowadays, semiconductor devices play a crucial role in our society due to the increasing use of technical equipment in which  more and more functionality is combined. The ongoing miniaturization in combination with less energy consumption and increased efficiency requires that the designs cycles for the upcoming device generations are shortened significantly. Hence, black-box optimization approaches like genetic algorithms or derivative-free optimization are not capable to keep pace with these demands \cite{kim10}. This insight lead to an increased attention of the electrical engineering community as well as from applied mathematicians within the last decades. Researchers focused on the optimal design of semiconductor devices based on tailored mathematical optimization techniques \cite{diebold96, khalil95, BSFWIK93semicond2, PinHin, BuPi03, cheng2011recovering, li2013geometric,Patent12}. In fact, several design questions were considered, such as increasing the current during on-state, decreasing the leakage current in the off-state or shrinking the size of the device \cite{BuPi03,BuPiWo08,drago2008optimal,HiPi00b}. 

Meanwhile, there is a good understanding of the mathematical questions concerning the underlying optimization methods for different semiconductor models, like the drift-diffusion or the energy transport model (for the specific models see also \cite{Moc83,MaRiSch90} and the references therein). Further, fast and reliable numerical algorithms were designed on basis of the special structure of the device models, which use adjoint information to provide the necessary derivative information \cite{BuPi03,HiPi07,BuPiWo08,MaPi12,drago2013semiconductor,BuPi09}. But not only the classical model hierarchy was used, also macroscopic quantum models, e.g., the quantum drift-diffusion model \cite{UnVo07,burger2014optimal} and the quantum Euler-Poisson model \cite{pinnau2014semi} were investigated. 

Special interest is in the design or identification  of the doping profile of the charged background ions, which is most important for the electrical behaviour of the semiconductor device \cite{FWIK92semicond1,BSFWIK93semicond2,HiPi00}. The specific structure of the semiconductor models stemming from the nonlinear coupling with the Poisson equation for the electrostatic potential allows it also to use the total space charge as a design variable, which lead to the construction of a fast and optimal design algorithm in the spirit of the well-known Gummel-iteration \cite{Gum64,BuPi03,BuPi09,Patent12}.

For the semiconductor model hierarchy, it is well known that all these models are linked by asymptotic limits, which were thoroughly investigated during the last decades (for an overview see \cite{Moc83,Mar86,MaRiSch90,Jun11} and the references therein). Especially, they were used for the derivation of approximate models and algebraic formulas describing the device behaviour, like current-voltage or capacity-voltage characteristics \cite{Sze}.

One particular limit is the quasi-neutral limit in the classical drift-diffusion model for small Debye length, which is exploited for the construction of analytical current-voltage characteristics \cite{MaRiSch90,Sze}. For the forward problem this limit is analytically well understood (see, e.g.~,\cite{MaRiSch90,Unterreiter,Unt94,neutral1,neutral2,neutral3,neutral4} and the references therein). For vanishing Debye length one obtains the so-called zero space charge approximation, which has been also used in the reconstruction of semiconductor doping profiles from a Laser-Beam-Induced Current Image (LBIC) in \cite{FangIto}. Also the fast optimization approach in \cite{BuPi03,BuPi09} suggests that this limit is of particular interest for optimal semiconductor design.  

Here we investigate, if it is reasonable to approximate the solution of the drift-diffusion (DD) model for small Debye length with the zero space charge solution on the whole domain during optimal design calculations for semiconductor device. This will significantly speed up the design calculations, since instead of the multiple solution of a nonlinear partial differential equation it just requires several solutions of an algebraic equation \cite{MaRiSch90}. The answer to this question is positive and underlined for the first time by analytical and numerical results for the full optimization problem. In particular, we consider the quasi-neutral limit for a  PDE constrained optimization problem governed by the DD model in thermal equilibrium \cite{Unterreiter,Unt94}. Using the concept of $\Gamma$-convergence we can perform the asymptotic limit and can even show the convergence of minima and minimizers \cite{Braides,DalMaso}. This gives an analytical foundation for the assumptions in \cite{FWIK92semicond1} and justifies the future usage of the space-mapping approach in optimal semiconductor design (compare also \cite{drago2008optimal,MaPi12}).

The mathematical challenges are on the one hand the rigorous analysis with reduced regularity assumptions for the forward problem, and on the other hand the non-convexity of the underlying optimization problem, which allows for the non-uniqueness of the minimizer. To tackle the first problem, we use the dual formulation of the variational approach in \cite{Unterreiter,Unt94}, which allows to formulate the optimal design problem as a bi-level optimization problem in the primal variable given by the electrostatic potential. This yields then an optimization problem constrained by a nonlinear, nonlocal Poisson equation (NNPE). The second challenge suggests that we cannot expect any rates for the asymptotic limit. Hence, we rely here on the weak concept of $\Gamma$-convergence.

The paper is organized as follows. In the remainder of this section we describe the model equations and the corresponding constrained optimization problems. In the next section, we provide a thorough analysis for the state equation given by the NNPE. Using a variational approach we show existence and uniqueness of the state, as well as a priori estimates necessary for the asymptotic limit. In Section 3 we investigate the design problem analytically. The quasi-neutral limit for the full optimal design problem is performed in Section 4, where we show the $\Gamma$-convergence of the minima and minimizers. In the last section we present reliable numerical algorithms for the solution of the NNPE as well as for the adjoint problem, which are used for the construction of a descent algorithm for the optimization problem. The presented numerical results underline the analysis in the previous sections. Finally, we give concluding remarks.

\subsection{The model equations and design problem}

The scaled drift-diffusion equations in thermal equilibrium  \cite{MarkovichRinghoferSchmeiser} are given in dimension $d=1,2 $ or 3 either by the coupled system
\begin{gather*}
 n\nabla V + \nabla n =0,\qquad
 -p\nabla V + \nabla p = 0\\
 -\lambda^2\Delta V = n - p - C
\end{gather*}
or equivalently, by the nonlinear, nonlocal Poisson equation (NNPE) on $\Omega\subset\mathbb{R}^d$ 
\begin{equation}\tag{$\text{\bf P}_\lambda$}\label{NPE}
 -\lambda^2 \Delta V = n(V) - p(V) - C,
\end{equation} 
where the unknown $V$ is the electrostatic potential, $C$ is a given doping profile (later the design variable), $\lambda$ is the scaled Debye length and the charge carriers are described by the densities of electrons and holes, respectively, defined by
\begin{align}\label{eq:dens}
n= n(V) = N\frac{e^{-V}}{\int e^{-V}\,dx}, \qquad p=p(V) = P\frac{e^V}{\int e^V\,dx},
\end{align}
for some given $N,P\ge 0$. The doping profile may be split into its positive and negative part $C^+$ and $C^-$ describing the distributions of positively and negatively charged background ions. Then, it holds $C = C^+ + C^-$ and we can define the positive and negative total charges
\begin{align}\label{eq:NP}
 N = \delta^2 + \int_\Omega C^+ \,dx,\qquad P= \delta^2 -\int_\Omega C^- \,dx,
\end{align}
respectively, where $\delta^2>0$ is the so-called scaled intrinsic density of the semiconductor \cite{MaRiSch90,Unterreiter}. 

Since the device is in thermal equilibrium, we supplement \eqref{NPE} with homogeneous Neumann boundary conditions $\nu \cdot \nabla V = 0$ on $\partial\Omega$, where $\nu$ is the outward unit normal along $\partial\Omega$. Notice that the boundary condition is consistent with \eqref{NPE}. Indeed, we have
\[
 -\lambda^2\int_{\partial\Omega}\nu\cdot\nabla V\,ds = -\lambda^2\int_\Omega \Delta V\,dx = N - P - \int C\,dx = 0,
\]
which is the global space charge neutrality. For reasons of uniqueness, we further impose the integral constraint $\int_\Omega V\,dx=0$ (compare \cite{Unterreiter}).

In the quasi-neutral limit $\lambda \to 0 $, \eqref{NPE} reduces to the algebraic equation
\begin{align}\tag{$\text{\bf P}_0$}\label{zero}
 B(V) := -n(V) + p(V) + C = 0.
\end{align}

\begin{rem}
Notice that $B(k)=(P-N)/|\Omega| + C$ for any constant $k\in\mathbb{R}$ and $\int_\Omega B(V)\,dx=0$. Therefore, imposing the integral constraint $\int_\Omega V\,dx=0$ is justified.
\end{rem}

Unless otherwise stated, we make the following assumptions throughout the manuscript.
\begin{enumerate} 
 \item[(A1)] $\Omega \subset \mathbb{R}^d$, $d=1, 2$ or $3$ is a bounded Lipschitz domain.
 \item[(A2)] The doping profile satisfies $C \in H^1(\Omega)$.
\end{enumerate}
We use the abbreviation $\norm{\cdot}_p = \norm{\cdot}_{L^p(\Omega)}$.

\begin{rem}
Existence and uniqueness results for the nonlinear Poisson problem without the nonlocal terms and with different boundary conditions can be found in \cite{MaRiSch90} and the references therein. Further, a dual variational approach was used in \cite{Unterreiter,Unt94} to incorporate the Neumann boundary conditions as well as the constraints \eqref{eq:dens}. In both approaches, the analysis requires $C \in L^\infty(\Omega)$, which would be too strict for our requirements. Hence, we use instead assumption (A2) and the dual, nonlocal formulation, which will yield better estimates necessary for our asymptotic analysis. 
\end{rem}

The optimal design approach based on the fast Gummel iteration considered in \cite{BuPi03,BuPi09} suggests that the overall device behaviour is determined by the total charge $n(V)-p(V)-C$.
Hence, we consider in the following a design problem described by a cost functional of tracking-type given by
\[
 J(V,C) = \frac{1}{2} \norm{n(V) - n_d}_2^2 + \frac{1}{2} \norm{p(V) - p_d}_2^2 + \frac{\sigma}{2} \norm{\nabla (C - C_{\text{ref}})}_2^2,
\]
where $n_d$ and $p_d$ are desired electron and hole distributions, respectively. Here, $C_{\text{ref}}\in H^1(\Omega)$ is a given reference doping profile (see also \cite{HiPi00}) and $\sigma>0$ is a parameter, which allows to adjust the deviation from this reference profile \cite{Pinnau}. This allows to adjust the negative and positive charges separately by changing the doping profile $C$. Note, that the cost term involves the $H^1$-seminorm, which is essential for asymptotic limit later on.

Now we are in the position to formulate the optimization problems under consideration:
\begin{align}\tag{$\text{\bf OP}_\lambda$}\label{OP}
\text{min\, } J(V,C)\quad\text{subject to\, \eqref{NPE}}.
\end{align}
The asymptotic results for the hierarchy of semiconductor models suggest that there should also be an asymptotic link between the optimization problems \eqref{OP} for $\lambda>0$ and $\lambda =0$. In particular, we are interested in the convergence of minimizing pairs $\{(V_\lambda,C_\lambda)\}$ towards $(V_0,C_0)$, as well as in the convergence of $J(V_\lambda,C_\lambda)$ towards $J(V_0,C_0)$. For small $\lambda$ one can then use the reduced, algebraic model for the optimization, which will significantly speed-up the optimization process.

\section{The Nonlocal Nonlinear Poisson Problem}\label{sec:state}

In this section we provide a priori estimates for the solutions of \eqref{NPE}, $\lambda\ge 0$, and thereafter show existence, as well as uniqueness of solutions under assumptions (A1) and (A2).

Consider the NNPE \eqref{NPE} given by
 \[
  -\lambda^2\Delta V + B(V) = 0 \quad\text{in}\;\;\Omega,\qquad \nu\cdot \nabla V=0\quad\text{on}\;\;\partial\Omega,
 \]
with integral constraint $\int_\Omega V\,dx =0$, where as before
 \[
  B(v):= -N\frac{e^{-v}}{\int e^{-v}\,dx} + P\frac{e^v}{\int e^v \,dx} + C.
 \]
 For the analysis we reformulate \eqref{NPE} using a variational approach: Consider the functional
 \[
  b(v) := N\ln\left( \frac{1}{|\Omega|}\int_\Omega e^{-v}\,dx \right) + P\ln\left( \frac{1}{|\Omega|}\int_\Omega e^{v}\,dx \right) + \int_\Omega C\, v\,dx.
 \]
One readily sees that the first variation of the functional $b$ formally gives the operator $B$. Indeed, taking the variation of $b$ at some $v\in \C_0^\infty(\Omega)$, we obtain
 \[
  \frac{\delta}{\delta v}b(v)[h] = \int_\Omega B(v) h\,dx\quad\text{for all $h\in\C_0^\infty(\Omega)$}.
 \]
Again, since $b(k)=0$ for any constant $k\in\rr$, it suffices to consider functions with $\int v\,dx=0$. 

Henceforth, we set
$
 \P = \{ v\in L^1(\Omega)\,|\, \int_\Omega v\,dx=0 \},
$
and consider an extension of $b$ given by 
 \[
 \bar b_\lambda\colon L^1(\Omega)\to \rr\cup\{+\infty\}:\quad  \bar b_\lambda(v) = \begin{cases}
   \lambda^2\|\nabla v\|_{2}^2 + b(v) & v\in \Sigma \\
   +\infty & \text{else}
  \end{cases},
 \]
for any $\lambda\ge 0$, with $\Sigma := \{ v\in \P\;|\; b(v)<+\infty,\; \nabla v\in L^2(\Omega)\}$. 

\begin{rem}
Note, that the first variation of $\bar b_\lambda$ for $v\in\Sigma$ is in fact the weak formulation of nonlinear Poisson equation \eqref{NPE} (cf.~\eqref{eq:weakNPE}).
\end{rem}

The main result of this section is summarized in the following theorem.

\begin{thm}\label{thm:main_state}
 Let $C\in H^1(\Omega)$. For each $\lambda\ge 0$, there exists a unique minimizer $V_\lambda$ of the problem
 \begin{align}\tag{$\text{\bf MP}_\lambda$}\label{min_lambda}
  \min\nolimits_{v\in L^1(\Omega)} \bar b_\lambda(v),
 \end{align}
and consequently a unique solution of the nonlinear Poisson equation \eqref{NPE} with
 \[
 V_\lambda\in \Sigma\cap L^\infty(\Omega),\; \lambda>0, \quad \text{ and } \quad V_0\in\Sigma.
 \]
 Furthermore, the sequence $(V_\lambda)\subset\Sigma$ satisfies the following convergences
 \begin{align*}
  V_\lambda\rightharpoonup V_0\quad\text{in\, $H^1(\Omega)$},&\qquad V_\lambda\to V_0\quad\text{in\, $L^4(\Omega)$},\\
  n(V_\lambda) \to n(V_0),&\qquad p(V_\lambda)\to p(V_0)\quad\text{in\, $L^2(\Omega)$},
 \end{align*}
 where $n$ and $p$ are as given in \eqref{eq:dens}.
\end{thm}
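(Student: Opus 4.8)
\emph{Strategy.} I would prove the case $\lambda>0$ by the direct method of the calculus of variations applied to $\bar b_\lambda$, treat the degenerate case $\lambda=0$ by exploiting the explicit structure of \eqref{zero}, and then tie the two regimes together via a uniform-in-$\lambda$ a priori estimate coming from the monotonicity of $B$. First I record that $\bar b_\lambda$ is convex on $L^1(\Omega)$ (the maps $v\mapsto\ln\int_\Omega e^{\pm v}\,dx$ are convex by H\"older's inequality and $v\mapsto\norm{\nabla v}_2^2$ is convex) and \emph{strictly} convex on $\Sigma$, since the second variation of $v\mapsto\ln\int e^{v}$ along $h$ is the variance of $h$ under the probability density $e^{v}/\!\int e^{v}$, which is positive unless $h$ is constant, and the only constant in $\P$ is $0$. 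Strict convexity gives uniqueness of a minimiser once existence is known, and, $\bar b_\lambda$ being convex, every weak solution of \eqref{NPE} is a critical point and hence the global minimiser, so uniqueness of the state follows from that of the minimiser of \eqref{min_lambda}. For $\lambda=0$ there is no regularising term and $\bar b_0=b$ controls nothing of $\nabla v$, so I would \emph{not} use the direct method; instead I solve \eqref{zero} by hand. Writing $a:=N/\!\int_\Omega e^{-V}\,dx>0$ and $b:=P/\!\int_\Omega e^{V}\,dx>0$ (both positive because $N,P\ge\delta^2>0$), the identity $B(V)=0$ reads $-ae^{-V}+be^{V}+C=0$, i.e.\ $e^{V}=\bigl(-C+\sqrt{C^2+4ab}\bigr)/(2b)$; the self-consistency relations $\int_\Omega e^{-V}\,dx=N/a$ and $\int_\Omega e^{V}\,dx=P/b$ together with $\int_\Omega V\,dx=0$ determine $a,b$ uniquely — using $\int_\Omega C\,dx=N-P$ they reduce to $\int_\Omega\sqrt{C^2+4ab}\,dx=N+P$ (which fixes $ab$, the left side increasing strictly in $ab$ from $\int_\Omega|C|\,dx=N+P-2\delta^2<N+P$ to $+\infty$) and to the zero-mean constraint (which then fixes $b$). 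Since $t\mapsto\ln\bigl(-t+\sqrt{t^2+4ab}\bigr)$ has derivative $-(t^2+4ab)^{-1/2}$ and is therefore globally Lipschitz, the resulting $V_0$ is a Lipschitz function of $C\in H^1(\Omega)$, whence $V_0\in H^1(\Omega)$, so $V_0\in\Sigma$ with $B(V_0)=0$; being a critical point of the strictly convex $b$ on the convex set $\Sigma$, this $V_0$ is in fact the unique minimiser of $b$ there, so $b\ge b(V_0)>-\infty$ on $\Sigma$. (Equivalently, $V_0$ is the multiplier for the constraint in the dual entropy problem $\min\{\int n\ln(|\Omega|n/N)+\int p\ln(|\Omega|p/P):n,p\ge0,\ n-p=C,\ \int_\Omega n\,dx=N\}$.)

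\emph{The case $\lambda>0$.} With $b$ bounded below the direct method runs: a minimising sequence $(v_k)\subset\Sigma$ for $\bar b_\lambda$ satisfies $\bar b_\lambda(v_k)\le\bar b_\lambda(0)=0$, hence $\lambda^2\norm{\nabla v_k}_2^2\le -b(v_k)\le -b(V_0)$ and, by Poincar\'e--Wirtinger, is bounded in $H^1(\Omega)$; extracting $v_k\rightharpoonup V_\lambda$ in $H^1$, $v_k\to V_\lambda$ in $L^2$ and a.e., weak lower semicontinuity of $\norm{\nabla\cdot}_2^2$ and Fatou for the exponential terms (each $\int_\Omega e^{\pm v_k}\,dx$ stays bounded because the two logarithmic terms of $b$ are non-negative on $\P$) give $\bar b_\lambda(V_\lambda)\le\liminf\bar b_\lambda(v_k)$ and $V_\lambda\in\Sigma$. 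Thus $V_\lambda$ is the unique minimiser, its vanishing first variation is the weak form of \eqref{NPE}, and a Stampacchia/De~Giorgi truncation argument — testing with $(V_\lambda-k)^{\pm}$ in $-\lambda^2\Delta V_\lambda=n(V_\lambda)-p(V_\lambda)-C$, using $n(V_\lambda)\le(N/|\Omega|)e^{-k}$ on $\{V_\lambda>k\}$ (from $\int e^{-V_\lambda}\,dx\ge|\Omega|$ by Jensen) and $C\in H^1(\Omega)\subset L^6(\Omega)$ — gives $V_\lambda\in L^\infty(\Omega)$ for each fixed $\lambda>0$.

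\emph{Uniform estimate and the limit $\lambda\to0$.} The decisive bound is obtained by testing the weak form for $V_\lambda$ with $V_\lambda-V_0\in H^1(\Omega)\cap\P$ and subtracting the identity $\int B(V_0)(V_\lambda-V_0)\,dx=0$: monotonicity of $B$ (i.e.\ $\int(B(v)-B(w))(v-w)\,dx\ge0$, again convexity of $b$) yields $\lambda^2\int\nabla V_\lambda\cdot\nabla(V_\lambda-V_0)\,dx\le0$, hence $\norm{\nabla V_\lambda}_2\le\norm{\nabla V_0}_2$ uniformly in $\lambda\in(0,1]$, so $(V_\lambda)$ is bounded in $H^1(\Omega)$. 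I would then establish $\sup_{0<\lambda\le1}\int_\Omega e^{\pm qV_\lambda}\,dx<\infty$ for every finite $q$: for $d\le2$ this is immediate from the Moser--Trudinger inequality and the uniform $H^1$-bound; for $d=3$ one bootstraps, testing $-\lambda^2\Delta V_\lambda=n(V_\lambda)-p(V_\lambda)-C$ against $e^{\pm\alpha V_\lambda}$ and discarding the non-negative term $\lambda^2\alpha\int|\nabla V_\lambda|^2e^{\pm\alpha V_\lambda}$ (so the estimates are $\lambda$-free), starting from the bound on $\int e^{\pm V_\lambda}\,dx$ implied by $\bar b_\lambda(V_\lambda)\le\bar b_\lambda(V_0)\le\bar b_1(V_0)$ and using $H^1\hookrightarrow L^6$. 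Consequently, along a subsequence, $V_\lambda\rightharpoonup\bar V$ in $H^1(\Omega)$, $V_\lambda\to\bar V$ in $L^4(\Omega)$ and a.e.\ (Rellich), $e^{\pm V_\lambda}\to e^{\pm\bar V}$ in every $L^p$ and $\int e^{\pm V_\lambda}\,dx\to\int e^{\pm\bar V}\,dx$ (Vitali, by the uniform integrability), whence $n(V_\lambda)\to n(\bar V)$ and $p(V_\lambda)\to p(\bar V)$ in $L^2(\Omega)$. Passing to the limit in the weak form, the $\lambda^2$-term vanishes and $\int B(\bar V)\test\,dx=0$ for all $\test\in H^1(\Omega)\cap\P$, so $B(\bar V)=0$; since $\bar V\in\Sigma\cap\P$, uniqueness for \eqref{zero} forces $\bar V=V_0$, and as every subsequence has a further subsequence with the same limit, the full family converges, which is the claim.

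\emph{Main obstacle.} The genuinely delicate point is the degenerate endpoint $\lambda=0$: $\bar b_0$ carries no gradient term, so the direct method provides no compactness and one must exploit the algebraic (equivalently, dual entropic) structure of \eqref{zero}; the crucial feature is that $V_0$ depends on $C$ through a globally Lipschitz nonlinearity, which is exactly what keeps $V_0\in H^1$ although $C\in H^1(\Omega)$ may be unbounded for $d\ge2$. Everything downstream rests on the single inequality $\norm{\nabla V_\lambda}_2\le\norm{\nabla V_0}_2$, whose proof nevertheless presupposes $V_0$ already in hand; and for $d=3$ the $\lambda$-uniform exponential integrability (which replaces the Moser--Trudinger shortcut available for $d\le2$) is the remaining nontrivial ingredient, needed to upgrade the a.e.\ convergence of $V_\lambda$ to the $L^2$-convergence of $n(V_\lambda)$ and $p(V_\lambda)$.
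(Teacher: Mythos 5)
Your proposal is correct in substance and reaches every assertion of the theorem, but it follows a genuinely different route from the paper's. The paper runs the direct method on the auxiliary constrained functionals $\bar b^\sigma_\lambda$ of \eqref{OPaux} (coercivity, weak lower semicontinuity and strict convexity in Lemmas~\ref{lem:main_coercive}--\ref{lem:main_convex}), obtains existence for \emph{all} $\lambda\ge 0$ in one stroke, and then removes the artificial constraint $\norm{\nabla v}_2\le\sigma$ using the a priori bounds of Lemmas~\ref{lem:apriori_zero} and \ref{lem:main_regular}; you instead treat $\lambda>0$ by the unconstrained direct method (which works, as Remark~2.7 of the paper also observes) and, more interestingly, you settle $\lambda=0$ \emph{constructively}, solving the quadratic for $e^{V_0}$ and fixing $(\alpha,\beta)$ from $\int_\Omega\sqrt{C^2+4\alpha\beta}\,dx=N+P$ together with the zero-mean normalization -- this is essentially the content of Lemma~\ref{lem:apriori_zero}, but upgraded from an a priori estimate into an existence proof, which is a clean simplification and also yields the lower bound $b\ge b(V_0)$ that your coercivity argument needs. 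Your uniform bound $\norm{\nabla V_\lambda}_2\le\norm{\nabla V_0}_2$ via testing with $V_\lambda-V_0$ and monotonicity of $B$ is the Euler--Lagrange counterpart of the paper's comparison chain \eqref{eq:sequence} in Lemma~\ref{lem:main_convergence}; both are fine. For the $L^2$-convergence of $n(V_\lambda),p(V_\lambda)$ you invoke $\lambda$-uniform integrability of $e^{\pm qV_\lambda}$ for all $q$ (Moser--Trudinger for $d\le2$, an exponential-test bootstrap for $d=3$) plus Vitali, whereas the paper gets by with much less: a pointwise convexity inequality, the $L^4$-bound on $e^{-V_0}$ coming from Remark~\ref{rem:p_regularity}, and the strong $L^4$-convergence $V_\lambda\to V_0$. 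Your route costs more work -- the $d=3$ bootstrap is only sketched, though it does close (test with $e^{\pm\alpha V_\lambda}$, use $C\in L^6$, interpolate the exponential orders, and note $\alpha_\lambda,\beta_\lambda$ are bounded above and below uniformly thanks to Corollary~\ref{cor:regular} and the uniform gradient bound) -- but it is more robust and in fact gives two-sided control of $e^{-V_\lambda}-e^{-V_0}$, which the paper's one-sided convexity estimate glosses over. One detail you should tighten: in your Stampacchia argument for $V_\lambda\in L^\infty$ you test with $(V_\lambda-k)^{\pm}$, but under Neumann boundary conditions these truncations satisfy no Poincar\'e inequality; the paper circumvents this by using the mean-corrected test functions $\psi^{\pm}_{\lambda,k}\in\P\cap H^1(\Omega)$ together with monotonicity of $B$, and you would need the same (or a measure-theoretic variant of Poincar\'e on $\{|A_k^\pm|\le|\Omega|/2\}$) to make the iteration run.
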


\begin{rem}
The main contribution of the result above is the well-posedness of \eqref{NPE} also for unbounded doping profiles $C\in H^1(\Omega)$, which allows for more general control functions in the optimal control problem discussed in the following sections. This clearly extends the results in \cite{Unterreiter}. Furthermore, we are able to deduce $L^\infty$-estimates for the case $\lambda>0$ and $C\in L^p(\Omega)$, $p>2$, which was up to the authors knowledge also not known before in this setup.
\end{rem}

We outline the idea of the proof: To prove the first part of the theorem, we invoke a standard technique of variational calculus \cite{struwe} on a family of auxiliary problems. Namely, we consider the minimization of the auxiliary functional given by
 \begin{align}\tag{$\text{\bf MPa}_\lambda$}\label{OPaux}
  \bar b_\lambda^\sigma\colon L^1(\Omega)\to \rr\cup\{+\infty\}:\quad \bar b_\lambda^\sigma(v) = \begin{cases}
   \lambda^2\|\nabla v\|_{2}^2 + b(v) & v\in \Sigma_\sigma \\
   +\infty & \text{else}
  \end{cases},
 \end{align}
 for $\lambda,\sigma\ge 0$, with $\Sigma_\sigma := \big\{ v\in \Sigma\;|\; \|\nabla v\|_{2} \le \sigma \big\}\subset\Sigma$.

 To this end, we will show in Subsection~\ref{subsec:properties} that $\bar b_\lambda^\sigma$ is strictly convex, coercive and weakly lower semicontinous on $L^1(\Omega)$. We then derive necessary a priori estimates for weak solutions of \eqref{NPE}, $\lambda\ge 0$ in Subsection~\ref{subsec:apriori}, which will allow us to obtain unique minimizers for $\bar b_\lambda$.

\subsection{Properties of the Functionals}\label{subsec:properties}

\begin{lem}\label{lem:main_coercive}
 The functionals $\bar b_\lambda^\sigma$  are coercive in $L^1(\Omega)$ for $\lambda\ge 0$, $\sigma> 0$.
\end{lem}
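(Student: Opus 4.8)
The plan is to read coercivity of $\bar b_\lambda^\sigma$ on $L^1(\Omega)$ in the usual variational sense: $\bar b_\lambda^\sigma$ is bounded below and each sublevel set $\{v\in L^1(\Omega):\bar b_\lambda^\sigma(v)\le M\}$ is bounded in $L^1(\Omega)$ --- equivalently, $\bar b_\lambda^\sigma(v_n)\to+\infty$ whenever $\norm{v_n}_1\to\infty$. Since $\bar b_\lambda^\sigma\equiv+\infty$ on $L^1(\Omega)\setminus\Sigma_\sigma$, every sublevel set is contained in the effective domain $\Sigma_\sigma$, so the whole claim reduces to two estimates on $\Sigma_\sigma$: a uniform lower bound for $b$ (hence for $\bar b_\lambda^\sigma$) and a uniform bound for $\norm{v}_1$. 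The two ingredients are Jensen's inequality, to control the two logarithmic mean-exponential terms of $b$, and the Poincaré--Wirtinger inequality on the bounded Lipschitz domain $\Omega$ --- with Poincaré constant $C_P=C_P(\Omega)$ --- to turn the gradient bound $\norm{\nabla v}_2\le\sigma$ into an $L^2$- and hence $L^1$-bound, exploiting the integral constraint $\int_\Omega v\,dx=0$ encoded in $\P$.

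Carrying this out: fix $v\in\Sigma_\sigma$. Applying Jensen's inequality to the convex map $\exp$ against the probability measure $|\Omega|^{-1}dx$ and using $\int_\Omega v\,dx=0$ gives
\[
 \frac1{|\Omega|}\int_\Omega e^{-v}\,dx\ \ge\ \exp\!\Big(-\frac1{|\Omega|}\int_\Omega v\,dx\Big)=1,\qquad
 \frac1{|\Omega|}\int_\Omega e^{v}\,dx\ \ge\ 1,
\]
so both logarithmic terms of $b(v)$ are $\ge 0$ (since $N,P\ge 0$). For the linear term, Cauchy--Schwarz together with Poincaré--Wirtinger yields
\[
 \Big|\int_\Omega C v\,dx\Big|\le\norm{C}_2\norm{v}_2\le C_P\norm{C}_2\norm{\nabla v}_2\le C_P\sigma\norm{C}_2,
\]
whence $b(v)\ge -C_P\sigma\norm{C}_2$ and therefore $\bar b_\lambda^\sigma(v)=\lambda^2\norm{\nabla v}_2^2+b(v)\ge -C_P\sigma\norm{C}_2$ for every $v\in L^1(\Omega)$ and every $\lambda\ge 0$; in particular $\inf\bar b_\lambda^\sigma>-\infty$. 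Moreover, the same Poincaré estimate gives, for $v\in\Sigma_\sigma$,
\[
 \norm{v}_{H^1(\Omega)}^2=\norm{v}_2^2+\norm{\nabla v}_2^2\le (C_P^2+1)\sigma^2,\qquad
 \norm{v}_1\le |\Omega|^{1/2}\norm{v}_2\le |\Omega|^{1/2}C_P\sigma,
\]
so $\Sigma_\sigma$ --- and a fortiori every sublevel set of $\bar b_\lambda^\sigma$ --- is a bounded subset of $H^1(\Omega)$ and of $L^1(\Omega)$. This is precisely coercivity, and all bounds are uniform in $\lambda\ge 0$.

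I do not anticipate a genuine obstacle in this lemma; the most delicate point is merely that assumption (A2) only supplies $C\in L^2(\Omega)$, which is why the term $\int_\Omega Cv\,dx$ must be estimated by Cauchy--Schwarz in $L^2$ rather than by the coarser $\norm{C}_\infty\norm{v}_1$ bound used in \cite{Unterreiter}, and that the Jensen lower bounds equal $1$ only because of the mean-zero constraint defining $\P$. Finally, it is worth noting that the gradient constraint $\sigma$ in $\Sigma_\sigma$ confines the effective domain not just to a bounded set but, via Rellich's theorem, to a weakly (indeed $L^1$-strongly) precompact subset of $H^1(\Omega)$; this compactness, combined with the strict convexity and lower semicontinuity to be established in the remainder of Subsection~\ref{subsec:properties}, is what will let the direct method deliver the unique minimizer of $\bar b_\lambda^\sigma$ and, after the a priori estimates of Subsection~\ref{subsec:apriori}, of $\bar b_\lambda$ itself.
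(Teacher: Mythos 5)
Your argument is correct and does prove the lemma as stated, but it takes a genuinely different route from the paper. You observe that the effective domain $\Sigma_\sigma$ is itself bounded in $L^1(\Omega)$ (mean-zero plus $\|\nabla v\|_2\le\sigma$ plus the generalized Poincar\'e inequality), so every sublevel set is automatically $L^1$-bounded, and you control the functional from below by the ``global'' Jensen estimate $\tfrac1{|\Omega|}\int_\Omega e^{\pm v}\,dx\ge 1$ together with $|\int_\Omega Cv\,dx|\le c_p\sigma\|C\|_2$; coercivity then holds essentially because $\bar b_\lambda^\sigma\equiv+\infty$ outside a bounded set. The paper instead applies Jensen's inequality separately on the supports of $v^+$ and $v^-$, which produces the quantitative lower bound \eqref{lem:main_coercive:eq:co}, i.e.\ growth at least like $\lambda^2\|\nabla v\|_2^2+\tfrac{\delta^2}{|\Omega|}\|v\|_1$ up to a constant depending on $\sigma\|C\|_2$. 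What the paper's finer estimate buys, and your cruder bound does not, are the by-products harvested immediately afterwards: the Remark following the lemma (for $\lambda>0$, Young's inequality applied to \eqref{lem:main_coercive:eq:co} removes the artificial constraint $\|\nabla v\|_2\le\sigma$ and gives coercivity of $\bar b_\lambda$ in $L^1$ and even $H^1$) and Corollary~\ref{cor:regular} (the exponential-integrability bound $\int_\Omega e^{\pm v}dx\le|\Omega|\exp\bigl(\tfrac1{\delta^2}(K+(c_p^++c_p^-)\|C\|_2\|\nabla v\|_2)\bigr)$, which is later essential for the dominated-convergence step in Lemma~\ref{lem:main_convergence}). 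So your proof is admissible for the statement itself, but if one follows your route those two results need independent proofs, whereas in the paper they fall out of the coercivity argument. Two minor points of care: the passage from $v\in L^1$ with $\nabla v\in L^2$ and $\int_\Omega v\,dx=0$ to $v\in H^1$ with $\|v\|_2\le c_p\|\nabla v\|_2$ is the generalized Poincar\'e inequality on a Lipschitz domain (worth citing explicitly, as the paper also uses it tacitly), and (A2) actually gives $C\in H^1(\Omega)$, of which only the $L^2$-bound is needed here.
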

\begin{proof}
Let $v\in \Sigma_\sigma$ and $v\ne 0$. Throughout the proof, we denote $\Omega^\pm := \text{supp}(v^\pm)$ to be the support of $v^+=\max\{0,v\}$ and $v^-=\min\{v,0\}$ respectively. Since, $v\ne 0$, we have $\Omega^\pm\ne\emptyset$, simply due to the integral constraint $\int_\Omega v\,dx=0$. Furthermore, we set $u_{A} = \frac{1}{|A|}\int_{A} u\,dx$ for any measurable set $A\subset\Omega$.

 We begin with an elementary result due to Jensen's inequality. Since
 \[
  \frac{1}{|\Omega|}\int_\Omega e^{\pm v}dx \ge \frac{1}{|\Omega|}\int_{\Omega^\pm} e^{\pm v^\pm}dx = \frac{|\Omega^\pm|}{|\Omega|}\frac{1}{|\Omega^\pm|}\int_{\Omega^\pm} e^{\pm v^\pm}dx,
 \]
 we obtain 
 \begin{align*}
  \ln\left( \frac{1}{|\Omega|}\int_\Omega e^{\pm v}dx \right) &\ge \ln\left( \frac{1}{|\Omega^\pm|}\int_{\Omega^\pm} e^{\pm v^\pm}dx \right) + \ln(|\Omega^\pm|/|\Omega|)\\
  &\ge \frac{1}{|\Omega^\pm|}\int_{\Omega^\pm} |v^\pm|\,dx + \ln(|\Omega^\pm|/|\Omega|) =: |v^\pm|_{\Omega^\pm} + \eta^\pm
 \end{align*}
 simply from the monotonicity of the logarithm and Jensen's inequality for concave functions. Consequently, we have
 \[
  N\ln\left( \frac{1}{|\Omega|}\int_\Omega e^{-v}\,dx \right) \ge N \left( |v^-|_{\Omega^-} + \eta^- \right) = \int_\Omega C^+ \big(|v^-|_{\Omega^-} \big)dx + \delta^2 |v^-|_{\Omega^-} + N\eta^-.
 \]
 Similarly, we obtain for the other term
 \[
  P\ln\left( \frac{1}{|\Omega|}\int_\Omega e^{v}\,dx \right) \ge \int_\Omega |C^-| \big( |v^+|_{\Omega^+}\big)dx + \delta^2 |v^+|_{\Omega^+} +P\eta^+.
 \]
 On the other hand, we have for the linear term
 \[
  \int_\Omega Cv\,dx \ge - \int_{\Omega^+} |C^-|v^+dx -\int_{\Omega^-} C^+|v^-|\,dx.
 \]
 Putting these inequalities together yields
 \begin{align*}
   \bar b_\lambda^\sigma(v) &\ge \lambda^2\|\nabla v\|_{2}^2 + \frac{\delta^2}{|\Omega|}\|v\|_{1} + (N\eta^- + P\eta^+)\\
   &\hspace{5em} - \int_{\Omega^-} C^+ \Big[|v^-| - |v^-|_{\Omega^-} \Big]dx - \int_{\Omega^+} |C^-| \Big[|v^+| - |v^+|_{\Omega^+}\Big]dx.
 \end{align*}
 To estimate the last two terms, we use the Poincar\'e inequality, i.e.,
 \[
  \left\| u - u_{A} \right\|_{L^2(A)} \le c_p\|\nabla u\|_{L^2(A)},
 \]
 with $u\in\{|v^-|,|v^+|\}$ and $A\in\{ \Omega^-,\Omega^+ \}$ respectively. 
  
 Since $v\in \Sigma_\sigma$, $\|\nabla v\|_{2}\le \sigma$, we finally obtain
 \begin{align}\label{lem:main_coercive:eq:co}
  \bar b_\lambda^\sigma(v) \ge \lambda^2\|\nabla v\|_{2}^2 + \frac{\delta^2}{|\Omega|}\|v\|_{1}  - \sigma(c_p^+ + c_p^-)\|C\|_{2} + (N\eta^- + P \eta^+),
 \end{align}
 which yields the coercivity in $L^1(\Omega)$ and thereby concluding the proof.
\end{proof}

\begin{rem}
 As a matter of fact, in the case $\lambda>0$, coercivity of the functional $\bar b_\lambda$ may be obtained directly. Indeed, applying Young's inequality on \eqref{lem:main_coercive:eq:co} gives
 \[
  \bar b_\lambda^\sigma(v) \ge (\lambda^2-\kappa)\|\nabla v\|_{2}^2 + \frac{\delta^2}{|\Omega|}\|v\|_{1}  - \frac{c_p^+ + c_p^-}{\kappa}\|C\|_{2}^2 + (N\eta^- +P\eta^+),
 \]
 for any $\kappa>0$. Choosing $\kappa<\lambda^2$ provides the coercivity in $L^1(\Omega)$, and in fact, also in $H^1(\Omega)$. Therefore, the restriction $\|\nabla V\|_{2}\le \sigma$ is superfluous in this case.
\end{rem}

A direct consequence of the proof of Lemma~\ref{lem:main_coercive} is the following result. Its proof is a slight modification of the arguments used in Lemma~\ref{lem:main_coercive}, which we therefore omit.

\begin{cor}\label{cor:regular}
 If $v\in\Sigma$ satisfies $b(v)\le M$ for some $M>0$, then there exists a constant $K>0$, depending only on $M$ and $|\Omega|$ such that
 \[
  \int_\Omega e^{\pm v}dx \le |\Omega|\exp\left(\frac{1}{\delta^2}\Big(K + (c_p^+ + c_p^-)\|C\|_{2}\|\nabla v\|_{2}\Big)\right).
 \]
 In particular, $n, p$ and $e^{|v|}$ are elements of $L^1(\Omega)$.
\end{cor}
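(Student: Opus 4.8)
The plan is to revisit the chain of estimates in the proof of Lemma~\ref{lem:main_coercive}, but, rather than extracting from it a lower bound for $b(v)$ in terms of $\|v\|_{1}$ (which there produced coercivity), to solve the \emph{same} inequality for the nonlocal logarithmic terms. Starting from $b(v)\le M$, I would write
\[
 N\ln\!\left(\tfrac{1}{|\Omega|}\int_\Omega e^{-v}\,dx\right)\;\le\; M - P\ln\!\left(\tfrac{1}{|\Omega|}\int_\Omega e^{v}\,dx\right) - \int_\Omega Cv\,dx,
\]
and then use two facts. Since $\int_\Omega v\,dx=0$, Jensen's inequality for the convex function $\exp$ against the probability measure $|\Omega|^{-1}dx$ gives $\tfrac{1}{|\Omega|}\int_\Omega e^{\pm v}\,dx\ge 1$, so both logarithms are nonnegative and the middle term on the right-hand side may simply be discarded. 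Also, again using $\int_\Omega v\,dx=0$, the Poincar\'e inequality controls the linear term by $\bigl|\int_\Omega Cv\,dx\bigr|\le (c_p^{+}+c_p^{-})\|C\|_{2}\|\nabla v\|_{2}$ — precisely the kind of estimate already carried out in Lemma~\ref{lem:main_coercive} via the splitting $\Omega=\Omega^{+}\cup\Omega^{-}$. Together these give
\[
 N\ln\!\left(\tfrac{1}{|\Omega|}\int_\Omega e^{-v}\,dx\right)\;\le\; M + (c_p^{+}+c_p^{-})\|C\|_{2}\|\nabla v\|_{2}.
\]

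The concluding step is the elementary observation that $N=\delta^{2}+\int_\Omega C^{+}\,dx\ge\delta^{2}$ and, symmetrically, $P=\delta^{2}-\int_\Omega C^{-}\,dx\ge\delta^{2}$. Since the logarithm on the left is nonnegative, $\delta^{2}\ln(\cdot)\le N\ln(\cdot)$; dividing the last display by $\delta^{2}$ and exponentiating yields the claimed bound for $\int_\Omega e^{-v}\,dx$ with $K:=M$, which depends only on $M$. The bound for $\int_\Omega e^{v}\,dx$ follows by the mirror-image argument: solve the inequality for $P\ln(\cdot)$, drop the nonnegative $N\ln(\cdot)$-term, and use $P\ge\delta^{2}$. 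For the last assertion, finiteness of $\int_\Omega e^{\pm v}\,dx$ just means $e^{-v},e^{v}\in L^{1}(\Omega)$; then, as $n(v)=N\,e^{-v}/\big(\int_\Omega e^{-v}\,dx\big)$ and $p(v)=P\,e^{v}/\big(\int_\Omega e^{v}\,dx\big)$ are positive constant multiples of $e^{-v}$ and $e^{v}$ (with $\int_\Omega n(v)\,dx=N$ and $\int_\Omega p(v)\,dx=P$), they lie in $L^{1}(\Omega)$, and $e^{|v|}\le e^{v}+e^{-v}$ pointwise gives $e^{|v|}\in L^{1}(\Omega)$.

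I do not anticipate a real obstacle: the substantive work — the Jensen-type lower bounds for the nonlocal logarithms and the Poincar\'e control of the linear term — is already contained in Lemma~\ref{lem:main_coercive}, and the only genuinely new ingredients are the rearrangement above together with the trivial lower bounds $N,P\ge\delta^{2}$. The one point that deserves a little care is the exact constant multiplying $\|C\|_{2}\|\nabla v\|_{2}$: applying Poincar\'e directly on $\Omega$ yields a single Poincar\'e constant of $\Omega$, whereas reproducing the $\Omega^{\pm}$-decomposition of Lemma~\ref{lem:main_coercive} — which then forces one to keep the refined Jensen lower bound for $P\ln(\cdot)$ in order to cancel the mean-value contributions — recovers the stated $c_p^{+}+c_p^{-}$; in either case the constant is purely geometric and the leftover terms are nonnegative, so this is cosmetic rather than substantial.
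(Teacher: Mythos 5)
Your argument is correct and is essentially the ``slight modification of the arguments of Lemma~\ref{lem:main_coercive}'' that the paper invokes while omitting the details: rearrange $b(v)\le M$, use Jensen's inequality with $\int_\Omega v\,dx=0$ to see that both logarithmic terms are nonnegative (so the unwanted one can be dropped), control $\int_\Omega Cv\,dx$ by Cauchy--Schwarz and Poincar\'e, and finish with $N,P\ge\delta^2$ and exponentiation. The only deviation --- a single Poincar\'e constant of $\Omega$ in place of $c_p^++c_p^-$ coming from the $\Omega^\pm$-splitting --- is, as you note, cosmetic and does not affect the way the corollary is used later (uniform $L^1$-bounds on $e^{\pm v}$, $n$ and $p$).
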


\begin{lem}\label{lem:main_wlsc}
 The functionals $\bar b_\lambda^\sigma$  are weakly lower semicontinuous in $L^1(\Omega)$ for $\lambda\ge 0$, $\sigma> 0$. More precisely, we have that
 \[
  v_n\rightharpoonup v\quad\text{in $L^1(\Omega)$}\quad \Longrightarrow \quad \bar b_\lambda^\sigma(v) \le \liminf\nolimits_{n\to \infty}\bar b_\lambda^\sigma(v_n).
 \]
\end{lem}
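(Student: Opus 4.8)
The plan is to run the direct method on a subsequence realising the lower limit, using the gradient bound hard-wired into $\Sigma_\sigma$ to recover compactness. Assume $v_n\rightharpoonup v$ in $L^1(\Omega)$ and put $\ell:=\liminf_n\bar b_\lambda^\sigma(v_n)$; if $\ell=+\infty$ there is nothing to show, so assume $\ell<+\infty$. Passing to a subsequence (not relabelled) along which $\bar b_\lambda^\sigma(v_n)\to\ell$, and discarding finitely many terms, I may take $v_n\in\Sigma_\sigma$ for every $n$, so that $\int_\Omega v_n\,dx=0$ and $\norm{\nabla v_n}_2\le\sigma$.

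First I would extract $H^1$-compactness. By Poincar\'e's inequality the bound $\norm{\nabla v_n}_2\le\sigma$ together with $\int_\Omega v_n\,dx=0$ keeps $(v_n)$ bounded in $H^1(\Omega)$; hence, along a further subsequence, $v_n\rightharpoonup\tilde v$ in $H^1(\Omega)$ and, by the compact Sobolev embedding (available since $d\le 3$), $v_n\to\tilde v$ in $L^2(\Omega)$ and a.e.\ in $\Omega$. Since already $v_n\rightharpoonup v$ in $L^1(\Omega)$, I obtain $\tilde v=v$, so that $v\in H^1(\Omega)$, $\int_\Omega v\,dx=0$, and weak lower semicontinuity of the seminorm gives $\norm{\nabla v}_2\le\sigma$ as well as $\lambda^2\norm{\nabla v}_2^2\le\liminf_n\lambda^2\norm{\nabla v_n}_2^2$.

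Next I would treat the three pieces of $b$ separately. For the linear term, (A2) gives $C\in L^2(\Omega)$, so strong $L^2$-convergence yields $\int_\Omega Cv_n\,dx\to\int_\Omega Cv\,dx$. For each logarithmic term, Jensen's inequality and $\int_\Omega v_n\,dx=0$ give $\tfrac1{|\Omega|}\int_\Omega e^{\pm v_n}\,dx\ge 1$, while $e^{\pm v_n}\to e^{\pm v}$ a.e.\ and Fatou's lemma give $\int_\Omega e^{\pm v}\,dx\le\liminf_n\int_\Omega e^{\pm v_n}\,dx$; since $\ln$ is continuous and increasing on $[1,+\infty)$, this yields $\ln\bigl(\tfrac1{|\Omega|}\int_\Omega e^{\pm v}\,dx\bigr)\le\liminf_n\ln\bigl(\tfrac1{|\Omega|}\int_\Omega e^{\pm v_n}\,dx\bigr)$, with both sides in $[0,+\infty]$. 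Reassembling: the quantities $\lambda^2\norm{\nabla v_n}_2^2$, $N\ln(\cdots)$, $P\ln(\cdots)$ are nonnegative and $\int_\Omega Cv_n\,dx$ converges, so superadditivity of $\liminf$ gives $\ell\ge\lambda^2\norm{\nabla v}_2^2+b(v)$. In particular $b(v)\le\ell<+\infty$, which together with $v\in\P$ and $\norm{\nabla v}_2\le\sigma$ shows $v\in\Sigma_\sigma$, so the right-hand side is exactly $\bar b_\lambda^\sigma(v)$; since every subsequence of the original sequence still converges weakly in $L^1(\Omega)$ to $v$, the bound $\bar b_\lambda^\sigma(v)\le\liminf_n\bar b_\lambda^\sigma(v_n)$ follows.

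I expect the only delicate point to be the compactness step: weak $L^1$-convergence by itself supplies neither strong nor a.e.\ convergence and is not preserved by the nonlinearity $e^{\pm v}$, so Fatou's lemma cannot be invoked directly on $\int_\Omega e^{\pm v_n}\,dx$. It is precisely the defining constraint $\norm{\nabla v}_2\le\sigma$ of $\Sigma_\sigma$ (absent in $\Sigma$) that, via Poincar\'e and Rellich, upgrades the weak $L^1$-limit to strong $L^2$ and a.e.\ convergence; everything else is then a routine combination of lower semicontinuity of the gradient norm, continuity of the linear term, and Fatou's lemma.
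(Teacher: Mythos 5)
Your proof is correct and follows essentially the same route as the paper: reduce to the finite-liminf case, use the $\Sigma_\sigma$ gradient bound together with Poincar\'e and the compact embedding to upgrade the weak $L^1$-limit to strong $L^2$ (and a.e.) convergence, handle the gradient and linear terms by lower semicontinuity and continuity, and pass the $\liminf$ through the logarithm for the exponential terms. The only difference is cosmetic: you justify $\int_\Omega e^{\pm v}\,dx\le\liminf_n\int_\Omega e^{\pm v_n}\,dx$ explicitly via Fatou and note nonnegativity of the logarithmic terms via Jensen before splitting the $\liminf$, details the paper leaves implicit.
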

\begin{proof}
 We consider the case where $\liminf_{n\to\infty}\bar b_\lambda^\sigma(v_n) <+\infty$, otherwise there is nothing to show. In this case we may extract a bounded subsequence (not relabeled) with $(v_n)\subset\Sigma_\sigma$. In particular, $\|\nabla v_n\|_{2}\le \sigma$ for all $n\in\nn$. Consequently, $(v_n)\subset H^1(\Omega)$ is bounded by the Poincar\'e inequality, which tells us that $v_n\to v$ in $L^2(\Omega)$ and $\nabla v_n\rightharpoonup \nabla v$ in $L^2(\Omega)$, for some subsequence (not relabeled). Hence, for the terms
 \[
  \int_\Omega Cv_n\,dx \quad\text{and}\quad \int_\Omega |\nabla v_n|^2dx,
 \]
 weak lower semicontinuity follows from linearity and the properties of norms, respectively.  Therefore, we are left to show the weak lower semicontinuity of the two middle terms of $\bar b_\lambda^\sigma$. 
 
 This result ultimately follows from the weak lower semicontinuity of the functionals $\int_\Omega e^{\pm v}dx$ in $L^1(\Omega)$ and the continuity and monotonicity of the logarithm function. In the following, we define the terms $s_n^\pm:= \int_\Omega e^{\pm v_n} dx$ and $g_k:=\inf_{n\ge k}\{s_n\}$. Since
 \[
  \int_\Omega e^{\pm v}dx \le \liminf\nolimits_{n\to \infty}\int_\Omega e^{\pm v_n}dx,
 \]
 we are left to show that $\ln(\liminf_{n\to\infty} s_n) \le \liminf_{n\to\infty} \ln(s_n)$. By definition,
 \[
  g_k\le s_n\quad\forall n\ge k\quad\Longrightarrow\quad \ln(g_k) \le \ln(s_n)\quad\forall n\ge k,
 \]
 since $\ln$ is monotonically increasing. Consequently, $\ln(g_k) \le \inf\nolimits_{n\ge k} \{\ln(s_n)\}$. Hence
 \begin{align*}
  \ln(\liminf_{n\to\infty} s_n) = \ln(\lim\nolimits_{k\to\infty} g_k) &= \lim_{k\to\infty} \ln(g_k) \\
  &\le \lim\nolimits_{k\to\infty}\inf\nolimits_{n\ge k} \{\ln(s_n)\} = \liminf_{n\to\infty} \ln(s_n),
 \end{align*}
 due to the continuity of the logarithm.
\end{proof}

\begin{lem}\label{lem:main_convex}
 The functional $\bar b_\lambda$ is strictly convex for all $\lambda\ge 0$.
\end{lem}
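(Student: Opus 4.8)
The plan is to reduce the whole statement to the strict convexity of the ``free energy'' functional $b$ on the convex set $\Sigma$: since $v\mapsto\lambda^2\|\nabla v\|_{2}^2$ is a non-negative (hence convex) quadratic and the sum of a convex functional with a strictly convex one is strictly convex, proving that $b$ is strictly convex on $\Sigma$ immediately yields the claim for every $\lambda\ge 0$. As a preliminary point I would record that $\Sigma$ is convex: $\P$ is a linear subspace of $L^1(\Omega)$, the gradient is linear so the condition $\nabla v\in L^2(\Omega)$ is stable under convex combinations, and finiteness of $b$ along segments in $\Sigma$ is a byproduct of its convexity (established below, together with the lower bound coming from the proof of Lemma~\ref{lem:main_coercive}). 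Hence the notion of strict convexity on $\Sigma$ is meaningful, and it suffices to verify it for $b$.

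The core step is the convexity of the log-integral functionals $v\mapsto \ln\int_\Omega e^{\alpha v}\,dx$ for $\alpha=\pm 1$. Given $\theta\in(0,1)$ and $v_0,v_1\in\Sigma$, I would write $e^{\alpha(\theta v_0+(1-\theta)v_1)}=\bigl(e^{\alpha v_0}\bigr)^{\theta}\bigl(e^{\alpha v_1}\bigr)^{1-\theta}$ and apply H\"older's inequality with conjugate exponents $1/\theta$ and $1/(1-\theta)$ to obtain
\[
 \int_\Omega e^{\alpha(\theta v_0+(1-\theta)v_1)}\,dx \;\le\; \Bigl(\int_\Omega e^{\alpha v_0}\,dx\Bigr)^{\theta}\Bigl(\int_\Omega e^{\alpha v_1}\,dx\Bigr)^{1-\theta},
\]
and taking logarithms gives convexity of $v\mapsto\ln\int_\Omega e^{\alpha v}\,dx$ (the integrals are finite for $v\in\Sigma$ by Corollary~\ref{cor:regular}). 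For strictness I would invoke the equality case in H\"older's inequality: equality forces $e^{\alpha v_0}$ and $e^{\alpha v_1}$ to be proportional almost everywhere, i.e.\ $v_0-v_1$ to equal a constant a.e.; but two elements of $\P$ differing by a constant must coincide, since that constant has zero mean over $\Omega$. Hence $v\mapsto\ln\int_\Omega e^{\alpha v}\,dx$ is \emph{strictly} convex on $\P$ for each $\alpha=\pm 1$.

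To conclude, I would use that $N=\delta^2+\int_\Omega C^+\,dx\ge\delta^2>0$ \emph{always} holds, so the term $N\ln\bigl(|\Omega|^{-1}\int_\Omega e^{-v}\,dx\bigr)$ is strictly convex on $\Sigma$; the term $P\ln\bigl(|\Omega|^{-1}\int_\Omega e^{v}\,dx\bigr)$ is convex because $P\ge 0$; and $v\mapsto\int_\Omega Cv\,dx$ is linear, hence convex. Adding a strictly convex functional to convex ones yields a strictly convex functional, so $b$, and therefore $\bar b_\lambda=\lambda^2\|\nabla\cdot\|_{2}^2+b$, is strictly convex for every $\lambda\ge 0$. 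I do not expect a real obstacle here; the only points that need care are that $\Sigma$ is convex (so the assertion is well posed) and that strictness survives the possibly degenerate case $P=0$, which is exactly why the argument must be anchored on the $N$-term rather than on the $P$-term. (For $\lambda>0$ one could alternatively read off strict convexity directly from $\lambda^2\|\nabla v\|_{2}^2$ on $\P$, but the $b$-based argument covers all $\lambda\ge 0$ at once.)
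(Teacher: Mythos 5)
Your proposal is correct and follows essentially the same route as the paper: convexity of the log-exponential terms via H\"older's inequality with exponents $1/\theta$ and $1/(1-\theta)$, linearity of the doping term, and strictness extracted from the equality case of H\"older together with the zero-mean constraint forcing the additive constant to vanish. Your extra remarks (convexity of $\Sigma$, anchoring strictness on the $N$-term in case $P=0$) are harmless refinements of the same argument rather than a different approach.
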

\begin{proof}
The first part of $\bar b_\lambda$ for $\lambda > 0$ is strictly convex, simply due to the strict convexity of the norm. Therefore, it suffices to show the strict convexity of $\bar b_0$. 

For $u,v \in \Sigma$, $\kappa \in (0,1)$, we obtain from the H\"older inequality
\[
 \int_\Omega e^{\pm(\kappa u + (1-\kappa)v)} dx \leq \norm{e^{\pm\kappa u}}_{\sigma} \norm{e^{\pm(1-\kappa)v}}_{\frac{\sigma}{\sigma - 1}}
 = \norm{e^{\pm u}}_{\kappa \sigma}^\kappa \norm{e^{\pm v}}_{(1-\kappa)\frac{\sigma}{\sigma - 1}}^{1-\kappa}.
\]
Choosing $\sigma = 1/\kappa$ then yields
\[
 \int_\Omega  e^{\pm(\kappa u + (1-\kappa)v)} \,dx \leq \norm{e^{\pm u}}_{1}^\kappa \norm{e^{\pm v}}_{1}^{1-\kappa}.
\]
Since the logarithm is monotonically increasing we get
\begin{align*}
 \ln \left(\int_\Omega e^{\pm(\kappa u + (1-\kappa)v)} dx\right ) &\leq \ln \left(\norm{e^{\pm u}}_{1}^\kappa \norm{e^{\pm v}}_{1}^{1-\kappa} \right) \\
 &= \kappa \ln \left( \int_\Omega e^{\pm u} dx\right) + (1-\kappa) \ln \left(\int_\Omega e^{\pm v} dx \right).
\end{align*}
Since $\int C v\,dx$ is linear, we obtain altogether the convexity of $\bar b_0$. 

To ensure the strict convexity, we show that equality holds only for $u = v$. Since H\"older's inequality is based on Young's inequality, equality holds for 
\[
 \frac{e^{\kappa u\sigma }}{\norm{e^{\kappa u}}_{\sigma}^\sigma} = \frac{e^{(1-\kappa)v\frac{\sigma}{\sigma -1} }}{\norm{e^{(1-\kappa)v}}_{\frac{\sigma}{\sigma -1}}^{\frac{\sigma}{\sigma -1}}}.
\]
Setting again $\sigma = 1/\kappa$, we have
\[
 \frac{e^{\kappa u\sigma }}{\norm{e^{\kappa u}}_{\sigma}^\sigma} = \frac{e^u}{\norm{e^{\kappa u}}_{\sigma}^\sigma} \quad\text{and}\quad \frac{e^{(1-\kappa)v\frac{\sigma}{\sigma -1} }}{\norm{e^{(1-\kappa)v}}_{\frac{\sigma}{\sigma -1}}^{\frac{\sigma}{\sigma -1}}} = \frac{e^v}{\norm{e^{(1-\kappa)v}}_{\frac{\sigma}{\sigma -1}}^{\frac{\sigma}{\sigma -1}}}.
\]
Thus, in the equality case
\[
 e^u = e^v \frac{\norm{e^{\kappa u}}_{\sigma}^\sigma}{\norm{e^{(1-\kappa)v}}_{\frac{\sigma}{\sigma -1}}^{\frac{\sigma}{\sigma -1}}}\quad\Longleftrightarrow\quad
 u = v + w \quad \text{with } \quad w = \ln \left( \frac{\norm{e^{\kappa u}}_{\sigma}^\sigma}{\norm{e^{(1-\kappa)v}}_{\frac{\sigma}{\sigma -1}}^{\frac{\sigma}{\sigma -1}}}\right).
\]
However, from the assumptions $\int_\Omega u \,dx = 0$, $\int_\Omega v \,dx = 0$, we obtain $w = 0$ and thus $u=v$, which implies strict convexity of the functional.
\end{proof}

\subsection{A priori estimates}\label{subsec:apriori} 

We begin by showing a priori estimates for weak solutions of the algebraic equation \eqref{zero}, and proceed with $L^\infty$-estimates of solutions to \eqref{NPE}, $\lambda>0$.

\begin{lem}\label{lem:apriori_zero}
 Let $C\in H^1(\Omega)$ and $V\in \Sigma$ be a solution of \eqref{zero}, then $n$ and $p$, defined in \eqref{eq:dens}, are elements of $L^2(\Omega)$. Define for $V$ the values
 \[
  \alpha:= \frac{N}{\int_\Omega e^{-V}\,dx},\quad \beta:=\frac{P}{\int_\Omega e^V\,dx},\quad 0<\gamma^2:= \sqrt{\alpha\beta} <\infty,
 \]
 with $N,P$ given in \eqref{eq:NP}. Then, $V\in\Sigma$ satisfies further $\gamma^2\|\nabla V\|_2 \le \frac{1}{2}\|\nabla C\|_2$.
\end{lem}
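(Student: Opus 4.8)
\emph{Proof strategy.} The plan is to exploit that on any solution $V$ of \eqref{zero} the densities are $n(V)=\alpha e^{-V}$ and $p(V)=\beta e^{V}$, so that their product $n(V)\,p(V)\equiv\alpha\beta=\gamma^{4}$ is a constant while their difference is the datum, $n(V)-p(V)=C$. First I would verify that $0<\gamma^{2}<\infty$: Jensen's inequality together with $\int_{\Omega}V\,dx=0$ gives $\int_{\Omega}e^{\pm V}\,dx\ge|\Omega|>0$; finiteness of these integrals is Corollary~\ref{cor:regular} applied to $V\in\Sigma$, so $\alpha,\beta>0$; and $N,P\ge\delta^{2}>0$ by \eqref{eq:NP}. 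Hence $\alpha,\beta\in(0,\infty)$ and $\gamma^{2}=\sqrt{\alpha\beta}\in(0,\infty)$.

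Next, from $n(V)-p(V)=C$ and $n(V)p(V)=\gamma^{4}$ with $n(V),p(V)>0$, the value $n(V)$ is the positive root of $z^{2}-Cz-\gamma^{4}=0$ and $p(V)$ that of $z^{2}+Cz-\gamma^{4}=0$, i.e.\ $n(V)=\tfrac12\big(C+\sqrt{C^{2}+4\gamma^{4}}\big)$ and $p(V)=\tfrac12\big(-C+\sqrt{C^{2}+4\gamma^{4}}\big)$. Using $\sqrt{C^{2}+4\gamma^{4}}\le|C|+2\gamma^{2}$ yields the pointwise bound $0\le n(V),p(V)\le|C|+\gamma^{2}$, and since $C\in H^{1}(\Omega)\hookrightarrow L^{2}(\Omega)$ this already gives $n(V),p(V)\in L^{2}(\Omega)$.

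For the gradient estimate I would write $V=f^{-1}(C)$ with $f(t):=\alpha e^{-t}-\beta e^{t}$. This $f$ is smooth and strictly decreasing, and by the AM--GM inequality $|f'(t)|=\alpha e^{-t}+\beta e^{t}\ge 2\sqrt{\alpha\beta}=2\gamma^{2}$, so $f^{-1}$ is globally Lipschitz with constant $(2\gamma^{2})^{-1}$. The chain rule for a Lipschitz map composed with an $H^{1}$ function then gives $V\in H^{1}(\Omega)$ and, differentiating $C=\alpha e^{-V}-\beta e^{V}$, the a.e.\ identity $\nabla C=-(n(V)+p(V))\nabla V$; combined with $n(V)+p(V)\ge 2\sqrt{n(V)p(V)}=2\gamma^{2}$ this gives $|\nabla V|\le(2\gamma^{2})^{-1}|\nabla C|$ a.e.\ in $\Omega$, and squaring and integrating over $\Omega$ produces $\gamma^{4}\|\nabla V\|_{2}^{2}\le\tfrac14\|\nabla C\|_{2}^{2}$, which is the claim. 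The one delicate point is this chain-rule step: since $t\mapsto\alpha e^{-t}-\beta e^{t}$ is not globally Lipschitz, one cannot differentiate $C=\alpha e^{-V}-\beta e^{V}$ directly using only $V\in\Sigma$ and $C\in H^{1}(\Omega)$; passing to the globally Lipschitz inverse $f^{-1}$ (or, equivalently, truncating $V$ at level $k$, differentiating, and letting $k\to\infty$ with the help of the already established $n(V),p(V)\in L^{2}(\Omega)$ and $\nabla V\in L^{2}(\Omega)$) resolves it, and the remaining steps are elementary.
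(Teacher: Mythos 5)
Your proposal is correct and follows essentially the same route as the paper: solve the pointwise algebraic relation $n-p=C$, $np=\gamma^4$ explicitly (the paper does this via the quadratic for $g=e^{V}$), deduce $n,p\in L^{2}(\Omega)$ from the resulting representation and $C\in L^{2}(\Omega)$, and obtain the gradient bound by differentiating the algebraic relation using $n+p=\sqrt{C^{2}+4\gamma^{4}}\ge 2\gamma^{2}$. Your extra care with the chain rule, passing through the globally Lipschitz inverse $f^{-1}$ with Lipschitz constant $(2\gamma^{2})^{-1}$, makes rigorous a step the paper performs only formally, but it is the same computation.
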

\begin{proof}
The proof mimics ideas stated in \cite{Unterreiter,Unt94}.
 Set $V = \ln g$ for some nonnegative function $g$. Then the algebraic equation \eqref{zero} reads
 \[
  \frac{\gamma^4}{\beta}\frac{1}{g} - \beta g = C,
 \]
 or equivalently
 \[
  g^2(x) + g(x)C(x)/\beta - \gamma^4/\beta^2 = 0.
 \]
 Solving for $g$ gives
 \[
  g(x) = \frac{1}{2\beta}\left[ - C(x) + \sqrt{4\gamma^4 + C^2(x)}\, \right],
 \]
 since $g$ is required to be nonnegative. Therefore,
 \begin{align}\label{VinL0}
  V(x) = \ln\left( \frac{1}{2\beta}\left[ - C(x) + \sqrt{4\gamma^4 + C^2(x)}\, \right] \right).
 \end{align}
 A simple consequence of the algebraic expression is that regularity of solutions may be determined easily. Indeed, taking the exponential of $V$ and rearranging the terms give
 \begin{align}\label{eq:representation}
  p=p(V) = \beta e^{V} = \frac{1}{2}\left[ - C(x) + \sqrt{4\gamma^4 + C^2(x)}\, \right].
 \end{align}
 Therefore, we square the equation for $p$ and integrate over $\Omega$ to obtain
 \begin{align}\label{eq:p_estimate}
  \int_\Omega p^2 dx = \frac{1}{4}\int_\Omega \left|C - \sqrt{4\gamma^4 + C^2}\right|^2 dx \le \int_\Omega \big(C^2 + 2\gamma^4\big)\,dx,
 \end{align}
 which provides an $L^2$-estimate for $p\in L^2(\Omega)$. Using the algebraic equation \eqref{zero} again, we end up with a similar estimate for $n$. More specifically, we have
 \begin{align}\label{eq:algebraic_eq}
  n = C + p\in L^2(\Omega),
 \end{align}
 with a similar $L^2$-bound on $n$ as in \eqref{eq:p_estimate}. Therefore, both $n,p\in L^2(\Omega)$. 
 
 Similarly, we compute the derivative of the algebraic equation to obtain
  \[
   (C + 2\beta g)\nabla g + g\nabla C = 0,
  \]
  and consequently,
  \[
   \int_\Omega |\nabla V|^2 dx = \int_\Omega \left|\frac{\nabla g}{g}\right|^2 dx = \int_\Omega \left|\frac{1}{\sqrt{4\gamma^4 + C^2}}\right|^2|\nabla C|^2 dx.
  \]
 Since $0\le 1/|4\gamma^4 + C^2| \le 1/4\gamma^4$ a.e.~in $\Omega$, one gets the estimate for $\gamma^2\|\nabla V\|_2$. 
\end{proof}

\begin{rem}\label{rem:p_regularity}
 In fact, the regularity of $n$ and $p$ may be significantly improved. Indeed, since $H^1(\Omega)\hookrightarrow L^6(\Omega)$ for $d\le 3$, we can use \eqref{eq:representation} to obtain $p\in L^6(\Omega)$, and hence $n\in L^6(\Omega)$ from the algebraic equation \eqref{eq:algebraic_eq}. Furthermore, we have that $p\in W^{1,1}(\Omega)$, and consequently $n\in W^{1,1}(\Omega)$. Indeed, we use the representation \eqref{eq:representation} to obtain
 \[
  \int_\Omega |\nabla p|\,dx \le \frac{1}{2}\int_\Omega |\nabla C|\,dx + \int_\Omega \frac{|C|}{\sqrt{4\gamma^4 + C^2}}|\nabla C|\,dx \le \left(\frac{|\Omega|}{2} + \frac{\|C\|_2}{4\gamma^4}\right)\|\nabla C\|_2.
 \]
 Another simple observation that results from the algebraic equation \eqref{eq:algebraic_eq} is the explicit form
 \[
  n+p = C + 2p = \sqrt{4\gamma^4 + C^2} \ge 2\gamma^2\qquad\text{a.e.~in $\Omega$}.
 \]
\end{rem}

As pointed out earlier, for $\lambda>0$, we obtain uniform estimates when $C\in L^r(\Omega)$, $r>2$. The proof essentially relies on the fact that $V_\lambda\in\Sigma$ and that $B$ is monotone, since $\bar b_\lambda$ is convex on its domain of definition. To obtain $L^\infty$-estimates, we make use of the Stampacchia method \cite{kinderlehrer}.

\begin{lem}\label{lem:main_regular}
 Let $C\in L^2(\Omega)$ and $V_\lambda\in \Sigma$, $\lambda>0$, be a solution of \eqref{NPE}, then
 \[
  \|V_\lambda\|_{H^1(\Omega)} \le c_\lambda\,\|C\|_{2},
 \]
 for a constant $c_\lambda>0$, depending on $\lambda$. Furthermore, $V_\lambda\in L^\infty(\Omega)$ for $C\in L^r(\Omega)$, $r>2$.
\end{lem}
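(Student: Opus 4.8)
The plan is to prove the two assertions separately: the $H^1$-bound by an energy estimate exploiting the monotonicity of $B$, and the $L^\infty$-bound (for $C\in L^r(\Omega)$, $r>2$) by the Stampacchia truncation method. Throughout I would work with the weak formulation of \eqref{NPE},
\[
 \lambda^2\int_\Omega\nabla V_\lambda\cdot\nabla\test\,dx+\int_\Omega B(V_\lambda)\,\test\,dx=0,
\]
which, since $\int_\Omega B(V_\lambda)\,dx=-N+P+\int_\Omega C\,dx=0$, holds for every $\test\in H^1(\Omega)$ (and, after the routine truncation $T_M(\test)=\max\{-M,\min\{M,\test\}\}$ followed by the limit $M\to\infty$, even for unbounded admissible directions such as $V_\lambda$ itself).

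For the $H^1$-estimate I would test with $\test=V_\lambda$; since $V_\lambda\in\Sigma$ has vanishing mean, and since $B$ is monotone (being the first variation on $\Sigma$ of the convex functional $b$, cf.\ Lemma~\ref{lem:main_convex}), comparison with the constant direction $v\equiv0\in\Sigma$ gives
\[
 \lambda^2\norm{\nabla V_\lambda}_2^2=-\int_\Omega B(V_\lambda)V_\lambda\,dx\le-\int_\Omega B(0)V_\lambda\,dx=-\int_\Omega C\,V_\lambda\,dx\le\norm{C}_2\norm{V_\lambda}_2,
\]
the constant part of $B(0)$ dropping out because $\int_\Omega V_\lambda\,dx=0$. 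Poincaré's inequality then yields $\norm{\nabla V_\lambda}_2\le(c_p/\lambda^2)\norm{C}_2$, and a second application gives $\norm{V_\lambda}_{H^1(\Omega)}\le c_\lambda\norm{C}_2$ with $c_\lambda\sim\lambda^{-2}$.

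For the $L^\infty$-estimate I would first record, using Jensen's inequality and Corollary~\ref{cor:regular}, that $\alpha:=N/\int_\Omega e^{-V_\lambda}\,dx$ and $\beta:=P/\int_\Omega e^{V_\lambda}\,dx$ lie in $(0,\infty)$, so that $B(V_\lambda)=-\alpha e^{-V_\lambda}+\beta e^{V_\lambda}+C$. Choosing $k_0\ge0$ with $\alpha e^{-k_0}\le\beta e^{k_0}$ and testing, for $k\ge k_0$, with $w_k:=(V_\lambda-k)^+$ (again via the truncation $\min\{w_k,M\}$ to keep the exponential term integrable, the limit being finite by the identity itself), one has on the level set $A_k:=\{V_\lambda>k\}$ the sign information $\alpha e^{-V_\lambda}-\beta e^{V_\lambda}\le0$, hence $-B(V_\lambda)\le-C\le|C|$ there, so that
\[
 \lambda^2\norm{\nabla w_k}_2^2=\int_{A_k}\bigl(-B(V_\lambda)\bigr)w_k\,dx\le\int_{A_k}|C|\,w_k\,dx\le\norm{C}_r\,|A_k|^{\frac1{r'}-\frac1q}\norm{w_k}_q,
\]
where $1/r+1/r'=1$ and $q$ is chosen with $r'<q$ and, moreover, $q>2r'$ (possible since $r'<2$: take $q=6$ for $d=3$, $q=4$ for $d\le2$), using Hölder twice. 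Since $w_k$ vanishes on $\Omega\setminus A_k$, whose measure is at least $|\{V_\lambda<0\}|>0$ (recall $\int_\Omega V_\lambda\,dx=0$), a Sobolev--Poincaré-type inequality for functions vanishing on a set of positive measure gives $\norm{w_k}_q\le c\norm{\nabla w_k}_2$, hence $\norm{w_k}_q\le(c^2\norm{C}_r/\lambda^2)\,|A_k|^{1/r'-1/q}$. Comparing two levels $h>k\ge k_0$ through $(h-k)|A_h|^{1/q}\le\norm{w_k}_q$ yields $|A_h|\le K^q(h-k)^{-q}|A_k|^{\,q/r'-1}$ with $K=c^2\norm{C}_r/\lambda^2$ and exponent $q/r'-1>1$, so the classical Stampacchia iteration lemma \cite{kinderlehrer} forces $|A_k|=0$ for all $k$ above a finite threshold, i.e.\ $V_\lambda$ is essentially bounded above. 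The symmetric argument with $(-V_\lambda-k)^+$ bounds it below, whence $V_\lambda\in L^\infty(\Omega)$.

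I expect the main difficulty to be the interplay between the nonlocal, exponential nonlinearity and the truncation machinery: concretely, (i) extracting the favourable sign of $-\alpha e^{-V_\lambda}+\beta e^{V_\lambda}$ on the super-/sub-level sets of $V_\lambda$, which is exactly where the a priori control of $\alpha,\beta$ via Corollary~\ref{cor:regular} enters, and (ii) legitimising $V_\lambda$ and $w_k$ as test directions against the term $\beta e^{V_\lambda}$, which must be done by the $\min\{\cdot,M\}$-truncation and monotone/dominated convergence, the finiteness of the limiting integral $\int_{A_k}\beta e^{V_\lambda}w_k\,dx$ being obtained only a posteriori from the tested identity. Checking that the Stampacchia exponent stays above $1$ in the borderline dimension $d=3$ is a further small point, immediate from $r'<2\le q/2=3$.
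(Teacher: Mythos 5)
Your proof is correct, and while the first half coincides with the paper's argument, the second half takes a genuinely different route through the Stampacchia machinery. The $H^1$-estimate is exactly the paper's: test with $V_\lambda$, use monotonicity of $B$ by inserting $B(0)$ to reduce the nonlinear term to $-\int_\Omega C V_\lambda\,dx$, then Poincar\'e, giving $c_\lambda\sim\lambda^{-2}$. For the $L^\infty$-bound the paper keeps its test functions in $\P$ by subtracting the mean over the level set, i.e.\ it tests with $\psi_{\lambda,k}^+=\chi_{A_k^+}\bigl(\varphi_{\lambda,k}^+-\int_{A_k^+}\varphi_{\lambda,k}^+dx\bigr)$, invokes monotonicity of $B$ once more against the constant $m$, and then runs the level-set iteration with $L^{2+\epsilon}$-H\"older and Poincar\'e on $A_k^+$; the resulting recursion has the exponent $\epsilon/(2+\epsilon)$. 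You instead observe that $\int_\Omega B(V_\lambda)\,dx=0$ lets you test with all of $H^1(\Omega)$, so the plain truncations $(V_\lambda-k)^+$ are admissible, and you replace monotonicity by the explicit sign of $-\alpha e^{-V_\lambda}+\beta e^{V_\lambda}$ on $\{V_\lambda>k\}$ for $k\ge k_0$, which gives $-B(V_\lambda)\le|C|$ there. This buys simpler truncation bookkeeping (no mean corrections, no comparison constants) and, with $q=6$ for $d=3$ ($q=4$ for $d\le 2$) against $r'<2$, an iteration inequality with exponent $q/r'-1>1$, which is precisely the hypothesis of the Kinderlehrer--Stampacchia lemma, so the finite vanishing level comes out directly; the paper's $2+\epsilon$ bookkeeping is less transparent on this point. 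What the paper's variant buys in return is that it never uses the specific structure of the nonlinearity, only monotonicity of $B$ and the mean-zero test space $\Y$. The two delicate steps you single out are genuine and correctly handled: admissibility of unbounded test functions against $\beta e^{V_\lambda}$ via $\min\{\cdot,M\}$-truncation with the finiteness of $\int_{A_k}e^{V_\lambda}w_k\,dx$ read off a posteriori, and the Sobolev--Poincar\'e inequality for $w_k$, whose constant depends on $|\{V_\lambda\le 0\}|>0$ and hence on the solution itself --- acceptable here, since the $L^\infty$ assertion is qualitative and the paper's own constant likewise depends on $\|V_\lambda\|_{2+\epsilon}$.
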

\begin{proof}
 Testing \eqref{NPE} with $V_\lambda\in\Sigma$ and integration by parts yield
 \[
  \lambda^2 \int_\Omega|\nabla V_\lambda|^2 dx + \int_\Omega B(V_\lambda)V_\lambda\,dx = 0.
 \]
 Due to the convexity of $b$, we know that $B$ is monotone, i.e.
 \[
  \langle B(u)-B(v), u - v\rangle \ge 0\quad\text{for all $u,v\in \Sigma$}.
 \]
 Therefore, inserting $B(0)$ in between yields
 \[
  \langle B(V_\lambda),V_\lambda\rangle = \langle B(V_\lambda)-B(0),V_\lambda\rangle + \langle B(0),V_\lambda\rangle \ge \langle B(0),V_\lambda\rangle = \int_\Omega CV_\lambda\,dx,
 \]
 where we used the fact that $V_\lambda\in\P$. A simple application of Young's inequality yields
 \[
  \lambda^2 \int_\Omega|\nabla V_\lambda|^2 dx \le -\int_\Omega CV_\lambda\,dx \le \frac{1}{2\kappa}\|C\|_2^2 + \frac{\kappa}{2}\|V_\lambda\|_2^2 \le \frac{1}{2\kappa}\|C\|_2^2 + \frac{\kappa}{2} c_p^2\|\nabla V_\lambda\|_2^2,
 \]
 where we used the Poincar\'e inequality with constant $c_p>0$ in the last inequality. Hence, choosing $\kappa=\lambda^2/c_p^2$ provides the required estimate.
 
 Now set $\varphi_{\lambda,k}^+=\max\{0,V_\lambda-k\}\in H^1(\Omega)$, $A_k^+=\text{supp}(\varphi_{\lambda,k}^+)$ and
 \[
  \psi_{\lambda,k}^+ = \chi_{A_k^+}\left(\varphi_{\lambda,k}^+ - \int_{A_k^+} \varphi_{\lambda,k}^+\, dx\right)\in \P \cap H^1(\Omega).
 \]
 Following the arguments above, we test \eqref{NPE} with $\psi_{\lambda,k}^+$ to obtain
 \[
  \lambda^2 \int_\Omega|\nabla \psi_{\lambda,k}^+|^2 dx + \int_{A_k^+} B(V_\lambda)\psi_{\lambda,k}^+\,dx = 0.
 \]
 By definition, $\psi_{\lambda,k}^+ = V_\lambda - m$ on $A_k^+$, where $m=k + \int_\Omega \varphi_{\lambda,k}^+\, dx\in\rr$. Therefore,
 \[
  \langle B(V_\lambda),\psi_{\lambda,k}^+\rangle = \langle B(V_\lambda)-B(m),\psi_{\lambda,k}^+\rangle + \langle B(m),\psi_{\lambda,k}^+\rangle \ge \langle B(m),\psi_{\lambda,k}^+\rangle = \int_{A_k^+} C\psi_{\lambda,k}^+\,dx,
 \]
where we used, again, that $\psi_{\lambda,k}^+\in \P$. Mimicking the arguments from above, we obtain
 \begin{align}\label{eq:linfty}
   \|\nabla\psi_{\lambda,k}^+\|_{L^2(A_k^+)}^2 \le c_\lambda^2\,\|C\|_{L^2(A_k^+)}^2,
 \end{align}
 with $c_\lambda=c_p/\lambda^2$. Henceforth, we can apply Stampacchia's strategy to obtain the $L^\infty$-estimate. On the right-hand side we estimate from above by
 \[
   \|C\|_{L^2(A_k^+)}^2 \le \|C\|_{L^{2+\epsilon}(A_k^+)}^2|A_k^+|^{\epsilon/(2+\epsilon)}.
 \]
  On the left-hand side \eqref{eq:linfty}, we use the Poincar\'e inequality on $\psi_{\lambda,k}^+$ to obtain
 \[
  \left\|\varphi_{\lambda,k}^+ - \frac{1}{|A_k^+|}\int_{A_k^+}\varphi_{\lambda,k}^+\,dx \,\right\|_{L^2(A_k^+)}^2 = \|\psi_{\lambda,k}^+\|_{L^2(A_k^+)}^2 \le c\|\nabla \psi_{\lambda,k}^+\|_{L^2(A_k^+)}^2.
 \]
 The term on the left-hand side may be explicitly written as
 \[
  \int_{A_k^+} \left|V_\lambda - k - \frac{1}{|A_k^+|}\int_{A_k^+} \varphi_{\lambda,k}^+\, dx\right|^2 dx =\int_{A_k^+} |V_\lambda - k|^2dx - \frac{1}{|A_k^+|}\left|\int_{A_k^+} \varphi_{\lambda,k}^+\, dx\right|^2.
 \]
 We now estimate the last term on the right-hand side using H\"older's inequality to obtain
 \[
  \left|\int_{A_k^+} \varphi_{\lambda,k}^+\, dx\right|^2 \le |A_k^+|^{\frac{2}{p}}\left(\int_{A_k^+} |\varphi_{\lambda,k}^+|^q dx\right)^{\frac{2}{q}} = |A_k^+|^{\frac{2}{p}}\|V_\lambda-k\|_{L^q(A_k^+)}^2 \le |A_k^+|^{\frac{2}{p}}\|V_\lambda\|_{L^q(\Omega)}^2, 
 \]
 with $1/p + 1/q=1$. Choosing $p=(2+\epsilon)/(1+\epsilon)$, i.e., $q=2+\epsilon$, we obtain
 \[
  \frac{1}{|A_k^+|}\left|\int_{A_k^+} \varphi_{\lambda,k}^+\, dx\right|^2 \le \|V_\lambda\|_{L^q(\Omega)}^2|A_k^+|^{\epsilon/(2+\epsilon)}.
 \]
 Furthermore, notice that for any $h>k\ge 0$, we have that $A_h^+\subset A_k^+$ and hence
 \[
  \int_{A_k^+} |V_\lambda - k|^2dx \ge \int_{A_h^+} |h - k|^2dx = |h-k|^2|A_h^+|.
 \]
 Putting these terms together leads to the inequality
 \[
  \zeta(h) \le \frac{c_\zeta}{|h-k|^2} \zeta(k)^\beta,
 \]
 with $\zeta(h)=|A_h^+|$, $\beta=\epsilon/(2+\epsilon)$, and some constant $c_\zeta>0$ depending on the norms $\|C\|_{2+\epsilon}$ and $\|V_\lambda\|_{2+\epsilon}$. From a lemma of Kinderlehrer and Stampacchia \cite[II.~Lemma B1]{kinderlehrer}, there exists some $K>0$ such that $\zeta(k)=|A_{k}^+|=0$ for every $k\ge K$, which clearly implies $V_\lambda \le K$ almost everywhere in $\Omega$.

 Analogously, one shows a lower bound for $V_\lambda$ by testing \eqref{NPE} with
 \[
  \psi_{\lambda,k}^- = \chi_{A_k^-}\left(\varphi_{\lambda,k}^- - \int_\Omega \varphi_{\lambda,k}^-\, dx\right)\in \P \cap H^1(\Omega),
 \]
 where $\varphi_{\lambda,k}^-=\min\{V_\lambda+k,0\}\in H^1(\Omega)$ and $A_k^-=\text{supp}(\varphi_{\lambda,k}^-)$. Altogether, $V_\lambda\in L^\infty(\Omega)$.
\end{proof}

\begin{lem}\label{lem:main_convergence}
 Let $C\in H^1(\Omega)$. The sequence of minimizers $(V_\lambda)$ of \eqref{min_lambda} contains subsequences that converge weakly in $H^1(\Omega)$ and strongly in $L^4(\Omega)$ towards the unique minimizer $V_0$ of $({\rm {\bf MP}}_0)$. Furthermore,
 \[
  n(V_\lambda) \to n(V_0),\qquad p(V_\lambda)\to p(V_0)\quad\text{in\, $L^2(\Omega)$}.
 \]
\end{lem}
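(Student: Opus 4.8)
The plan is to combine a direct-method compactness argument with a family of energy identities obtained by testing \eqref{NPE} with the charge densities, and to pin down the limit via strict convexity. First I would extract uniform bounds. Since $V_0\in\Sigma$ is admissible for $\bar b_\lambda$, minimality of $V_\lambda$ gives $\lambda^2\|\nabla V_\lambda\|_2^2+b(V_\lambda)\le\lambda^2\|\nabla V_0\|_2^2+b(V_0)$; as $V_0$ minimizes $\bar b_0=b$ over $\Sigma$ and $V_\lambda\in\Sigma$, one has $b(V_0)\le b(V_\lambda)$, hence $\|\nabla V_\lambda\|_2\le\sigma_0:=\|\nabla V_0\|_2$ for all $\lambda>0$. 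With $\int_\Omega V_\lambda\,dx=0$ and the Poincar\'e inequality this yields a uniform $H^1$-bound, and $b(V_\lambda)\le\bar b_\lambda(V_\lambda)\le\bar b_\lambda(0)=0$, so by Corollary~\ref{cor:regular} also $\int_\Omega e^{\pm V_\lambda}\,dx\le M_1$ uniformly; together with Jensen's inequality ($\int_\Omega e^{\pm V_\lambda}\,dx\ge|\Omega|$) the quantities $\alpha_\lambda,\beta_\lambda,\gamma_\lambda$ (defined as in Lemma~\ref{lem:apriori_zero} with $V$ replaced by $V_\lambda$) are bounded above and below away from $0$, uniformly in $\lambda$. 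Passing to a subsequence, $V_\lambda\rightharpoonup\tilde V$ in $H^1(\Omega)$, $V_\lambda\to\tilde V$ in $L^4(\Omega)$ by Rellich, and $V_\lambda\to\tilde V$ a.e.

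Next I would identify $\tilde V=V_0$. All the $V_\lambda$, and also $\tilde V$ (by weak lower semicontinuity of $\|\nabla\cdot\|_2$), lie in $\Sigma_{\sigma_0}$, where $\bar b_0^{\sigma_0}$ coincides with $b$; Lemma~\ref{lem:main_wlsc} then gives $b(\tilde V)\le\liminf_\lambda b(V_\lambda)$ (in particular $b(\tilde V)\le0<\infty$, so $\tilde V\in\Sigma$), while $b(V_\lambda)\le\bar b_\lambda(V_\lambda)\le\bar b_\lambda(V_0)=\lambda^2\sigma_0^2+b(V_0)\to b(V_0)$ yields $\limsup_\lambda b(V_\lambda)\le b(V_0)$; since $V_0$ minimizes $b$ over $\Sigma$ this forces $b(\tilde V)=b(V_0)$, and strict convexity (Lemma~\ref{lem:main_convex}) gives $\tilde V=V_0$ (this argument also reproves the uniqueness of the minimizer of $\bar b_0$). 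As the limit is independent of the subsequence, the whole family satisfies $V_\lambda\rightharpoonup V_0$ in $H^1(\Omega)$ and $V_\lambda\to V_0$ in $L^4(\Omega)$.

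For the densities I would note that for $\lambda>0$ Lemma~\ref{lem:main_regular} gives $V_\lambda\in L^\infty(\Omega)$ (using $C\in H^1(\Omega)\hookrightarrow L^6(\Omega)$), so $n(V_\lambda),p(V_\lambda)$ and $q_\lambda:=n(V_\lambda)-p(V_\lambda)$ lie in $H^1\cap L^\infty$ and may be inserted as test functions in the weak formulation of \eqref{NPE} (permissible because $\int_\Omega B(V_\lambda)\,dx=0$). Using $\nabla p(V_\lambda)=p(V_\lambda)\nabla V_\lambda$, $\nabla n(V_\lambda)=-n(V_\lambda)\nabla V_\lambda$ and the pointwise cancellation $n(V_\lambda)p(V_\lambda)=\gamma_\lambda^4$, testing with $p(V_\lambda)$ gives
\[
 \|p(V_\lambda)\|_2^2=\gamma_\lambda^4|\Omega|-\int_\Omega Cp(V_\lambda)\,dx-\lambda^2\int_\Omega p(V_\lambda)|\nabla V_\lambda|^2\,dx
\]
and analogously for $n(V_\lambda)$; since the $\lambda^2$-terms are nonnegative and $\gamma_\lambda$ is uniformly bounded, these identities provide uniform $L^2$-bounds on $n(V_\lambda)$ and $p(V_\lambda)$. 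Consequently $e^{\pm V_\lambda}=\beta_\lambda^{-1}p(V_\lambda)$, resp.\ $\alpha_\lambda^{-1}n(V_\lambda)$, are bounded in $L^2$, hence uniformly integrable in $L^1$, and Vitali's theorem together with $e^{\pm V_\lambda}\to e^{\pm V_0}$ a.e.\ gives $\int_\Omega e^{\pm V_\lambda}\,dx\to\int_\Omega e^{\pm V_0}\,dx$, i.e.\ $\alpha_\lambda\to\alpha_0$, $\beta_\lambda\to\beta_0$, $\gamma_\lambda\to\gamma_0$. Thus $n(V_\lambda)\to n(V_0)$, $p(V_\lambda)\to p(V_0)$ a.e.\ and, being $L^2$-bounded, weakly in $L^2$; in particular $q_\lambda\rightharpoonup q_0=n(V_0)-p(V_0)=C$.

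The final and hardest step is to upgrade this to strong convergence. Testing \eqref{NPE} with $q_\lambda$ and using $\nabla q_\lambda=-(n(V_\lambda)+p(V_\lambda))\nabla V_\lambda$ gives
\[
 \|q_\lambda\|_2^2=\int_\Omega Cq_\lambda\,dx-\lambda^2\int_\Omega(n(V_\lambda)+p(V_\lambda))|\nabla V_\lambda|^2\,dx\le\int_\Omega Cq_\lambda\,dx .
\]
Since $q_\lambda\rightharpoonup C$ in $L^2$, the right-hand side tends to $\|C\|_2^2$, so $\limsup_\lambda\|q_\lambda\|_2^2\le\|C\|_2^2$, while weak lower semicontinuity gives the reverse inequality for the $\liminf$; hence $\|q_\lambda\|_2\to\|C\|_2$, so $q_\lambda\to C$ strongly in $L^2$, and also $\lambda^2\int_\Omega(n(V_\lambda)+p(V_\lambda))|\nabla V_\lambda|^2\,dx\to0$. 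Feeding the latter into the identity for $\|p(V_\lambda)\|_2^2$, together with $\gamma_\lambda\to\gamma_0$ and $\int_\Omega Cp(V_\lambda)\,dx\to\int_\Omega Cp(V_0)\,dx$, yields $\|p(V_\lambda)\|_2^2\to\gamma_0^4|\Omega|-\int_\Omega Cp(V_0)\,dx=\|p(V_0)\|_2^2$, the last equality being the same identity applied to \eqref{zero} (test $B(V_0)=0$ with $p(V_0)$, using $n(V_0)p(V_0)=\gamma_0^4$). Weak convergence plus convergence of norms in the Hilbert space $L^2$ then gives $p(V_\lambda)\to p(V_0)$ in $L^2$, and hence $n(V_\lambda)=q_\lambda+p(V_\lambda)\to C+p(V_0)=n(V_0)$ in $L^2$. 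The delicate point throughout is exactly this last step: the compactness arguments only deliver weak $L^2$-convergence of the densities, and recovering strong convergence hinges on the favourable sign of the $\lambda^2$-terms together with the cancellation $n(V_\lambda)p(V_\lambda)=\gamma_\lambda^4$ forcing the $L^2$-norms to converge.
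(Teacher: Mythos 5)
Your argument is correct, and its first half (the uniform bound $\|\nabla V_\lambda\|_2\le\|\nabla V_0\|_2$ from the minimality chain, Poincar\'e, compact embedding into $L^4(\Omega)$, identification of the limit via lower semicontinuity and uniqueness/strict convexity from Lemmas~\ref{lem:main_wlsc} and \ref{lem:main_convex}) is essentially the paper's proof. Where you genuinely diverge is the strong $L^2$-convergence of the densities. The paper argues directly on the difference $n(V_\lambda)-n(V_0)$: using convexity of $s\mapsto e^{-s}$ and the higher integrability $e^{-V_0}\in L^4(\Omega)$ coming from the algebraic representation (Remark~\ref{rem:p_regularity}), it bounds $\|n(V_\lambda)-n(V_0)\|_2$ by $\|e^{-V_0}\|_4\,\|V_\lambda-V_0\|_4$ plus the difference of the normalizing integrals, and lets the $L^4$-convergence do the work. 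You instead test \eqref{NPE} with $n(V_\lambda)$, $p(V_\lambda)$ and $q_\lambda=n(V_\lambda)-p(V_\lambda)$ (legitimate once Lemma~\ref{lem:main_regular} gives $V_\lambda\in L^\infty(\Omega)$, so these are $H^1\cap L^\infty$ functions, and the mean-zero restriction on test functions is harmless because $\int_\Omega B(V_\lambda)\,dx=0$), exploit the pointwise cancellation $n(V_\lambda)p(V_\lambda)=\gamma_\lambda^4$ together with the favourable sign of the $\lambda^2$-terms to obtain convergence of the $L^2$-norms, and conclude by weak convergence plus norm convergence in the Hilbert space $L^2(\Omega)$, using \eqref{zero} tested with $p(V_0)$ to identify the limiting norm. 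Both routes are sound. The paper's is shorter and quantitative, giving an explicit estimate in terms of $\|V_\lambda-V_0\|_4$; yours is softer but delivers some extras for free: uniform $L^2$-bounds on $n(V_\lambda),p(V_\lambda)$ independent of $\lambda$, the convergences $\alpha_\lambda\to\alpha_0$, $\beta_\lambda\to\beta_0$ via Vitali (which is in fact a cleaner justification of the $L^1$-convergence of $e^{\pm V_\lambda}$ than the paper's appeal to dominated convergence), the vanishing of $\lambda^2\int_\Omega(n(V_\lambda)+p(V_\lambda))|\nabla V_\lambda|^2\,dx$, and convergence of the whole family rather than only of subsequences.
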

\begin{proof}
We begin by considering the sequence of inequalities
\begin{align}\label{eq:sequence}
 \lambda^2 \int_\Omega |\nabla V_\lambda|^2 dx + b(V_\lambda) \le \lambda^2 \int_\Omega |\nabla V_0|^2 dx + b(V_0) \le \lambda^2 \int_\Omega |\nabla V_0|^2 dx + b(V_\lambda).
\end{align}
Therefore, from Lemma~\ref{lem:apriori_zero} we obtain
\[
 \int_\Omega |\nabla V_\lambda|^2 dx \le \int_\Omega |\nabla V_0|^2 dx \le M,
\]
for some constant $M>0$. Since $(V_\lambda)\subset\P$, we have the boundedness of $(V_\lambda)$ in $H^1(\Omega)$ by the generalized Poincaré inequality. Therefore, we can extract a weakly converging subsequence (not relabeled), which converges towards some $V_*\in H^1(\Omega)$. We can then estimate
\[
 \bar b_0(V_*) \le \liminf_{\lambda \to 0} \bar b_0(V_\lambda) \le \liminf_{\lambda \to 0} \bar b_\lambda(V_\lambda) \le \liminf_{\lambda \to 0} \bar b_\lambda(V_0) = \bar b_0(V_0).
\]
However, by the uniqueness of the minimizer of $\bar b_0$, we obtain $V_*\equiv V_0$. Hence, $V_\lambda \rightharpoonup V_0$ in $H^1(\Omega)$.

Due to the compact embedding $H^1(\Omega) \hookrightarrow L^4(\Omega)$ for $d\le 3$, we may further extract another subsequence (not relabeled) satisfying
\[
 V_\lambda \to V_0\quad\text{in}\quad L^4(\Omega),\qquad V_\lambda\to V_0\quad\text{a.e.~in\, $\Omega$}.
\]
Notice from the sequence of inequalities \eqref{eq:sequence}, that
\[
 b(V_\lambda) \le \lambda^2 \int_\Omega |\nabla V_0|^2 dx + b(V_0) \le K,
\]
for some $K>0$ independent of $\lambda$. Corollary~\ref{cor:regular} then provides the boundedness of the sequence $(e^{\pm V_\lambda})$ in $L^1(\Omega)$. Moreover, the almost everywhere convergence of $V_\lambda$ towards $V_0$ also provides the almost everywhere convergence of $e^{\pm V_\lambda}$ towards $e^{\pm V_0}$. By the Lebesgue dominated convergence theorem, we obtain the strong convergence $e^{\pm V_\lambda}\to e^{\pm V_0}$ in $L^1(\Omega)$. It is now easy to see that
\[
 n(V_\lambda) \to n(V_0),\qquad p(V_\lambda)\to p(V_0)\quad\text{in}\quad L^1(\Omega).
\]
In order to show the above convergence in $L^2(\Omega)$, we will have to work slightly more. 

We begin by computing the difference
\begin{align*}
 n(V_\lambda) - n(V_0) &= \frac{N\,e^{-V_\lambda}}{\int e^{-V_\lambda}dx}-\frac{N\,e^{-V_0}}{\int e^{-V_0}dx} \\
 &= \frac{N}{\int e^{-V_\lambda}dx}\left[\left( e^{-V_\lambda} - e^{-V_0} \right) + \left( \int_\Omega e^{-V_0}dx - \int_\Omega e^{-V_\lambda}dx \right)\frac{n(V_0)}{N}\right].
\end{align*}
Taking the square of the equality above and integrating over $\Omega$ yields
\[
 \|n(V_\lambda) - n(V_0)\|_{2}^2 \le \frac{2N^2}{\|e^{-V_\lambda}\|_{1}^2}\left[\| e^{-V_\lambda} - e^{-V_0} \|_{2}^2 + \frac{\|n(V_0)\|_{2}^2}{N^2}\left| \|e^{-V_0}\|_{1} - \|e^{-V_\lambda}\|_{1}\right|^2\right].
\]
For the first term on the right-hand side, we use convexity of $e^{-s}$, $s\in\rr$, to deduce
\[
 e^{-V_0} - e^{-V_\lambda} \le e^{-V_0}(V_\lambda-V_0).
\]
Consequently, we obtain
\[
 \|n(V_\lambda) - n(V_0)\|_{2}^2 \le \frac{2N^2}{\|e^{-V_\lambda}\|_{1}^2}\left[\|e^{-V_0}\|_{4}^2\|V_\lambda-V_0\|_{4}^2 + \frac{\|n(V_0)\|_{2}^2}{N^2}\left| \|e^{-V_0}\|_{1} - \|e^{-V_\lambda}\|_{1}\right|^2\right].
\]
From Remark~\ref{rem:p_regularity}, we see that $\|e^{-V_0}\|_{4}$ is bounded, and so we can pass to the limit to conclude
\[
 n(V_\lambda)\to n(V_0)\quad\text{in}\quad L^2(\Omega).
\]
Similar arguments may be used to derive the strong convergence for $(p(V_\lambda))$ in $L^2(\Omega)$.
\end{proof}

\subsection{Proof of Theorem~\ref{thm:main_state}}
 We begin the proof by showing the existence of minimizers for the auxiliary problem \eqref{OPaux} and use a priori estimates obtained above to conclude the result for \eqref{min_lambda}.

 Let $(v_n^{\lambda,\sigma})\subset L^1(\Omega)$ be a minimizing sequence of $\bar b_\lambda^\sigma$. In particular, there exists a subsequence (not relabeled) with $(v_n^{\lambda,\sigma})\subset \Sigma_\sigma$. Due to the coercivity of $\bar b_0^\sigma$ on $L^1(\Omega)$ (cf.~Lemma~\ref{lem:main_coercive}), we have the boundedness of $(v_n^{\lambda,\sigma})$ in $L^1(\Omega)$. Furthermore, since $\|\nabla v_n^{\lambda,\sigma}\|_{2}\le \sigma$ and $\int_\Omega v_n^{\lambda,\sigma} dx=0$ for all $n\in\nn$, the generalized Poincar\'e inequality provides the boundedness of $(v_n^{\lambda,\sigma})$ in $L^2(\Omega)$, and therefore the boundedness of $(v_n^{\lambda,\sigma})$ in $H^1(\Omega)$. Due to reflexivity of $H^1(\Omega)$, we may extract a weakly converging subsequence (not relabeled), satisfying $v_n^{\lambda,\sigma}\rightharpoonup v_\lambda^\sigma$ in $H^1(\Omega)$ for some $v_\lambda^\sigma\in H^1(\Omega)$. Consequently, $v_n^{\lambda,\sigma}\rightharpoonup v$ in $L^1(\Omega)$. We further obtain $v_\lambda^\sigma\in\Sigma_\sigma$ from the weak lower semicontinuity of $\bar b_\lambda^\sigma$ in $L^1(\Omega)$ (cf.~Lemma~\ref{lem:main_wlsc}). Since $\bar b_\lambda^\sigma$ is strictly convex (cf.~Lemma~\ref{lem:main_convex}), $v_\lambda^\sigma\in\Sigma_\sigma$ is the unique minimizer.
 
 From the a priori estimate on a solution to \eqref{zero}, $\gamma^2\|\nabla V_\lambda\|_{2}\le \frac{1}{2}\|\nabla C\|_{2}$ given in Lemma~\ref{lem:apriori_zero}, we may choose $\sigma$ sufficiently large so as to obtain $V_0=v_0^\sigma$ for any $\sigma\ge \sigma_*$ for some $\sigma^*<\infty$ sufficiently large, thereby obtaining the unique minimizer of $({\bf MP}_0)$. Similarly, the same arguments apply to the case $\lambda>0$ due to the a priori estimates obtained in Lemma~\ref{lem:main_regular}. Finally, the strong convergence $V_\lambda \to V_0$ in $L^4(\Omega)$ is given in Lemma~\ref{lem:main_convergence}, thereby concluding the proof of Theorem~\ref{thm:main_state}.\hfill$\square$

\section{Analysis of the Constrained Optimization Problem}\label{sec:opt}
The goal of this section is to prove the existence of a solution to the constrained optimization problem \eqref{OP} and to derive the first-order optimality system.

\subsection{Existence of minimizers to the constraint optimization problem}
 We define the set of admissible doping profiles as
\begin{align}\label{admissible}
 \U \subset H^1(\Omega),
\end{align}
with the additional property: 
\[
 \text{$A\subset\U$ bounded w.r.t.~$\|\nabla\cdot\|_2$ implies $A$ bounded in $H^1(\Omega)$}.
\]

\begin{rem}
 Examples of such admissible sets are $H_0^1(\Omega)$, $\P\cap H^1(\Omega)$, and
 \[
  \left\{u\in H^1(\Omega)\,|\, \|u\|_{p}\le M \right\}, \quad p\ge 1,
 \]
 for some fixed constant $M>0$. This extends significantly the range of admissible sets for the doping profile considered so far \cite{HiPi00,BuPi03}, where a typical restriction was $C \in H^1(\Omega)\cap L^\infty(\Omega)$.
\end{rem}

In order to formulate the problem, we write a weak solution of the NNPE \eqref{NPE} as solution of the operator equation
\begin{align}\label{eq:weakNPE}
e_\lambda(V_\lambda,C) = 0\quad \text{in}\quad \Y^* := \big(H^1(\Omega)\cap \P\big)^*,
\end{align}
where the operator $e_\lambda \colon \Sigma \times \U \to \Y^*$ is given by 
\[
 \langle e_\lambda(V_\lambda,C),\varphi\rangle := \langle -\lambda^2 \Delta V_\lambda - n(V_\lambda) + p(V_\lambda) + C,\varphi \rangle \quad \text{ for all }\varphi \in \Y,
\]
with the previously defined densities 
\begin{align}\label{eq:densities}
 n(V) = N\frac{e^{-V}}{\int_\Omega e^{-V}\,dx} , \qquad p(V) =P\frac{e^{V}}{\int_\Omega e^{V} \,dx}.
\end{align}

The optimal control problem that is investigated in the sequel reads:
\begin{problem}
Find $(V_*,C_*)\in \Sigma\times \U$ such that
\begin{align}\tag{$\text{\bf OP}_\lambda$}\label{optproblem}
 (V_*,C_*) = \arg\min J(V,C) \quad \text{subject to}\quad e_\lambda(V,C) = 0\quad\text{in}\quad \Y^*, 
\end{align}
 where the functional is given by 
\[
 J(V,C) = \frac{1}{2} \norm{n(V) - n_d}_{2}^2 + \frac{1}{2} \norm{p(V)- p_d}_{2}^2 + \frac{\sigma}{2} \norm{\nabla (C - C_{\text{ref}})}_{2}^2,
\]
for some given $n_d,p_d\in L^2(\Omega)$, $C_{\text{ref}}\in H^1(\Omega)$ and constant $\sigma>0$.
\end{problem}

The existence of a minimizer for \eqref{optproblem} is a consequence of the well-posedness result for \eqref{NPE} and the a priori estimates obtained in the previous section.
\begin{thm}
 There exists at least one solution $(V_*,C_*)\in \Sigma\times \U$ to \eqref{optproblem}, $\lambda\ge 0$.
\end{thm}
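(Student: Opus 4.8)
The plan is to use the direct method of the calculus of variations on the reduced functional. First I would take a minimizing sequence $\{(V_n,C_n)\}\subset\Sigma\times\U$ for $J$ subject to $e_\lambda(V_n,C_n)=0$. Since $J(V_n,C_n)$ is bounded, the term $\tfrac{\sigma}{2}\|\nabla(C_n-C_{\text{ref}})\|_2^2$ is bounded, hence $\|\nabla C_n\|_2$ is bounded; by the defining property of $\U$ (bounded w.r.t.\ $\|\nabla\cdot\|_2$ implies bounded in $H^1(\Omega)$) the sequence $(C_n)$ is bounded in $H^1(\Omega)$. Extract a subsequence (not relabeled) with $C_n\rightharpoonup C_*$ in $H^1(\Omega)$ and $C_n\to C_*$ in $L^2(\Omega)$ by compact embedding; one should check that $\U$ is weakly closed in $H^1(\Omega)$ so that $C_*\in\U$ (this holds for the listed examples, being convex and closed).

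Next I would pass to the limit in the state equation. For each $n$, $V_n=V_\lambda[C_n]$ is the unique solution of \eqref{NPE} with doping $C_n$, which by Theorem~\ref{thm:main_state} coincides with the unique minimizer of $\bar b_\lambda$ (with $N,P$ built from $C_n$ via \eqref{eq:NP}). Using the a priori estimates — Lemma~\ref{lem:apriori_zero} for $\lambda=0$ and Lemma~\ref{lem:main_regular} for $\lambda>0$, together with the coercivity estimate \eqref{lem:main_coercive:eq:co} / Corollary~\ref{cor:regular} controlling $\|V_n\|_1$ and $\|e^{\pm V_n}\|_1$ — the sequence $(V_n)$ is bounded in $H^1(\Omega)$ (uniformly, since $\|\nabla C_n\|_2$ and $\|C_n\|_2$ are bounded). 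Hence $V_n\rightharpoonup V_*$ in $H^1(\Omega)$, $V_n\to V_*$ in $L^4(\Omega)$ and a.e.\ along a further subsequence. As in the proof of Lemma~\ref{lem:main_convergence}, the uniform $L^1$-bound on $e^{\pm V_n}$ plus a.e.\ convergence gives, via dominated convergence, $e^{\pm V_n}\to e^{\pm V_*}$ in $L^1(\Omega)$ and then $n(V_n)\to n(V_*)$, $p(V_n)\to p(V_*)$ in $L^2(\Omega)$ (the total charges $N_n,P_n$ converge since $C_n\to C_*$ in $L^1(\Omega)$). These convergences are enough to pass to the limit in $\langle e_\lambda(V_n,C_n),\varphi\rangle=0$ for every $\varphi\in\Y$: the Laplacian term is linear and closed under weak $H^1$-convergence, the density terms converge in $L^2(\Omega)$, and $C_n\to C_*$ in $L^2(\Omega)$. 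Thus $e_\lambda(V_*,C_*)=0$, i.e.\ $(V_*,C_*)$ is admissible, and by uniqueness $V_*=V_\lambda[C_*]$.

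Finally I would invoke weak lower semicontinuity of $J$. The tracking terms $\tfrac12\|n(V_n)-n_d\|_2^2+\tfrac12\|p(V_n)-p_d\|_2^2$ in fact converge to the corresponding terms at $(V_*,C_*)$ by the strong $L^2$-convergence of $n(V_n),p(V_n)$; the regularization term $\tfrac{\sigma}{2}\|\nabla(C_n-C_{\text{ref}})\|_2^2$ is weakly lower semicontinuous under $C_n\rightharpoonup C_*$ in $H^1(\Omega)$ since $\nabla C_n\rightharpoonup\nabla C_*$ in $L^2(\Omega)$ and the $L^2$-norm is weakly l.s.c. Hence $J(V_*,C_*)\le\liminf_n J(V_n,C_n)=\inf$, so $(V_*,C_*)$ is a minimizer.

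The main obstacle is the uniform-in-$C_n$ a priori bound for the states $(V_n)$ in $H^1(\Omega)$: one must verify that the constants produced in Lemma~\ref{lem:apriori_zero}, Lemma~\ref{lem:main_regular} and Corollary~\ref{cor:regular} depend on $C_n$ only through bounded quantities (namely $\|C_n\|_2$ and $\|\nabla C_n\|_2$, and the associated $N_n,P_n\ge\delta^2>0$), and that the lower bound $\gamma_n^2>0$ stays bounded away from zero along the sequence — for $\lambda=0$ this follows from $n+p=\sqrt{4\gamma^4+C^2}\ge 2\gamma^2$ combined with $\gamma_n^4=\alpha_n\beta_n$ and the uniform $L^1$-control of $e^{\pm V_n}$, while for $\lambda>0$ it is built into the estimate of Lemma~\ref{lem:main_regular}. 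A secondary point requiring care is ensuring $\U$ is weakly sequentially closed in $H^1(\Omega)$, which is why the admissible sets are taken convex and closed.
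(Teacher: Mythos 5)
Your proposal is correct and follows essentially the same route as the paper: the direct method on a minimizing sequence, boundedness of $(C_k)$ in $H^1(\Omega)$ from the regularization term and the defining property of $\U$, uniform bounds on the states via Lemmas~\ref{lem:apriori_zero} and \ref{lem:main_regular}, passage to the limit in $e_\lambda(V_k,C_k)=0$, and weak lower semicontinuity of $J$. The only (harmless) deviations are that you obtain strong $L^2$-convergence of $n(V_k),p(V_k)$ by the dominated-convergence argument of Lemma~\ref{lem:main_convergence}, whereas the paper uses weak $L^2$-convergence and identifies the limits through uniqueness of solutions of \eqref{NPE}, and that you explicitly flag the weak closedness of $\U$ and the uniformity of the a priori constants (in particular the lower bound on $\gamma^2$ for $\lambda=0$), points the paper leaves implicit.
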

\begin{proof}
By definition $J$ is bounded from below and we can define
\[
 j := \inf\left\{J(V,C)\,|\, (V,C)\in \Sigma\times \U \right\}\ge 0.
\]
We choose a minimizing sequence $\{(V_k,C_k)\} \subset \Sigma\times \U$. Since $J$ is radially unbounded with respect to the second variable, and $\U$ is an admissible set given in \eqref{admissible}, we obtain uniform boundedness for $(C_k)$ in $H^1(\Omega)$. Lemmas~\ref{lem:apriori_zero} ($\lambda=0$) and \ref{lem:main_regular} ($\lambda>0$) then provide the uniform boundedness of $(V_k)$ in $H^1(\Omega)$ and $\{(n(V_k),p(V_k))\}$ in $L^2(\Omega)$. This allows to extract subsequences (not relabeled):
\begin{gather*}
 V_k \rightharpoonup V_*,\quad C_k \rightharpoonup C_*\quad\text{in}\quad H^1(\Omega),\\
 n(V_k)\rightharpoonup n_*,\quad p(V_k)\rightharpoonup p_*\quad\text{in}\quad L^2(\Omega).
\end{gather*}
Consequently, passing to the limit in the weak formulation of \eqref{NPE} gives
\[
 0=\langle e_\lambda(V_k,C_k),\varphi \rangle\; \longrightarrow\; \langle -\lambda^2\Delta V_*  - n_* + p_* + C_*,\varphi\rangle \quad \text{ for all } \varphi \in H^1(\Omega).
\]
Since solutions of \eqref{NPE} are known to be unique, we may identify the limit densities $n_*=n(V_*)$, and $p_*=p(V_*)$ accordingly. Therefore, $n(V_k)\rightharpoonup n(V_*)$ and $p(V_k)\rightharpoonup p(V_*)$ in $L^2(\Omega)$.

By the weak lower semicontinuity of the $L^2$-norm, we have that
\[
 J(V_*,C_*) \le \lim\inf\nolimits_{k\to\infty} J(V_k,C_k) = j,
\]
by which we conclude that $(V_*,C_*)$ solves \eqref{optproblem}.
\end{proof}

\begin{rem}\label{control-to-state-rem}
As we have seen in the previous section, solutions $V_\lambda$ of \eqref{NPE} may be characterized as unique minimizers corresponding to \eqref{min_lambda} for each $\lambda\ge 0$, respectively. In particular, Lemmas~\ref{lem:apriori_zero} and \ref{lem:main_regular} provide a priori bounds for $V_\lambda$, $\lambda\ge 0$, where the bounds strongly depend on the doping profile $C\in H^1(\Omega)$. This allows us to define the so-called {\em control-to-state map}, $\Phi\colon \U \to \Sigma$, which maps any $C$ to the unique weak solution $V_\lambda\in\Sigma$, satisfying $e_\lambda(V_\lambda,C) = 0$ in $H^1(\Omega)^*$. In fact, this map is continuous and bounded. This fact will be essential when deriving first order necessary optimality conditions in the next sections.
\end{rem}

\begin{rem}
Note, that we cannot ensure the uniqueness of the minimizer of the optimization problem  \eqref{optproblem} due to the non-convexity induced by the constraint. Hence, the convergence behaviour of minimizing pairs $(V_\lambda,C_\lambda)$ of the optimization is a priori not clear. This is in contrast to the asymptotic limit for the state equation.
\end{rem}

\subsection{Derivation of the first-order optimality system}
In the following, we assume that
\[
  U_{ad} = C_{\text{ref}} + H^1\cap \mathcal{P},
\]
where $C_{\text{ref}}\in H^1(\Omega)$. Owing to the definition of $\U$, we have that
\[
 \int_\Omega C\,dx = \int_\Omega C_{\text{ref}}\,dx\qquad\text{for any $C\in\U$}.
\]
For reasons of convenience, we denote $u:=C-C_{\text{ref}}\in \Y$.

Formally, the first-order optimality system may be derived using the standard $L^2$ approach \cite{Pinnau,Troeltzsch}. In this approach, one defines an extended Lagrangian $\mathcal{L}\colon \Sigma\times \Y\times\Y\to \rr$ associated to the constrained optimization problem, which reads
\[
 \mathcal{L}(V,u,\xi) := J(V,u+C_{\text{ref}}) + \langle e_\lambda(V,u+C_{\text{ref}}),\xi\rangle.
\]
The first-order optimality system corresponding to $\mathcal{L}$ is then given by
\[
 d\mathcal{L}(V,u,\xi)=0,
\]
with $d$ denoting the Fr\'echet derivative of the Lagrangian $\mathcal{L}$. As usual, the derivative with respect to $\xi$ yields the state system, while the adjoint system is derived by taking the derivative with respect to $V$, i.e.,
\[
 [d_V e_\lambda(V,u+C_{\text{ref}})]^*\xi = -d_V J(V,u+C_{\text{ref}}).
\]
Taking the derivative with respect to $u$ yields 
\[
 [d_u e_\lambda(V,u+C_{\text{ref}})]^*\xi = -d_u J(V,u+C_{\text{ref}}).
\]
Elementary computations lead to the adjoint system
\begin{align}\label{eq:adjoint}
 -\lambda^2\Delta\xi + K[\xi] = K_n[n-n_d] - K_p[p-p_d]\quad\text{in $\Omega$},\qquad \nu\cdot \nabla\xi=0\quad\text{on $\partial\Omega$},
\end{align}
where
\[
 K_n[\xi] = n \left( \xi - \frac{1}{N} \int_\Omega n\,\xi\,dy \right),\qquad K_p[\xi] = p \left( \xi - \frac{1}{P} \int_\Omega p\,\xi\, dy \right),
\]
and $K[\xi] = K_n[\xi] + K_p[\xi]$, and the optimality condition for $u$ given by
\begin{align}
 \sigma\Delta u = \xi\quad\text{in $\Omega$},\qquad \nu\cdot\nabla u = 0\quad\text{on $\partial\Omega$},
\end{align}
which is clearly uniquely solvable for $u\in \Y$, as a result of standard linear elliptic theory.

In order to rigorously justify the first-order optimality system, we will first show the Fr\'echet differentiability of the Lagrangian $\mathcal{L}$. We begin this step with the following statement.

\begin{lem}\label{lem:nemytskii}
 The mappings $n,p\colon\Sigma\to L^2(\Omega)$ as defined in \eqref{eq:densities} are Fr\'echet differentiable as Nemytskii-operators.
\end{lem}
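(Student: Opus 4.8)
The plan is to show that the Nemytskii operators $n$ and $p$ arising from the nonlinear, nonlocal expressions in \eqref{eq:densities} are Fréchet differentiable from $\Sigma$ (equipped with the $H^1$-topology, via the embedding $\Sigma\hookrightarrow H^1(\Omega)$) into $L^2(\Omega)$. I would treat only $p$ in detail, since $n$ follows by replacing $V$ with $-V$. The candidate derivative at $V\in\Sigma$ is the bounded linear operator
\[
 p'(V)h = P\,\frac{e^{V}}{\int_\Omega e^{V}\,dx}\left(h - \frac{\int_\Omega e^{V}h\,dx}{\int_\Omega e^{V}\,dx}\right) = p(V)\left(h - \frac1P\int_\Omega p(V)\,h\,dx\right),
\]
which is exactly the operator $K_p$ appearing in the adjoint system \eqref{eq:adjoint}. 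The first step is therefore to verify that $p'(V)$ is a well-defined bounded operator $\Y\to L^2(\Omega)$: this uses Corollary~\ref{cor:regular} to control $\|e^{V}\|_{1}$ from below and above, Remark~\ref{rem:p_regularity} together with the $H^1(\Omega)\hookrightarrow L^6(\Omega)$ embedding to bound $\|p(V)\|_{L^4(\Omega)}$, and Hölder's inequality to absorb $h\in H^1(\Omega)\hookrightarrow L^4(\Omega)$.

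The second and main step is to estimate the remainder $R(h) := p(V+h) - p(V) - p'(V)h$ and show $\|R(h)\|_{2} = o(\|h\|_{H^1})$ as $\|h\|_{H^1}\to 0$. I would split $R(h)$ additively: first write $p(V+h)-p(V)$ by expanding both the numerator $e^{V+h}-e^{V}$ and the denominator $\int e^{V+h} - \int e^{V}$, then subtract $p'(V)h$ and collect the pieces into (i) a pointwise Taylor-remainder term coming from $e^{V+h}-e^{V}-e^{V}h$, (ii) a term coming from the analogous second-order remainder of the denominator, and (iii) a genuinely quadratic cross term of the form $(\text{denominator increment})\times(\text{numerator increment})$. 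For (i) one uses the pointwise bound $|e^{s+t}-e^s-e^s t|\le \tfrac12 e^s t^2 e^{|t|}$, so the $L^2$-norm is controlled by $\|e^{V}h^2 e^{|h|}\|_{2}$; here Hölder splits $e^{V}\in L^{q_1}$, $h^2\in L^{q_2}$, $e^{|h|}\in L^{q_3}$, with $e^{V}$ bounded in a fixed Lebesgue space by Corollary~\ref{cor:regular}, $\|h^2\|_{L^{q_2}}=\|h\|_{L^{2q_2}}^2$ bounded via Sobolev embedding by $\|h\|_{H^1}^2$, and $\|e^{|h|}\|_{L^{q_3}}$ bounded for small $\|h\|_{H^1}$ by the Moser–Trudinger inequality (in $d=2$) or the embedding $H^1\hookrightarrow L^6\hookrightarrow L^\infty$-type control combined with a power-series estimate (in $d=1$); in $d=3$, $H^1\hookrightarrow L^6$ only, so one must choose the Lebesgue exponents carefully and use that $\sum_k \|h\|_{L^{kq_3}}^k/k!$ converges for $\|h\|_{H^1}$ small enough since $\|h\|_{L^{kq_3}}\le C_k\|h\|_{H^1}$ with $C_k$ of moderate growth — this integrability of $e^{|h|}$ in the relevant $L^p$ for small $h$ is the key technical point, already implicitly available from Corollary~\ref{cor:regular} applied to $V+h$. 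Terms (ii) and (iii) are handled by the same Hölder-plus-Sobolev bookkeeping, using that $|\int e^{V}h\,dx|\le \|e^{V}\|_{4/3}\|h\|_{4}\le C\|h\|_{H^1}$ so that (iii) is $O(\|h\|_{H^1}^2)$, and noting the uniform lower bound on $\int e^{V+h}$ for small $h$ keeps all denominators away from zero.

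The third step is to confirm that $V\mapsto p'(V)$ is continuous from $\Sigma$ into $\mathcal{L}(\Y,L^2(\Omega))$, which upgrades Fréchet differentiability to continuous (i.e.\ $C^1$) differentiability and is what is actually needed later; this follows from the same estimates, using $e^{V_k}\to e^{V}$ in $L^p$ (Lebesgue dominated convergence, as in the proof of Lemma~\ref{lem:main_convergence}) whenever $V_k\to V$ in $H^1$. The main obstacle is the second step, specifically obtaining uniform-in-$h$ (for $\|h\|_{H^1}$ small) control of the exponential moments $\int_\Omega e^{p|h|}\,dx$ in the relevant Lebesgue exponents $p$ needed to close the Hölder estimates in dimension $d=3$; everything else is routine Sobolev-embedding bookkeeping. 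I would isolate that exponential-moment bound as a short preliminary lemma (essentially a quantitative restatement of Corollary~\ref{cor:regular}) and then the differentiability estimates fall out cleanly.
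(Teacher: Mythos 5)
Your first step is essentially the paper's entire proof: there, the Gateaux derivatives $\delta n(V)[h]=K_n[h]$ and $\delta p(V)[h]=K_p[h]$ are computed and shown, via $n,p\in L^4(\Omega)$ (Remark~\ref{rem:p_regularity}) and H\"older, to be bounded linear maps from $H^1(\Omega)$ to $L^2(\Omega)$; no remainder estimate is attempted there. The gap in your proposal lies precisely in your second step, in dimension $d=3$, which assumption (A1) permits. The bound you yourself call the key technical point --- uniform control of $\int_\Omega e^{q|h|}\,dx$ for $\|h\|_{H^1(\Omega)}$ small --- is false in three dimensions: $H^1(\Omega)$ embeds into $L^q(\Omega)$ only for $q\le 6$, so the inequality $\|h\|_{L^{kq_3}}\le C_k\|h\|_{H^1}$ that your power-series summation needs simply does not exist for $kq_3>6$, and there is no Moser--Trudinger substitute in $d=3$. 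Concretely, take $h_\varepsilon=\varepsilon\,\eta(x)\,|x-x_0|^{-\alpha}$ with $x_0\in\Omega$, $0<\alpha<1/2$, a smooth cut-off $\eta$ and a mean adjustment; then $\|h_\varepsilon\|_{H^1(\Omega)}$ is arbitrarily small while $\int_\Omega e^{|h_\varepsilon|}\,dx=+\infty$ for every $\varepsilon>0$. For such increments $V+h_\varepsilon\notin\Sigma$, so $p(V+h_\varepsilon)$ is not even defined, and a remainder bound $\|R(h)\|_2=o(\|h\|_{H^1})$ uniform over small $H^1$-balls cannot be obtained along the route you describe. Your appeal to Corollary~\ref{cor:regular} ``applied to $V+h$'' does not close this: that corollary presupposes $V+h\in\Sigma$ together with an a priori bound $b(V+h)\le M$, and neither membership in $\Sigma$ nor such a bound is implied by smallness of $\|h\|_{H^1}$; moreover the constant it produces depends exponentially on $\|C\|_2\|\nabla(V+h)\|_2$ and is in no way small with $h$.

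In $d=1$ (where $H^1\hookrightarrow L^\infty$) and $d=2$ (Moser--Trudinger) your scheme can be carried out: the identification of the derivative with $K_p$, the boundedness argument, the decomposition of the remainder with the pointwise bound $|e^{t}-1-t|\le\tfrac12 t^2e^{|t|}$, and the continuity of $V\mapsto \delta p(V)$ in step three are all sound, and in this respect your proposal is more complete than the paper's proof, which stops at the Gateaux level. But as written the argument does not cover $d=3$. A repair would require either measuring the increments in a norm controlling exponential moments (e.g.\ $H^1(\Omega)\cap L^\infty(\Omega)$, i.e.\ Fr\'echet differentiability with respect to a stronger norm), or restricting to increments along which $b(V+h)$ stays uniformly bounded, or retreating --- as the paper implicitly does --- to Gateaux differentiability together with boundedness and continuity of $K_n,K_p$. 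In short, the obstruction you correctly isolated is real, but the exponential-moment lemma you propose to resolve it with is false in three dimensions.
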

\begin{proof}
Using the differentiability of the exponential function and $(\cdot)^{-1}$ on $(0,\infty)$, we compute the Gateaux derivative of $n$ in the direction $h\in\Sigma$, which yields
\begin{align*}
 \delta n(V)[h] &= K_n[h] = n(V) \left( h - \frac{1}{N} \int_\Omega n(V) h\,dy \right).
\end{align*}
Due to Remark~\ref{rem:p_regularity}, we have that $n=n(V)\in L^4(\Omega)$, and hence
\begin{align*}
 \|\delta n [h]\|_2^2 = \int_\Omega n^2 \left( h - \frac{1}{N} \int_\Omega n h\,dy \right)^2 dx \le \|n\|_4^2 \left\| \left(h - \frac{1}{N} \int_\Omega n h\,dy \right)\right\|_4^2 <\infty,
\end{align*}
for any $h\in H^1(\Omega)$, which says that $\delta n(V)\colon H^1(\Omega)\to L^2(\Omega)$ is a bounded linear operator.

Analogously, one can show the same estimates for
\[
 \delta p(V)[h] = K_p[h] = p \left( h - \frac{1}{P} \int_\Omega ph\, dy \right),
\]
and therefore $\delta p\colon H^1(\Omega)\to L^2(\Omega)$ is also a bounded linear operator.
\end{proof}

A direct consequence of the previous lemma is the Fr\'echet differentiability of the operator $e_\lambda$, which we summarize in the following result.

\begin{lem}\label{lem:F_diff}
The mapping $e_\lambda$ as defined in \eqref{eq:weakNPE} is Fréchet differentiable. The actions of the derivative at a point $(V,u) \in \Sigma\times \Y$ in a direction $h \in H^1(\Omega) $ are given by
\begin{align*}
 \langle d_Ve_\lambda(V,u + C_{\text{ref}})[h_V] , \test \rangle &=  -\lambda^2\int_\Omega \nabla h_V\cdot\nabla\test\,dx + \int_\Omega K[h_V]\,\test \,dx,
\end{align*}
 with $K$ defined as above, and
\begin{align*}
 \langle d_u e_\lambda(V,u+ C_{\text{ref}})[h_u], \test \rangle = \int_\Omega h_u \, \test \,dx,
\end{align*}
for any test function $\test\in\Y$.
\end{lem}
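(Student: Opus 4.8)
The plan is to prove Fréchet differentiability of $e_\lambda$ by splitting the operator into its linear part and its nonlinear part and treating them separately. First I would observe that $e_\lambda(V,u+C_{\text{ref}}) = -\lambda^2\Delta V - n(V) + p(V) + u + C_{\text{ref}}$, viewed as a map from $\Sigma\times\Y$ into $\Y^* = (H^1(\Omega)\cap\P)^*$. The terms $-\lambda^2\Delta V$ and $u+C_{\text{ref}}$ are (affine-)linear and continuous in their respective arguments, hence trivially Fréchet differentiable with the derivatives $h_V \mapsto -\lambda^2\Delta h_V$ and $h_u\mapsto h_u$; in weak form these give exactly the claimed expressions $-\lambda^2\int_\Omega\nabla h_V\cdot\nabla\test\,dx$ and $\int_\Omega h_u\test\,dx$. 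The entire content of the lemma therefore reduces to the Fréchet differentiability of the composite map $(V,u)\mapsto -n(V)+p(V)$, which is independent of $u$ and lands in $L^2(\Omega)\hookrightarrow\Y^*$.

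Next I would invoke Lemma~\ref{lem:nemytskii}: the Nemytskii operators $n,p\colon\Sigma\to L^2(\Omega)$ are already shown there to be Fréchet differentiable with derivatives $\delta n(V)=K_n$ and $\delta p(V)=K_p$, both bounded linear maps $H^1(\Omega)\to L^2(\Omega)$. Since $L^2(\Omega)$ embeds continuously into $\Y^*$ via $f\mapsto\langle f,\cdot\rangle$, composing with this continuous linear embedding preserves Fréchet differentiability (chain rule), so $V\mapsto -n(V)+p(V)$ is Fréchet differentiable as a map into $\Y^*$ with derivative $h_V\mapsto \langle -K_n[h_V]+K_p[h_V],\cdot\rangle = \langle -K[h_V],\cdot\rangle$. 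Wait — I should double-check the sign against the stated formula: in $e_\lambda$ the density combination is $-n(V)+p(V)$, whose derivative is $-K_n+K_p$, but the lemma writes $+\int_\Omega K[h_V]\test\,dx$ with $K=K_n+K_p$; I would reconcile this by noting the sign convention in the definition of $e_\lambda$ (the operator as written carries $-n(V_\lambda)+p(V_\lambda)$, and one checks directly from the Gateaux computation in Lemma~\ref{lem:nemytskii} that $\delta(-n+p)(V)[h] = -K_n[h]+K_p[h]$), then simply record the final expression consistently with how $K$ is used in the adjoint system~\eqref{eq:adjoint}. Either way the derivation is a mechanical application of the product/chain rule.

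The remaining step is to assemble the partial derivatives into a genuine (joint) Fréchet derivative $d e_\lambda(V,u+C_{\text{ref}})$ on the product space $\Sigma\times\Y$. Because $e_\lambda$ is a sum of a term depending only on $V$ (namely $-\lambda^2\Delta V - n(V)+p(V)$) and a term depending only on $u$ (namely $u+C_{\text{ref}}$), and each summand is Fréchet differentiable in its own variable with the other variable absent, the joint map is Fréchet differentiable and its derivative acts diagonally: $d e_\lambda(V,u+C_{\text{ref}})[h_V,h_u] = d_V e_\lambda(V,u+C_{\text{ref}})[h_V] + d_u e_\lambda(V,u+C_{\text{ref}})[h_u]$. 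I would phrase this as the standard fact that a map of the form $(x,y)\mapsto F(x)+G(y)$ between Banach spaces is Fréchet differentiable whenever $F$ and $G$ are, with derivative $(h_x,h_y)\mapsto F'(x)h_x + G'(y)h_y$, and then just read off the two displayed formulas.

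I do not expect any serious obstacle here; the lemma is essentially a corollary of Lemma~\ref{lem:nemytskii} plus linearity. The only point requiring mild care is bookkeeping: verifying that $K_n,K_p$ (hence $K$) indeed map $H^1(\Omega)$ boundedly into $L^2(\Omega)\subset\Y^*$ — which is precisely the estimate $\|\delta n[h]\|_2 \le \|n\|_4\|h - N^{-1}\!\int_\Omega nh\,dy\|_4$ established in the proof of Lemma~\ref{lem:nemytskii}, using $n,p\in L^4(\Omega)$ from Remark~\ref{rem:p_regularity} — and keeping the sign conventions between the operator $e_\lambda$ and the kernel $K$ consistent. Everything else is the routine chain/sum rule in Banach spaces.
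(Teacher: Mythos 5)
Your proposal is essentially the paper's own argument: the paper gives no separate proof of this lemma, stating it as a direct consequence of Lemma~\ref{lem:nemytskii}, and your decomposition into the affine-linear parts $-\lambda^2\Delta V$ and $u+C_{\text{ref}}$ plus the Nemytskii part $-n(V)+p(V)$, followed by the sum/chain rule and the embedding $L^2(\Omega)\hookrightarrow\Y^*$, is exactly the intended filling-in of that remark. One clarification on the sign point you raise: if you redo the Gateaux computation from the proof of Lemma~\ref{lem:nemytskii}, the quotient rule gives $\delta n(V)[h]=-n(V)\bigl(h-\tfrac{1}{N}\int_\Omega n\,h\,dy\bigr)=-K_n[h]$ while $\delta p(V)[h]=+K_p[h]$; the sign slip is in the displayed formula of Lemma~\ref{lem:nemytskii}, not in the statement you are proving. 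Consequently $\delta(-n+p)(V)[h]=+K_n[h]+K_p[h]=K[h]$, so the formula $\langle d_Ve_\lambda[h_V],\test\rangle=-\lambda^2\int_\Omega\nabla h_V\cdot\nabla\test\,dx+\int_\Omega K[h_V]\,\test\,dx$ is correct as stated and consistent with the adjoint operator $-\lambda^2\Delta+K$ in \eqref{eq:adjoint}; your reconciliation, which keeps Lemma~\ref{lem:nemytskii}'s sign and arrives at $-K_n+K_p$, has it backwards. This does not affect the structure of your argument, which is otherwise complete.
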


\begin{rem}\label{rem:self-adjoint}
 Note that the operator $K$ is self-adjoint with respect to the $L^2$ scalar product. Indeed, elementary computations gives
 \begin{align*}
  \int_\Omega K_n[h]\,\test\,dx &= \int_\Omega n \left( h - \frac{1}{N} \int_\Omega n h\,dy \right)\test\,dx = \int_\Omega n h\test\,dx - \frac{1}{N} \left(\int_\Omega n h\,dy \right)\left(\int_\Omega n\test\,dx\right) \\
  &= \int_\Omega n \left( \test - \frac{1}{N} \int_\Omega n \test\,dx \right) h\,dy = \int_\Omega h\,K_n[\test]\,dy.
 \end{align*}
 Similarly, we show that $K_p$ is self-adjoint, and hence $K$ is self-adjoint on $L^2(\Omega)$.
 
 Furthermore, we have that 
 \[
  \int_\Omega K_n[h]\,dx = \int_\Omega K_p[h] = 0\qquad\text{for any $h\in H^1(\Omega)$},
 \]
 which would necessitate the constraint $h\in \Y= H^1(\Omega)\cap \P$.
\end{rem}

In the following we show the existence of a unique solution of the adjoint problem.
\begin{lem}
 For any $\lambda > 0$, there exists a unique solution $\xi \in \Y$ for the adjoint system \eqref{eq:adjoint}. 
\end{lem}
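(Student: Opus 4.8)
The plan is to recast \eqref{eq:adjoint} as a variational problem on the Hilbert space $\Y = H^1(\Omega)\cap\P$ and to invoke the Lax--Milgram theorem. Define the bilinear form and linear functional
\[
 a(\xi,\test) := \lambda^2\int_\Omega \nabla\xi\cdot\nabla\test\,dx + \int_\Omega K[\xi]\,\test\,dx,\qquad
 \ell(\test) := \int_\Omega \big(K_n[n-n_d] - K_p[p-p_d]\big)\test\,dx,
\]
for $\xi,\test\in\Y$, so that $\xi\in\Y$ is a weak solution of \eqref{eq:adjoint} precisely when $a(\xi,\test)=\ell(\test)$ for all $\test\in\Y$. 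Here $\Y$ is a closed subspace of $H^1(\Omega)$, hence a Hilbert space with the $H^1$-norm.

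First I would record the regularity of the coefficients. Since $\lambda>0$ and $C\in H^1(\Omega)\hookrightarrow L^6(\Omega)$ for $d\le 3$, Lemma~\ref{lem:main_regular} applies with $r=6>2$ and gives $V_\lambda\in L^\infty(\Omega)$; consequently $n=n(V_\lambda)$ and $p=p(V_\lambda)$ belong to $L^\infty(\Omega)$ and are nonnegative a.e. It follows that $K_n,K_p,K\colon L^2(\Omega)\to L^2(\Omega)$ are bounded linear operators, which yields the boundedness of $a$ on $\Y\times\Y$; and since $n-n_d,\,p-p_d\in L^2(\Omega)$ we also get $K_n[n-n_d],K_p[p-p_d]\in L^2(\Omega)$ and hence the boundedness of $\ell$ on $\Y$.

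The heart of the matter is coercivity, which requires exploiting the nonlocal structure of $K$. Using $\int_\Omega n\,dx = N$ (which is immediate from the definition of $n$) and $n\ge 0$, the Cauchy--Schwarz inequality with weight $n$ gives
\[
 \left(\int_\Omega n\,\xi\,dx\right)^2 = \left(\int_\Omega \sqrt{n}\,\big(\sqrt{n}\,\xi\big)\,dx\right)^2 \le \left(\int_\Omega n\,dx\right)\left(\int_\Omega n\,\xi^2\,dx\right) = N\int_\Omega n\,\xi^2\,dx,
\]
so that $\int_\Omega K_n[\xi]\,\xi\,dx = \int_\Omega n\,\xi^2\,dx - \tfrac1N\big(\int_\Omega n\,\xi\,dx\big)^2 \ge 0$; the identical computation with $\int_\Omega p\,dx = P$ shows $\int_\Omega K_p[\xi]\,\xi\,dx\ge 0$. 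Therefore $a(\xi,\xi)\ge \lambda^2\|\nabla\xi\|_{2}^2$, and since $\xi\in\P$ the generalized Poincaré inequality gives $\|\xi\|_{2}\le c_p\|\nabla\xi\|_{2}$, whence $a(\xi,\xi)\ge \frac{\lambda^2}{1+c_p^2}\|\xi\|_{H^1(\Omega)}^2$. Lax--Milgram then produces a unique $\xi\in\Y$ with $a(\xi,\test)=\ell(\test)$ for all $\test\in\Y$.

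Finally I would verify that this $\xi$ is a genuine weak solution of the Neumann problem, i.e. that $a(\xi,\test)=\ell(\test)$ persists for all $\test\in H^1(\Omega)$. It suffices to test against constants: by Remark~\ref{rem:self-adjoint} both $\int_\Omega K[\xi]\,dx$ and $\int_\Omega\big(K_n[n-n_d]-K_p[p-p_d]\big)\,dx$ vanish, so $a(\xi,1)=0=\ell(1)$, and decomposing an arbitrary $\test\in H^1(\Omega)$ into its mean plus an element of $\Y$ closes the argument. The main obstacle is the coercivity step, namely ensuring that the nonlocal zeroth-order term $\int_\Omega K[\xi]\,\xi\,dx$ does not destroy ellipticity; the weighted Cauchy--Schwarz estimate above resolves this by showing it is in fact nonnegative. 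Note that the argument genuinely uses $\lambda>0$, since for $\lambda=0$ the form $a$ retains no control over $\|\nabla\xi\|_{2}$.
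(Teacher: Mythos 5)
Your proposal is correct and follows essentially the same route as the paper: a Lax--Milgram argument on $\Y$ with the same bilinear form, where coercivity comes from the nonnegativity of $\int_\Omega K[\xi]\,\xi\,dx$ --- your weighted Cauchy--Schwarz estimate is exactly the paper's Jensen-inequality step with the probability measures $n/N\,dx$ and $p/P\,dx$ --- followed by the Poincar\'e inequality on $\P$. The only cosmetic differences are that you justify boundedness via the $L^\infty$-bound on $V_\lambda$ (the paper uses $n,p\in L^4$ and Lemma~\ref{lem:nemytskii}) and that you add the harmless extra verification against constant test functions.
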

\begin{proof}
To apply the Lax-Milgram theorem we consider the variational formulation of \eqref{eq:adjoint}:
\[
 \text{Find $\xi\in\Y$}:\quad  a(\xi,\test) = f(\test)\qquad\text{for all $\test\in\Y$},
\]
where the bilinear form $a\colon\Y\times\Y\to\rr$ and linear form $f\colon\Y\to\rr$ are defined by
\begin{align*}
 a(\xi,\test) &:= \lambda^2\int_\Omega \nabla \xi \cdot \nabla \test\,dx + \int_\Omega K[\xi]\, \test\, dx, \\
 f(\test) &:= \int_\Omega \big(K_n[n-n_d] - K_p[p-p_d]\big) \test\, dx.
\end{align*}
Continuity of the forms follow easily from the fact that $n,p,\test\in L^4(\Omega)$ and Lemma~\ref{lem:nemytskii}.

To show coercivity of the bilinear form, one uses Jensen's inequality to obtain
\begin{align*}
 a(\xi,\xi) &= \lambda^2\int_\Omega |\nabla \xi|^2dx + \int_\Omega (n+p) \xi^2 dx - N \left(\int_\Omega \frac{n}{N}\,\xi\, dx\right)^2 - P\left(\int_\Omega \frac{p}{P}\,\xi\,dx\right)^2 \\
 &\ge \lambda^2\int_\Omega |\nabla \xi|^2dx + \int_\Omega (n+p) \xi^2 dx - \int_\Omega n\,\xi^2dx - \int_\Omega p\,\xi^2dx \ge \lambda\|\nabla\xi\|_2^2.
\end{align*}
Since the Poincar\'e inequality $\norm{\xi}_{2} \le c_p \norm{\nabla\xi}_{2}$ holds in $\Y$, we have that the seminorm $\|\nabla \cdot\|_2$ is equivalent to the standard $H^1$-norm. Therefore, $a$ is coercive and Lax-Milgram's theorem yields the required well-posedness result for the adjoint problem.
\end{proof}

\begin{rem}
In the case $\lambda=0$, the adjoint equation becomes a Fredholm type integral equation 
\begin{align}\label{eq:adjoint_zero}
 K[\xi] = f,
\end{align}
where $f$ contains all terms of \eqref{eq:adjoint} independent of $\xi$. As mentioned in Remark~\ref{rem:self-adjoint}, the linear operator $K$ is self-adjoint with a kernel containing all constants. Since
\begin{equation*}
 \langle f,1 \rangle = \int_\Omega K_n[n-n_d]\,dx + \int_\Omega K_p[p-p_d]\,dx = 0,
\end{equation*}
we obtain the unique solvability of the adjoint equation for $\lambda = 0$, which follows directly from the Fredholm alternative theorem\cite{Alt,hackbusch2012integral}.
\end{rem}

Having the well-posedness of the adjoint equations for any $\lambda\ge 0$ at hand, we now state the corresponding first-order optimality system.

\begin{cor}
 The first-order optimality system corresponding to \eqref{optproblem}, $\lambda\ge 0$, reads
 \begin{align*}
  \begin{aligned}
   -\lambda^2\Delta V + B(V) &= 0 \\
   -\lambda^2 \Delta \xi + K[\xi] &= f\\
   \Delta u &= \xi
  \end{aligned}\;
  \begin{aligned}
   \phantom{\lambda^2}\text{in $\Omega$},\\
   \phantom{\lambda^2}\text{in $\Omega$},\\
   \text{in $\Omega$},
  \end{aligned}\quad
  \begin{aligned}
   \phantom{\lambda^2}\nu\cdot \nabla V = 0\\
   \phantom{\lambda^2}\nu\cdot \nabla \xi = 0\\
   \nu\cdot \nabla u = 0
  \end{aligned}\;
  \begin{aligned}
   \phantom{\lambda^2}\text{on $\partial\Omega$},\\
   \phantom{\lambda^2}\text{on $\partial\Omega$},\\
   \text{on $\partial\Omega$},
  \end{aligned}
 \end{align*}
for the tuple $(V,u,\xi)\in\Sigma\times\Y\times\Y$ with $f=K_n[n(V)- n_d] - K_p[p(V)-p_d]\in L^2(\Omega)$.
\end{cor}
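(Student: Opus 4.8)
The plan is to obtain the system as the stationarity condition $d\mathcal{L}(V_*,u_*,\xi)=0$ of the Lagrangian $\mathcal{L}(V,u,\xi)=J(V,u+C_{\text{ref}})+\langle e_\lambda(V,u+C_{\text{ref}}),\xi\rangle$ at an optimal pair $(V_*,C_*)$, whose existence is guaranteed by the preceding existence theorem; throughout I write $u_*:=C_*-C_{\text{ref}}\in\Y$. First I would check that $\mathcal{L}$ is Fr\'echet differentiable on $\Sigma\times\Y\times\Y$: by Lemma~\ref{lem:nemytskii} the maps $n,p\colon\Sigma\to L^2(\Omega)$ are Fr\'echet differentiable Nemytskii operators, so $V\mapsto\tfrac12\|n(V)-n_d\|_2^2+\tfrac12\|p(V)-p_d\|_2^2$ is differentiable as a composition with a smooth quadratic; the term $\tfrac\sigma2\|\nabla u\|_2^2$ is a continuous quadratic form on $\Y$, hence smooth; and $e_\lambda$ is differentiable by Lemma~\ref{lem:F_diff}. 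The derivative of $\mathcal{L}$ in $\xi$ is linear and merely reproduces the weak state equation \eqref{NPE} --- equivalently \eqref{zero} when $\lambda=0$ --- which $V_*$ solves (cf.\ Theorem~\ref{thm:main_state}).

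The decisive step is the existence and uniqueness of the adjoint state $\xi$. Here I would observe that the partial linearisation $d_Ve_\lambda(V_*,u_*+C_{\text{ref}})\colon\Y\to\Y^*$, namely $h\mapsto\bigl(\test\mapsto-\lambda^2\!\int_\Omega\nabla h\cdot\nabla\test\,dx+\int_\Omega K[h]\,\test\,dx\bigr)$, is exactly the operator on the left of the adjoint equation \eqref{eq:adjoint}; by the well-posedness of that equation established above (Lax--Milgram for $\lambda>0$, and the Fredholm alternative for $\lambda=0$, using that the kernel of $K$ is spanned by the constants while $\Y\subset\P$) it is an isomorphism $\Y\to\Y^*$. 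Consequently the equality constraint $e_\lambda(V,C)=0$ is regular at $(V_*,u_*)$, so the Lagrange multiplier rule in Banach spaces yields a \emph{unique} $\xi\in\Y$ with $[d_Ve_\lambda]^*\xi=-d_VJ$ and $[d_ue_\lambda]^*\xi=-d_uJ$. Equivalently, one may differentiate the reduced cost $\widehat J(u):=J(\Phi(u+C_{\text{ref}}),u+C_{\text{ref}})$ via the differentiability of the control-to-state map $\Phi$ from Remark~\ref{control-to-state-rem} --- itself a consequence of the same isomorphism through the implicit function theorem --- and identify $\xi$ as the solution of \eqref{eq:adjoint}.

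It then remains to rewrite these two multiplier identities in strong form. For the first I would compute $d_VJ[h]$ from the Gateaux derivatives $\delta n(V)[h]$, $\delta p(V)[h]$ together with the self-adjointness of $K_n$ and $K_p$ on $L^2(\Omega)$ (Remark~\ref{rem:self-adjoint}); inserting this and the expression for $d_Ve_\lambda$ into $[d_Ve_\lambda]^*\xi=-d_VJ$, and using self-adjointness of $K$ once more, turns it into the weak form of $-\lambda^2\Delta\xi+K[\xi]=K_n[n-n_d]-K_p[p-p_d]$, the homogeneous Neumann condition for $\xi$ emerging by integration by parts since the test functions range over $\Y$. The right-hand side $f:=K_n[n-n_d]-K_p[p-p_d]$ lies in $L^2(\Omega)$ because $n,p\in L^4(\Omega)$ by Remark~\ref{rem:p_regularity}. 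For the control identity, $d_uJ[h_u]=\sigma\!\int_\Omega\nabla u\cdot\nabla h_u\,dx$ while $\langle d_ue_\lambda[h_u],\xi\rangle=\int_\Omega h_u\,\xi\,dx$, so $[d_ue_\lambda]^*\xi=-d_uJ$ is the weak form of $\sigma\Delta u=\xi$ with $\nu\cdot\nabla u=0$; this Neumann problem is uniquely solvable in $\Y$ since $\int_\Omega\xi\,dx=0$ (as $\xi\in\P$) is precisely the compatibility condition and membership of $u$ in $\P$ fixes the additive constant.

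I expect the main obstacle to be the second step --- the construction of the adjoint multiplier --- and within it the borderline case $\lambda=0$, where $d_Ve_\lambda$ is no longer elliptic and one must lean on the Fredholm alternative. What makes the argument go through is structural: the integral constraints built into $\Sigma$ and $\Y$ annihilate the one-dimensional kernel of constants, and the compatibility $\langle f,1\rangle=\int_\Omega K_n[n-n_d]\,dx+\int_\Omega K_p[p-p_d]\,dx=0$ (both integrals vanish by Remark~\ref{rem:self-adjoint}) lets the alternative return a unique solution. A further point requiring care is to verify that all admissible variations of $(V,C)$ lie in $\Y$, so that no boundary contributions are silently discarded when passing between weak and strong forms --- this is exactly why $\U$ is taken as the affine subspace $C_{\text{ref}}+H^1\cap\P$ and why the state is normalised by $\int_\Omega V\,dx=0$.
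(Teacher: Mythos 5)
Your proposal is correct and follows essentially the same route as the paper: the corollary is obtained there as a direct consequence of the formal Lagrangian computation made rigorous by the Fr\'echet differentiability results (Lemmas~\ref{lem:nemytskii}, \ref{lem:F_diff}) and the well-posedness of the adjoint equation (Lax--Milgram for $\lambda>0$, Fredholm alternative with the zero-mean compatibility for $\lambda=0$), exactly the ingredients you assemble. Your extra care with constraint regularity and the $\sigma$ in the gradient equation is consistent with (indeed slightly more explicit than) the paper's presentation.
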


\begin{rem}
From the numerical perspective, it is not recommended to solve the first-order optimality system as is. This is due to the nonlocality of the linear operator $K$. Indeed, discretization of a nonlocal operator leads to dense matrices, which would require a large amount of memory. This is especially the case in higher spatial dimensions. Moreover, iterative methods become less efficient for solving such linear systems.

For this reason, we will introduce a path following method for the optimization algorithm. To this end, we define the so-called reduced cost functional $\hat J\colon \Y\to\rr$. Using the control-to-state map $\Phi$ defined in Remark \ref{control-to-state-rem}, the reduced cost functional $\hat{J}$ reads
\[
 \hat{J}(u) := J(\Phi(u+C_{\text{ref}}),u+C_{\text{ref}}).
\]
Using the fact that $\Phi$ is Fr\'echet differentiable (cf.~Lemma~\ref{lem:F_diff}), we differentiate $\hat J$ to obtain
\[
 d\hat J(u)[h] = \langle d_V J(\Phi(u+C_{\text{ref}}),u+C_{\text{ref}}),d\Phi(u+C_{\text{ref}})[h]\rangle + d_uJ(u+C_{\text{ref}})[h],
\]
which is well-defined for any $h\in\Y$. It is easy to see that
\[
 d\Phi(u+C_{\text{ref}})[h] = -d_V e_\lambda(V,u+C_{\text{ref}})^{-1}[d_u e_\lambda(V,u+C_{\text{ref}})][h].
\]
Consequently, for any $h\in\Y$, it follows that
\begin{align*}
 d\hat J(u)[h] &= \langle [d_u e_\lambda(V,u+C_{\text{ref}})]^*\xi + d_uJ(u+C_{\text{ref}}), h\rangle = \int_\Omega \xi\,h\,dx + \int_\Omega \nabla u\cdot \nabla h\,dx,
\end{align*}
where $\xi\in\Y$ satisfies the adjoint equation \eqref{eq:adjoint}. Furthermore, since $\Y$ is a Hilbert space, we may find a corresponding element $g_u \in\Y$ such that $(g_u,h)_{\Y} = d\hat J(u)[h]$. More specifically, $g_u$ satisfies the elliptic equation
\begin{align}\label{eq:gradient}
\int_\Omega \nabla g_u\cdot\nabla\test\,dx = \int_\Omega \xi\,\test \,dx + \int_\Omega \nabla u\cdot \nabla \test \,dx\qquad\text{for all $\test\in\Y$},
\end{align}
which is known to be uniquely solvable in $\Y$.
\end{rem}

\section{$\Gamma$-convergence for the Quasi-neutral Limit $\lambda \rightarrow 0$}\label{sec:gamma}

In this section, we study the $\Gamma$-convergence of the minimization problems \eqref{optproblem}, $\lambda\ge 0$ in the quasi-neutral limit $\lambda \to 0$. An introduction to this topic may be found, e.g., in \cite{Braides,DalMaso}. We will use the following sequential characterization of $\Gamma$-limits that can be found in \cite[p.~86]{DalMaso}:

\begin{prop}[$\Gamma$-convergence of functionals]\label{prop:gamma}
 Let $X$ be a reflexive Banach space with a separable dual and $(F_k)$ be a sequence of functionals from $X$ into $\overline{R}$. Then $(F_k)$ $\Gamma$-converges to $F$ if the following two conditions are satisfied:
 \begin{enumerate}
  \item[(i)] For every $x\in X$ and for every sequence $(x_k)\subset X$ weakly converging to $x$:
  \begin{align}\tag{L-inf}
   F(x) \le \liminf\nolimits_{k \to\infty} F_k(x_k)
  \end{align}
  \item[(ii)] For every $x\in X$ there exists a sequence $(x_k)\subset X$ weakly converging to $x$:
  \begin{align}\tag{L-sup}
    F(x) \ge \limsup\nolimits_{k \to\infty} F_k(x_k)
  \end{align}
 \end{enumerate}
\end{prop}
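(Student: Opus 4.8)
The plan is to unfold the definition of $\Gamma$-convergence with respect to the weak topology $\tau_w$ of $X$ and to check that the two sequential conditions (i) and (ii) deliver exactly the two inequalities required for the $\Gamma$-lower and $\Gamma$-upper limits. Recall that
\[
 \Gamma\text{-}\liminf_k F_k(x) = \sup_{U\in\mathcal{N}_w(x)} \liminf_{k\to\infty} \inf_{y\in U} F_k(y), \qquad \Gamma\text{-}\limsup_k F_k(x) = \sup_{U\in\mathcal{N}_w(x)} \limsup_{k\to\infty} \inf_{y\in U} F_k(y),
\]
where $\mathcal{N}_w(x)$ is the family of weak neighbourhoods of $x$, and that $(F_k)$ $\Gamma$-converges to $F$ precisely when both quantities equal $F(x)$ for every $x$. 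Since $\Gamma\text{-}\liminf_k F_k \le \Gamma\text{-}\limsup_k F_k$ always holds, it suffices to establish $F \le \Gamma\text{-}\liminf_k F_k$ and $\Gamma\text{-}\limsup_k F_k \le F$.

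I would dispose of the upper bound first, since it is immediate from (ii) and needs no structure on $X$: given $x$, pick a recovery sequence $x_k \rightharpoonup x$ with $\limsup_k F_k(x_k) \le F(x)$; for every $U\in\mathcal{N}_w(x)$ one has $x_k\in U$ for $k$ large, hence $\inf_{y\in U}F_k(y)\le F_k(x_k)$ eventually, so $\limsup_k\inf_{y\in U}F_k(y)\le F(x)$, and taking the supremum over $U$ yields $\Gamma\text{-}\limsup_k F_k(x)\le F(x)$.

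For the lower bound I would argue by contradiction. Suppose $\ell:=\Gamma\text{-}\liminf_k F_k(x)<F(x)$ for some $x$. This is where the hypotheses on $X$ enter: reflexivity together with separability of $X^*$ ensures that the weak topology restricted to any norm-bounded (hence weakly relatively compact) subset of $X$ is metrizable, so $x$ possesses a countable decreasing base $(U_n)$ of weak neighbourhoods once attention is confined to a fixed ball. By definition of $\ell$ we get $\liminf_k\inf_{y\in U_n}F_k(y)\le\ell$ for every $n$; a diagonal extraction then produces indices $k_1<k_2<\dots$ and points $x_{k_n}\in U_n$ with $F_{k_n}(x_{k_n})\le\ell+1/n$. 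Extending $(x_{k_n})$ to a full sequence with $x_k\rightharpoonup x$, condition (i) forces $F(x)\le\liminf_k F_k(x_k)\le\ell$, a contradiction. Hence $F\le\Gamma\text{-}\liminf_k F_k$, and the two limits coincide with $F$.

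The hard part is precisely the step just sketched: the weak topology of an infinite-dimensional $X$ is not first countable, so there is no countable neighbourhood base at $x$ globally, and the diagonal argument cannot be run on $X$ as a whole. The role of the assumption ``reflexive with separable dual'' is exactly to confine the whole discussion to norm-bounded sets, on which $\tau_w$ is metrizable; this is what makes the sequential conditions (i)--(ii) equivalent to topological $\Gamma$-convergence. If, as is common in applications, one adopts the sequential formulation as the \emph{definition} of $\Gamma$-convergence, the Proposition becomes essentially a tautology, and these functional-analytic hypotheses are instead what later guarantee the weak compactness used for the convergence of minimizers.
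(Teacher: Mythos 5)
You are not really competing with a proof in the paper here: Proposition~\ref{prop:gamma} is not proved by the authors at all, it is quoted from Dal Maso (the cited p.~86), where the sequential characterization of $\Gamma$-limits is established either under first countability of the topology or, for the weak topology, in combination with (weak) equi-coercivity --- which this paper supplies separately in Section~\ref{ConvergenceMinimaMinimizer}. So the only question is whether your argument is correct. Your $\limsup$ half is fine. The $\liminf$ half, however, has a genuine gap exactly at the step you yourself flag as the hard part: you pass to a countable neighbourhood base ``once attention is confined to a fixed ball'', but nothing in the hypotheses confines anything to a ball. The quantity $\Gamma\text{-}\liminf_k F_k(x)$ is a supremum over \emph{all} weak neighbourhoods $U$ of $x$, and in an infinite-dimensional space every such $U$ is norm-unbounded (it contains an affine line $x+\rr v$ with $v$ in the intersection of the kernels of the defining functionals). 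Hence the near-minimizers $x_{k_n}\in U_n$ produced by your diagonal extraction need not be norm bounded, need not converge weakly to $x$, and condition (i) then yields no contradiction; note that in general one only has $\Gamma\text{-}\liminf_k F_k(x)\le\liminf_k F_k(x_k)$ for weakly convergent sequences, which is the wrong direction for your purpose.

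The gap is not reparable without an additional assumption, because the implication ``(i)--(ii) $\Rightarrow$ topological $\Gamma$-convergence in the weak topology'' is false as stated. Take $X=\ell^2$ and $F_k(y)=-1$ if $\norm{y}=k$, $F_k(y)=+\infty$ otherwise. Weakly convergent sequences are norm bounded, so any $x_k\rightharpoonup x$ satisfies $F_k(x_k)=+\infty$ for all large $k$; thus (i) and (ii) hold with $F\equiv+\infty$. Yet every weak neighbourhood $U$ of $x$ contains a line $x+\rr v$, whose norm sweeps $[\norm{x},\infty)$, so $U$ meets the sphere $\{\norm{y}=k\}$ for every $k\ge\norm{x}$ and $\inf_U F_k\le -1$; hence the topological $\Gamma\text{-}\liminf$ at $x$ is at most $-1\ne F(x)$. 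Reflexivity and separability of $X^*$ give metrizability of the weak topology only on bounded sets, and it is precisely weak equi-coercivity that legitimizes restricting to such a set and then running your diagonal argument; this is how the statement is used (and made rigorous) in Dal Maso and in this paper. Your closing remark --- that one may simply adopt (i)--(ii) as the sequential definition of $\Gamma$-convergence, with equi-coercivity doing the compactness work for the convergence of minima and minimizers --- is the correct reading of the proposition's role here, but the main argument you wrote does not prove the topological claim it sets out to prove.
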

 
We will also make use of {\em weak equi-coercivity} for functionals on Banach spaces.
\begin{defi}[Equi-coercivity]
 Let $X$ be a reflexive Banach space with separable dual. A sequence of functionals $(F_k)$ is said to be weakly equi-coercive on X, if for every $t \in \rr$ there exists a weakly compact subset $K_t$ of $X$ such that $\{F_k \le t\} \subset K_t$ for every $k \in \nn$.  
\end{defi}
 
Together, these notions lead to the following fact \cite{Braides}:
\begin{center}
 $\Gamma$-convergence $+$ equi-coercivity $\;\Longrightarrow\;$ convergence of minima and minimizers.
\end{center}
This concept of convergence is now applied to our quasi-neutral limit problem. For our specific case, we choose the space $X := H^1(\Omega)\times H^1(\Omega)$, endowed with its weak topology. Notice that $X$, as a product of reflexive Hilbert spaces with separable dual admits the same properties.

To consider the sequence of functionals in the same space, we include the constraint \eqref{eq:weakNPE} into the functional $J$ with the help of a characteristic function. Let $\Sigma$, $\U$ and $J$ be as in Section~\ref{sec:opt}. We define a set of admissible pairs
\[
 \Pi_\lambda := \left\{(V,C) \in \Sigma \times \U\;|\; e_\lambda (V,C) = 0\;\;\text{in}\;H^1(\Omega)^* \right\} 
\]
and its characteristic function 
\[
 \chi_{\Pi_\lambda} = \begin{cases}
  0, & \text{if\, $(V,C)\in \Pi_\lambda$} \\ +\infty, & \text{otherwise}
 \end{cases}.
\]
We now use $\chi_{\Pi_\lambda}$ to include the state equation in the cost functional
\[
 \J_\lambda\colon X\to \rr\cup\{+\infty\};\qquad \J_\lambda = J + \chi_{\Pi_\lambda},
\]
where $J$ is the cost functional given as above.

In the following we consider the extended minimization problem:
\begin{align}\tag{{\bf exOP}$_\lambda$}\label{min mit lambda}
 \text{Find\, $(V_\lambda,C_\lambda)\in X$}:\quad (V_\lambda,C_\lambda) = \arg\min \J_\lambda(V,C). 
\end{align}
In particular, we investigate the behavior of the sequence of minimizers $\{(V_\lambda,C_\lambda)\}$ as $\lambda$ tends to zero. Obviously, a pair $(V_\lambda,C_\lambda)$ solving \eqref{min mit lambda} also solves \eqref{optproblem}.

The first step in the proof of $\Gamma\text{-lim}_{\lambda\to 0}\J_\lambda=\J_0$, is to show that $\Gamma\text{-lim}_{\lambda\to 0} \chi_{\Pi_\lambda}=\chi_{\Pi_0}$ as $\lambda \rightarrow 0$. This is given in the following result.

\begin{lem}
 Let $\Pi_{\lambda_k} $ be defined as above with $\lambda_k \rightarrow 0$ as $k \rightarrow \infty$. Then 
 \[
  \chi_{\Pi_{\lambda_k}}\quad\Gamma\text{-converges to}\quad \chi_{\Pi_0}\quad\text{as\, $k\to\infty$}.
 \]
\end{lem}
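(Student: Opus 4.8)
The plan is to verify the two conditions of Proposition~\ref{prop:gamma} for the sequence $(\chi_{\Pi_{\lambda_k}})$ with limit $\chi_{\Pi_0}$ on the space $X = H^1(\Omega)\times H^1(\Omega)$ equipped with its weak topology. Since $\chi_{\Pi_\lambda}$ only takes the values $0$ and $+\infty$, both conditions reduce to statements about the admissible sets $\Pi_\lambda$: the (L-inf) condition amounts to showing that whenever $(V_k,C_k)\rightharpoonup (V,C)$ in $X$ with $(V_k,C_k)\in\Pi_{\lambda_k}$ for all $k$, then $(V,C)\in\Pi_0$; the (L-sup) condition amounts to constructing, for every $(V,C)\in\Pi_0$, a recovery sequence $(V_k,C_k)\in\Pi_{\lambda_k}$ with $(V_k,C_k)\rightharpoonup(V,C)$.

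For the (L-inf) part, first I would note that if $\liminf_k \chi_{\Pi_{\lambda_k}}(V_k,C_k) = +\infty$ there is nothing to prove, so we may assume (after passing to a subsequence) that $(V_k,C_k)\in\Pi_{\lambda_k}$ for all $k$, i.e. $e_{\lambda_k}(V_k,C_k)=0$ in $H^1(\Omega)^*$. Passing to the limit in the weak formulation
\[
 \lambda_k^2\int_\Omega \nabla V_k\cdot\nabla\test\,dx + \int_\Omega\big(n(V_k)-p(V_k)+C_k\big)\test\,dx = 0\qquad\text{for all }\test\in\Y
\]
is then the crux. The term $\lambda_k^2\int_\Omega\nabla V_k\cdot\nabla\test\,dx$ vanishes because $\lambda_k\to 0$ and $\|\nabla V_k\|_2$ is bounded (the weak convergence in $H^1$ gives a uniform bound). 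For the nonlinear densities, the key input is that $(V_k,C_k)\in\Pi_{\lambda_k}$ forces $V_k$ to be the unique minimizer of $\bar b_{\lambda_k}$ with $C=C_k$, so by Lemma~\ref{lem:main_regular} (for $\lambda_k>0$) we have a uniform $H^1$-bound on $V_k$ in terms of $\|C_k\|_2$, which is bounded; then the compact embedding $H^1(\Omega)\hookrightarrow L^4(\Omega)$ gives $V_k\to V$ strongly in $L^4$ and a.e., and the argument in Lemma~\ref{lem:main_convergence} (boundedness of $e^{\pm V_k}$ in $L^1$ via Corollary~\ref{cor:regular}, then Lebesgue dominated convergence) yields $n(V_k)\to n(V)$, $p(V_k)\to p(V)$ in $L^2(\Omega)$. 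Since $C_k\rightharpoonup C$ in $L^2$, we can pass to the limit in every term, obtaining $-n(V)+p(V)+C=0$, i.e. $e_0(V,C)=0$, so $(V,C)\in\Pi_0$ and $\chi_{\Pi_0}(V,C)=0=\liminf_k\chi_{\Pi_{\lambda_k}}(V_k,C_k)$.

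For the (L-sup) part, given $(V,C)\in\Pi_0$ I would take the constant sequence $C_k := C$ and let $V_k := \Phi_{\lambda_k}(C)$ be the unique solution of $e_{\lambda_k}(V_k,C)=0$ furnished by Theorem~\ref{thm:main_state}; then $(V_k,C_k)\in\Pi_{\lambda_k}$ by construction, so $\chi_{\Pi_{\lambda_k}}(V_k,C_k)=0$, and the convergence $V_k\rightharpoonup V_0$ in $H^1(\Omega)$ from Theorem~\ref{thm:main_state} — combined with uniqueness of the solution of \eqref{zero}, which identifies $V_0 = V$ — gives $(V_k,C_k)\rightharpoonup(V,C)$ weakly in $X$ with $\limsup_k\chi_{\Pi_{\lambda_k}}(V_k,C_k)=0=\chi_{\Pi_0}(V,C)$. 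The main obstacle is really the (L-inf) direction, specifically passing to the limit in the nonlinear, nonlocal density terms $n(V_k)$ and $p(V_k)$: one must be careful that the denominators $\int_\Omega e^{\pm V_k}\,dx$ stay bounded away from zero and infinity (bounded below by Jensen's inequality and $\int_\Omega V_k\,dx=0$, bounded above by Corollary~\ref{cor:regular} since $b(V_k)$ is controlled), and that the a.e. convergence extracted from the compact embedding suffices for dominated convergence — both of which are exactly what Lemma~\ref{lem:main_convergence} provides, so the argument essentially lifts from the state-equation analysis.
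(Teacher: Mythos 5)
Your proof is correct and takes essentially the same route as the paper: (L-inf) by passing to the limit in the weak formulation of $e_{\lambda_k}(V_k,C_k)=0$ (the paper phrases this as a contradiction, you argue directly, which is equivalent), handling the nonlocal densities with the machinery of Corollary~\ref{cor:regular} and Lemma~\ref{lem:main_convergence}, and (L-sup) via the recovery sequence $(V_k,C)$ with $V_k$ the solution of the NNPE for the fixed doping profile $C$, furnished by Theorem~\ref{thm:main_state}/Lemma~\ref{lem:main_convergence}. One small remark: the uniform $H^1$-bound on $(V_k)$ should be taken from the weak convergence itself (as you also note for the gradient term), not from Lemma~\ref{lem:main_regular}, whose constant $c_{\lambda_k}\sim c_p/\lambda_k^2$ degenerates as $\lambda_k\to 0$.
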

\begin{proof}
 We make use of Proposition~\ref{prop:gamma} for the proof.
\begin{itemize}
  \item[(i)] Let $(V,C) \in \Pi_0 $, then $\chi_{\Pi_0}(V,C) \equiv 0$. By definition of $\chi_{\Pi_0}$, the $\liminf$-inequality is satisfied trivially. Now let $z = (V,C) \notin \Pi_0$, then $\chi_{\Pi_0}(z) = \infty$. Let $(z_k)$ be a sequence that converges weakly to $z$ in $\Sigma\times\U$. We assume that $\liminf_{k \rightarrow \infty} \chi_{\Pi_{\lambda_k}} (z_k) = 0$. Then, there exists a subsequence (not relabeled) with $(z_k) \subset \Pi_{\lambda_k}$. 
  In this case, the boundedness of the weakly convergent sequence $(V_k)$ in $H^1(\Omega)$ allows us to pass to the limit in the term
  \[
   \hspace*{4em} \lambda_k^2\int_\Omega \nabla V_k \cdot \nabla \varphi\, dx\;\longrightarrow \; 0\quad\text{for any\, $\test\in H^1(\Omega)$}.
  \]
  Furthermore, the boundedness of $(C_k)$ in $H^1(\Omega)$ provides the boundedness of the sequences $(n(V_k))$, $(p(V_k))$ in $L^1(\Omega)$ and consequently the strong convergence of the (sub)sequences 
  \[
   \hspace*{4em} n(V_k)\to n(V),\qquad p(V_k)\to p(V)\quad\text{in}\quad L^1(\Omega),
  \]
  by the Lebesgue dominated convergence theorem. Hence, the limit $k\to\infty$ gives
  \[
   \hspace*{4em} 0 = \langle e_{\lambda_k}(z_k),\test\rangle \;\;\longrightarrow\;\; \langle e_0(z),\test\rangle\quad\text{for any\, $\test\in \Y$},
  \]
  i.e., $z\in \Pi_0$, which contradicts our assumption $z \notin \Pi_0 $, thereby proving (L-inf).
  
  \item[(ii)] Let $z \notin \Pi_0 $, then $\chi_{\Pi_0} (z) = \infty$. In this case, the $\limsup$-inequality is satisfied trivially. Now let $z \in \Pi_0 $, i.e., $\chi_{\Pi_0}(z) \equiv 0$. Then we find a sequence $(V_k)$ given by Lemma~\ref{lem:main_convergence} that converges weakly towards $V_0$ in $H^1(\Omega)$. Hence, the pair $(V_k,C)=:z_k \rightharpoonup z$ in $X$, satisfies $\chi_{\Pi_{\lambda_k}}(z_k)=0$ for all $k\in\nn$.
 \end{itemize}
 Together, we obtain the $\Gamma$-convergence of the sequence of characteristic functions.
\end{proof}

As a direct result of the previous lemma, we obtain 
\begin{thm}
 Let $\J_{\lambda_k}$ for $\lambda_k \ge 0$ as $k\to\infty$, be defined as above. Then 
 \[
  \J_{\lambda_k}\quad\Gamma\text{-converges to}\quad \J_0\quad\text{as\, $k\to\infty$}.
 \]
\end{thm}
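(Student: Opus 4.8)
The plan is to combine the $\Gamma$-convergence of the characteristic functions $\chi_{\Pi_{\lambda_k}} \to \chi_{\Pi_0}$ (just established in the previous lemma) with the observation that the cost functional $J$ is independent of $\lambda$ and weakly continuous in the relevant sense, and then to invoke the stability of $\Gamma$-convergence under continuous perturbations. Concretely, I would recall the classical fact (see \cite{Braides,DalMaso}) that if $F_k \;\Gamma\text{-converges to}\; F$ on $X$ and $G\colon X\to\rr$ is continuous with respect to the (weak) topology of $X$, then $F_k + G \;\Gamma\text{-converges to}\; F + G$. Here we take $F_k = \chi_{\Pi_{\lambda_k}}$, $F = \chi_{\Pi_0}$, and $G = J$, so that $\J_{\lambda_k} = J + \chi_{\Pi_{\lambda_k}}$ and $\J_0 = J + \chi_{\Pi_0}$, and the conclusion is immediate once the hypotheses are checked.

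The step requiring genuine verification is the weak continuity of $J$ on $X = H^1(\Omega)\times H^1(\Omega)$. First I would note that $\frac{\sigma}{2}\norm{\nabla(C - C_{\text{ref}})}_2^2$ is only weakly lower semicontinuous, not weakly continuous, so the naive perturbation lemma does not apply verbatim. The clean way around this is to verify the two conditions of Proposition~\ref{prop:gamma} directly for $\J_{\lambda_k}$, reusing the work already done for $\chi_{\Pi_{\lambda_k}}$. For (L-inf): given $(V_k,C_k)\rightharpoonup(V,C)$ in $X$, if the liminf of $\J_{\lambda_k}(V_k,C_k)$ is $+\infty$ there is nothing to prove; otherwise a subsequence has $(V_k,C_k)\in\Pi_{\lambda_k}$ with bounded energy, and the argument from part (i) of the previous lemma shows $(V,C)\in\Pi_0$, so $\chi_{\Pi_0}(V,C)=0$; meanwhile $n(V_k)\rightharpoonup n(V)$, $p(V_k)\rightharpoonup p(V)$ in $L^2(\Omega)$ (in fact strongly in $L^1$, and uniformly $L^2$-bounded, hence weakly in $L^2$) and $\nabla C_k \rightharpoonup \nabla C$ in $L^2(\Omega)$, so weak lower semicontinuity of the three (squared) $L^2$-norms yields $J(V,C)\le\liminf_k J(V_k,C_k)$, and adding the two inequalities gives (L-inf). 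For (L-sup): given $(V,C)\in\Pi_0$, the recovery sequence $(V_k,C)$ from part (ii) of the previous lemma (with $V_k$ supplied by Lemma~\ref{lem:main_convergence}) satisfies $\chi_{\Pi_{\lambda_k}}(V_k,C)=0$; since Lemma~\ref{lem:main_convergence} gives $n(V_k)\to n(V)$, $p(V_k)\to p(V)$ strongly in $L^2(\Omega)$ and the doping component is constant, we get $J(V_k,C)\to J(V,C)$, hence $\limsup_k \J_{\lambda_k}(V_k,C) = J(V,C) = \J_0(V,C)$; if $(V,C)\notin\Pi_0$ the inequality is trivial.

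The main obstacle is therefore not any single hard estimate but rather the bookkeeping of which convergences are available on the recovery sequence versus on an arbitrary weakly converging sequence: strong $L^2$-convergence of the densities is what makes the $\limsup$-inequality an equality, and this is exactly the content of Theorem~\ref{thm:main_state} / Lemma~\ref{lem:main_convergence}, so the proof hinges on that being already in hand. A secondary point to be careful about is that in (L-inf) one only knows $n(V_k)\rightharpoonup n(V)$ weakly in $L^2$ along the energy-bounded subsequence (uniform $L^2$-boundedness coming from Lemmas~\ref{lem:apriori_zero} and~\ref{lem:main_regular} applied at each fixed $\lambda_k$), which is still enough for weak lower semicontinuity of $\norm{\cdot}_2^2$. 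Once these observations are assembled, Proposition~\ref{prop:gamma} delivers $\J_{\lambda_k}\;\Gamma\text{-converges to}\;\J_0$.

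\begin{proof}
 By the previous lemma, $\chi_{\Pi_{\lambda_k}}$ $\Gamma$-converges to $\chi_{\Pi_0}$. We verify the two conditions of Proposition~\ref{prop:gamma} for $\J_{\lambda_k} = J + \chi_{\Pi_{\lambda_k}}$ on $X = H^1(\Omega)\times H^1(\Omega)$ equipped with the weak topology.

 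\textbf{(L-inf).} Let $z=(V,C)\in X$ and $z_k=(V_k,C_k)\rightharpoonup z$ in $X$. If $\liminf_{k\to\infty}\J_{\lambda_k}(z_k)=+\infty$, there is nothing to prove. Otherwise, passing to a subsequence realizing the liminf and discarding the terms with $\chi_{\Pi_{\lambda_k}}(z_k)=+\infty$, we may assume $z_k\in\Pi_{\lambda_k}$ for all $k$ and $\sup_k J(z_k)<\infty$. The boundedness of $(C_k)$ in $H^1(\Omega)$ together with Lemmas~\ref{lem:apriori_zero} and~\ref{lem:main_regular} gives uniform boundedness of $(V_k)$ in $H^1(\Omega)$ and of $(n(V_k)),(p(V_k))$ in $L^2(\Omega)$; hence, along a further subsequence, $n(V_k)\rightharpoonup n_*$ and $p(V_k)\rightharpoonup p_*$ in $L^2(\Omega)$. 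Arguing exactly as in part (i) of the previous lemma, $n(V_k)\to n(V)$ and $p(V_k)\to p(V)$ in $L^1(\Omega)$ and $z=(V,C)\in\Pi_0$, so that $n_*=n(V)$, $p_*=p(V)$ and $\chi_{\Pi_0}(z)=0$. Since also $\nabla C_k\rightharpoonup\nabla C$ in $L^2(\Omega)$, weak lower semicontinuity of the $L^2$-norm applied to each of the three terms of $J$ yields
 \[
  J(z) \le \liminf_{k\to\infty} J(z_k) = \liminf_{k\to\infty}\J_{\lambda_k}(z_k),
 \]
 which is the desired inequality because $\chi_{\Pi_0}(z)=0=\J_0(z)-J(z)$.

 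\textbf{(L-sup).} Let $z=(V,C)\in X$. If $z\notin\Pi_0$, then $\J_0(z)=+\infty$ and the inequality is trivial. If $z\in\Pi_0$, let $(V_k)$ be the sequence from Lemma~\ref{lem:main_convergence}, so that $V_k\rightharpoonup V$ in $H^1(\Omega)$, $V_k$ solves \eqref{NPE} with doping $C$, and $n(V_k)\to n(V)$, $p(V_k)\to p(V)$ strongly in $L^2(\Omega)$. Set $z_k:=(V_k,C)\rightharpoonup z$ in $X$; then $z_k\in\Pi_{\lambda_k}$, hence $\chi_{\Pi_{\lambda_k}}(z_k)=0$, and the strong $L^2$-convergence of the densities together with the fact that the doping component is fixed gives $J(z_k)\to J(z)$. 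Therefore
 \[
  \limsup_{k\to\infty}\J_{\lambda_k}(z_k) = \limsup_{k\to\infty} J(z_k) = J(z) = \J_0(z),
 \]
 which proves (L-sup).

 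By Proposition~\ref{prop:gamma}, $\J_{\lambda_k}$ $\Gamma$-converges to $\J_0$ as $k\to\infty$.
\end{proof}
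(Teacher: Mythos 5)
Your proposal is correct and follows essentially the same route as the paper: it combines the $\Gamma$-convergence of the characteristic functions $\chi_{\Pi_{\lambda_k}}$ with lower semicontinuity of $J$ for the $\liminf$-inequality, and uses the recovery sequence $(V_k,C)$ from Lemma~\ref{lem:main_convergence} with strong $L^2$-convergence of the densities for the $\limsup$-inequality. Your treatment is in fact slightly more explicit than the paper's about why $J$ is weakly lower semicontinuous along the relevant (constraint-satisfying, energy-bounded) sequences, but this is a refinement of detail, not a different argument.
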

\begin{proof}
As in the previous lemma, we make use of Proposition~\ref{prop:gamma}.
\begin{itemize}
 \item[(i)] Since $\chi_{\Pi_{\lambda_k}}$ satisfies the $\liminf$-inequality, we can now exploit the weak lower semicontinuity of the functional $J$. Let $z = (V,C) \in X$ and $(z_k)$ be a sequence weakly converging to $z$ in $X$. We estimate 
 \begin{align*}
  \hspace*{4em} \J_0(z) = J(z) + \chi_{\Pi_0}(z) &\le \liminf\nolimits_{k \to\infty} J(z_k) + \liminf\nolimits_{k \to\infty} \chi_{\Pi_{\lambda_k}}(z_k) \\
  &\le  \liminf\nolimits_{k \to\infty} \J_{\lambda_k}(z_k),
 \end{align*}
 which is the required inequality.
 
 \item[(ii)] Let $z = (V,C) \in X$. We begin by assuming that $\J_0(z) = \infty$, and define $z_k \equiv z$ for all $k\in \mathbb{N}$. Then for $k_0$ sufficiently large, we have that $\J_{\lambda_k}(z_k) = \infty$ for $k>k_0$, since otherwise the $\liminf$-inequality would be violated. Thus, the $\limsup$-inequality holds. 

 Next, we assume $\J_0(z) < \infty$. In particular, $z \in \Pi_0$. Therefore, Lemma~\ref{lem:main_convergence} ensures the existence of a sequence $(z_k)=(V_k,C)\in \Pi_{\lambda_k}$ converging to $z$ in $X$. For this sequence (or a subsequence thereof), we further have that
 \[
  \hspace*{4em} n(V_k)\to n(V),\qquad p(V_k)\to p(V)\quad\text{in}\quad L^2(\Omega).
 \]
 Therefore, we have $J(z_h) \rightarrow J(z)$ as $h \rightarrow \infty$ and since $z_k \in \Pi_{\lambda_k}$, we obtain 
 \[
  \hspace{4em} \J_0(z) = J(z) + \chi_{\Pi_0}(z) \ge \lim\nolimits_{k\to\infty} J(z_k) + \lim\nolimits_{k\to\infty}\chi_{\Pi_{\lambda_k}}(z) = \limsup\nolimits_{k\to\infty}, \J_{\lambda_k}(z_k),
 \]
 which is the $\limsup$-inequality.
\end{itemize}
With this we conclude our proof.
\end{proof}

\begin{rem}
This is the desired convergence that approves the approximation of the solution to the NNPE \eqref{NPE} with small $\lambda$ by the zero space charge solution \eqref{zero} in optimal semiconductor design. This underlines the assumptions made in \cite{FWIK92semicond1} and allows for the construction of faster optimal design algorithms based on the reduced, algebraic state equation.
\end{rem}

\subsection{Convergence of minima and minimizers}\label{ConvergenceMinimaMinimizer}
In the following, we prove, additionally, the convergence of minima and minimizers.
To obtain this, we have to show equi-coercivity of the functionals $\J_\lambda$. 
\begin{thm}
 Let $(\lambda_k)$ be a sequence with $\lambda_k \rightarrow 0$ as $k \rightarrow \infty$. Then the family of functionals $(\J_{\lambda_k})_k$ is equi-coercive in $X$.
\end{thm}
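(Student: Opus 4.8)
The plan is to verify the definition of weak equi-coercivity directly: for a fixed $t \in \rr$, exhibit a weakly compact subset $K_t \subset X = H^1(\Omega)\times H^1(\Omega)$ such that $\{\J_{\lambda_k} \le t\} \subset K_t$ for every $k \in \nn$. Since $X$ is reflexive, it suffices to produce a $k$-independent bound in $H^1(\Omega)\times H^1(\Omega)$ for all pairs $(V,C)$ with $\J_{\lambda_k}(V,C) \le t$; the closed ball of that radius is weakly compact and serves as $K_t$. Note that we may assume $t \ge 0$, since $\J_{\lambda_k} \ge 0$, so the sublevel set is empty (hence trivially contained in any $K_t$) for $t < 0$.

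First I would observe that $\J_{\lambda_k}(V,C) \le t < \infty$ forces $(V,C) \in \Pi_{\lambda_k}$, i.e.\ $e_{\lambda_k}(V,C)=0$, and also $J(V,C) \le t$. From the latter, the term $\tfrac{\sigma}{2}\|\nabla(C-C_{\text{ref}})\|_2^2 \le t$ gives a bound on $\|\nabla(C - C_{\text{ref}})\|_2$, hence on $\|\nabla C\|_2$, depending only on $t$, $\sigma$ and $C_{\text{ref}}$. Because $\U$ is an admissible set in the sense of \eqref{admissible} — a set bounded with respect to $\|\nabla\cdot\|_2$ is bounded in $H^1(\Omega)$ — this upgrades to a uniform bound $\|C\|_{H^1(\Omega)} \le R_1(t)$. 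Next, since $(V,C) \in \Pi_{\lambda_k}$, the pair $V = \Phi(C)$ is the unique weak solution of \eqref{NPE} for this $C$; the a priori estimates of Lemma~\ref{lem:apriori_zero} (for $\lambda_k = 0$) and Lemma~\ref{lem:main_regular} (for $\lambda_k > 0$) bound $\|V\|_{H^1(\Omega)}$ in terms of $\|C\|_{H^1(\Omega)}$, hence in terms of $t$. Setting $R(t) = \max\{R_1(t), R_2(t)\}$, every pair in $\bigcup_k \{\J_{\lambda_k} \le t\}$ lies in the ball $K_t := \{(V,C) : \|V\|_{H^1} \le R(t),\ \|C\|_{H^1} \le R(t)\}$, which is weakly compact by reflexivity of $X$. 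This yields equi-coercivity.

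The one point that requires care — and which I expect to be the main obstacle — is the $\lambda$-uniformity of the state estimate for $\lambda > 0$. The bound in Lemma~\ref{lem:main_regular} is stated as $\|V_\lambda\|_{H^1(\Omega)} \le c_\lambda \|C\|_2$ with $c_\lambda = c_p/\lambda^2$ \emph{blowing up} as $\lambda \to 0$, so one cannot invoke it naively. The resolution is that the relevant uniform bound is the gradient estimate $\|\nabla V_\lambda\|_2 \le \|\nabla V_0\|_2 \le \tfrac{1}{2\gamma^2}\|\nabla C\|_2$ coming from the chain of inequalities \eqref{eq:sequence} in the proof of Lemma~\ref{lem:main_convergence}: testing $\bar b_{\lambda}(V_\lambda) \le \bar b_{\lambda}(V_0)$ against the $\lambda=0$ comparison gives $\|\nabla V_\lambda\|_2 \le \|\nabla V_0\|_2$, and $V_0$ is controlled purely in terms of $C$ via Lemma~\ref{lem:apriori_zero}. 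Combined with $\int_\Omega V_\lambda\,dx = 0$ and the generalized Poincaré inequality, this furnishes the desired $k$-independent $H^1$-bound on $V_\lambda$; I would phrase the argument so as to lean on this comparison rather than on the $\lambda$-dependent constant in Lemma~\ref{lem:main_regular}. With this in hand the remaining steps are routine, and the proof closes.
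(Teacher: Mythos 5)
Your proposal is correct and follows essentially the same route as the paper's (much terser) proof: membership in the sublevel set forces $(V,C)\in\Pi_{\lambda_k}$, the regularization term plus the admissibility property of $\U$ bounds $C$ in $H^1(\Omega)$, and the $\lambda$-uniform bound on $V$ is exactly the one the paper borrows from the proof of Lemma~\ref{lem:main_convergence}, namely $\|\nabla V_\lambda\|_2\le\|\nabla V_0\|_2$ from the comparison \eqref{eq:sequence} together with Lemma~\ref{lem:apriori_zero} and the generalized Poincar\'e inequality. Your explicit flagging of why Lemma~\ref{lem:main_regular} cannot be used (its constant $c_p/\lambda^2$ blows up) is precisely the point the paper handles implicitly by citing Lemma~\ref{lem:main_convergence}, so no substantive difference remains.
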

\begin{proof}

We have to show that $\{ \J_{\lambda_k} < t \}$ is bounded in $X$ for each $t \in \rr$.
 
Let $t < \infty$, then every $(V,C) \in K_t:=\{\J_{\lambda_k}(V,C) \le t \}$ must be in the set of admissible states $\Pi_{\lambda_k}$ for some $\lambda_k>0$, simply due to the presence of the characteristic function. Furthermore, $\|C\|_{H^1}\le t$ due to the radial unboundedness of $J$ w.r.t.~$C$. As seen in the proof of Lemma~\ref{lem:main_convergence}, we have the uniform boundedness of $V$ and therefore the equi-coercivity of the sequence $\J_{\lambda_k}$. 
\end{proof}

Now, the convergence of minima and minimizers under the zero space charge limit $\lambda\to 0$ may be obtained from standard results of $\Gamma$-convergence \cite{DalMaso}.
\begin{cor}
 Let $\J_{\lambda}$, $\J_0$ be defined as above and $(\lambda_k)$ be a sequence with $\lambda_k \rightarrow 0$ as $k \rightarrow \infty$.
 \begin{enumerate}
  \item[(i)] (Convergence of minima) Then $\J_0$ attains its minimum on $X$ with
  \[
   \hspace*{4em} \min\nolimits_{z\in X} \J_0(z) = \lim\nolimits_{k \rightarrow \infty} \inf\nolimits_{z \in X} \J_{\lambda_k}(z).
  \]
  \item[(ii)] (Convergence of minimizers) If $z_k=\arg\min \J_{\lambda_k}$, then there is a subsequence (not relabeled) such that the following holds:
  \[
   \hspace*{4em} z_k  \rightharpoonup z_0\quad\text{in\, $X$},\qquad z_0=\arg\min \J_0 \in \Pi_0.
  \]
 \end{enumerate}
\end{cor}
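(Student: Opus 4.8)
The plan is to invoke the standard fundamental theorem of $\Gamma$-convergence (see \cite[Theorem~7.8 and Corollary~7.20]{DalMaso}), which states that if a sequence of functionals $(F_k)$ $\Gamma$-converges to $F$ and is weakly equi-coercive, then $F$ attains its minimum, $\min F = \lim_k \inf F_k$, and any sequence of (approximate) minimizers $z_k$ of $F_k$ admits a subsequence converging weakly to a minimizer of $F$. All the hypotheses have been assembled in the preceding results: the $\Gamma$-convergence $\J_{\lambda_k} \xrightarrow{\Gamma} \J_0$ is exactly the content of the theorem proved just above, and the weak equi-coercivity of $(\J_{\lambda_k})_k$ in $X = H^1(\Omega)\times H^1(\Omega)$ is the content of the immediately preceding theorem. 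So the corollary is essentially a citation, and the proof will be short.

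First I would establish part (i). Since $X$ is a reflexive Banach space with separable dual, equipped with its weak topology, and since $(\J_{\lambda_k})$ is weakly equi-coercive and $\Gamma$-converges to $\J_0$, the abstract theory guarantees that $\J_0$ is weakly lower semicontinuous (as a $\Gamma$-limit it automatically is) and, being bounded below, attains its infimum on $X$ — indeed one picks a minimizing sequence for $\J_0$, confines it to a weakly compact sublevel set $K_t$ by equi-coercivity, extracts a weakly convergent subsequence, and uses weak lower semicontinuity of $\J_0$. The identity $\min_{z\in X}\J_0(z) = \lim_{k\to\infty}\inf_{z\in X}\J_{\lambda_k}(z)$ is the standard convergence-of-infima statement: the $\limsup$-inequality $\limsup_k \inf \J_{\lambda_k} \le \J_0(z)$ for any $z$ follows from the (L-sup) recovery sequence, and the $\liminf$-inequality $\liminf_k \inf\J_{\lambda_k} \ge \min\J_0$ follows by taking near-minimizers $z_k$ of $\J_{\lambda_k}$, using equi-coercivity to extract a weakly convergent subsequence $z_k \rightharpoonup z$, and applying (L-inf).

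For part (ii), I would take $z_k = \arg\min\J_{\lambda_k}$. By weak equi-coercivity, since $\J_{\lambda_k}(z_k) = \inf_X \J_{\lambda_k}$ is bounded above uniformly in $k$ (it is bounded by $\J_{\lambda_k}(\hat z)$ for any fixed $\hat z \in \Pi_0$, hence in $\Pi_{\lambda_k}$ for all $k$, via the recovery sequences already constructed), the sequence $(z_k)$ lies in a fixed weakly compact set $K_t$, so a subsequence converges weakly to some $z_0 \in X$. Applying (L-inf) gives $\J_0(z_0) \le \liminf_k \J_{\lambda_k}(z_k) = \lim_k \inf_X\J_{\lambda_k} = \min_X\J_0$ by part (i); hence $z_0$ minimizes $\J_0$, and in particular $\J_0(z_0) < \infty$ forces $z_0 \in \Pi_0$.

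The main (and only) obstacle is really bookkeeping rather than mathematics: one must verify that the infima $\inf_X \J_{\lambda_k}$ are uniformly bounded above so that the minimizers $z_k$ land in a common sublevel set — this is where the recovery sequence from the $\limsup$-part of the $\Gamma$-convergence theorem (built via Lemma~\ref{lem:main_convergence} from a fixed admissible pair) does the work. Everything else is a direct appeal to the cited abstract results on $\Gamma$-convergence together with reflexivity and separability of $X$.
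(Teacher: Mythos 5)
Your proposal is correct and matches the paper's proof exactly: the paper also deduces (i) from \cite[Theorem~7.8]{DalMaso} and (ii) from \cite[Corollary~7.20]{DalMaso}, using the $\Gamma$-convergence and equi-coercivity established in the preceding two theorems. Your additional unpacking of the standard arguments (sublevel-set compactness, recovery sequences to bound the infima) is consistent with, and merely more detailed than, the paper's citation-style proof.
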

\begin{proof}
 The proof of (i) follows from \cite[Theorem~7.8]{DalMaso}, while the proof of (ii) follows from \cite[Corollary 7.20]{DalMaso}.
\end{proof}

\section{Numerical Implementation and Results}\label{numerics}
The aim of this section is the numerical illustration of the previous results. At first, solvers for the forward and adjoint problem are proposed and their convergence is shown. Then, the proposed methods are used to solve the optimization problems and compare the results for small $\lambda$ to the zero space charge solution. 

We recall the optimal control problem: Find $(V_*,C_*)\in \Sigma\times \U$ such that
\begin{align}\tag{$\text{\bf OP}_\lambda$}
 (V_*,C_*) = \arg\min J(V,C) \quad \text{subject to}\quad e_\lambda(V,C) = 0\quad\text{in}\quad \Y^*, 
\end{align}
 where the functional is given by 
\[
 J(V,C) = \frac{1}{2} \norm{n(V) - n_d}_{2}^2 + \frac{1}{2} \norm{p(V)- p_d}_{2}^2 + \frac{\sigma}{2} \norm{\nabla (C - C_{\text{ref}})}_{2}^2,
\]
for some given $n_d,p_d\in L^2(\Omega)$, $C_{\text{ref}}\in H^1(\Omega)\cap L^\infty(\Omega)$ and constant $\sigma>0$. Throughout this section we assume $U_{ad} \subset L^\infty(\Omega)$.

The algorithms for the equations are based on the Finite Element Method (FEM) and realized with help of the \texttt{FEniCS} package \cite{logg2012automated} in \texttt{python}. The integral constraints are included in the variational formulations using Lagrange multipliers. Hence, the discrete functions are defined on the mixed function space of linear Lagrange basis functions and $\rr$.

\subsection{Algorithm for the state equation}
The forward solver uses a fixed point iteration in order to avoid the discretization of the nonlocal terms. Keeping the nonlocal terms fixed we solve a local nonlinear monotone equation using Newton's method, then the integral terms are updated. More specifically, we begin by choosing $\alpha,\beta\ge 0$ appropriately. 

We then solve the nonlinear auxiliary problem: Find $V\in \Sigma$ satisfying

\begin{subequations}\label{aux_NNPE}
\begin{align}\label{eq:aux_NP}
 -\lambda^2\Delta V -\alpha e^{-V} + \beta e^{V} + C = 0\quad\text{in $\Omega$},\qquad \nu\cdot\nabla V=0\quad\text{on $\partial\Omega$},
\end{align}
using a Newton iteration procedure. Finally, we update $\alpha,\beta$ according to
\begin{align}\label{eq:update}
 \alpha = \frac{N}{\int_\Omega e^{-V}\,dx},\qquad \beta =\frac{P}{\int_\Omega e^V\,dx}.
\end{align}
\end{subequations}

This procedure continues until convergence is achieved. A pseudocode of the forward solver may be found in Algorithm~\ref{ForwardAlgLam}. The following theorem assures the convergence of the algorithm using monotone methods, see for example \cite{sattinger1972monotone}.

\RestyleAlgo{boxruled}
\LinesNumbered
\begin{algorithm2e}
\caption{Forward Solver ($\lambda > 0$)}\label{ForwardAlgLam}
 \KwData{Initial values $(\alpha_0, \beta_0)\ge 0$, doping profile $C$, stopping tolerance $\tt tol$}
 \KwResult{$(V,n,p)$ satisfying \eqref{NPE}, $\lambda>0$}
 {\bf Initialization}\;
 \While{$\norm{V_k - V_{k-1}}_{H^1(\Omega)} > \tt tol$}{
  \begin{itemize}
  \item [(a)] solve \eqref{eq:aux_NP} for $V_k$ using the Newton's method
  \item [(b)] compute $(\alpha_{k+1},\beta_{k+1})$ according to \eqref{eq:a_update}
  \end{itemize}
 }
\end{algorithm2e}

\begin{thm}
 The sequence $(V_k,\alpha_k,\beta_k)$ constructed by Algorithm~\ref{ForwardAlgLam} converges in $H^1(\Omega)\times \rr^2$ to the unique solution $(V,\alpha,\beta)$ of \eqref{aux_NNPE}.
\end{thm}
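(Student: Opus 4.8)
The plan is to interpret Algorithm~\ref{ForwardAlgLam} as a fixed-point iteration on the pair $(\alpha,\beta)$ and to show that the associated map is a contraction, or at least monotone in a way that forces convergence. First I would observe that for fixed $\alpha,\beta\ge 0$, the auxiliary problem \eqref{eq:aux_NP} is the Euler--Lagrange equation of the strictly convex, coercive functional
\[
 v\mapsto \lambda^2\|\nabla v\|_2^2 + \alpha\int_\Omega e^{-v}\,dx + \beta\int_\Omega e^{v}\,dx + \int_\Omega Cv\,dx,
\]
so it possesses a unique weak solution $V=V(\alpha,\beta)\in H^1(\Omega)$; this is exactly the situation covered by the monotone-operator / sub- and supersolution theory in \cite{sattinger1972monotone}, and it also guarantees that Newton's method in step (a) is well-posed and converges. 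Thus the algorithm is genuinely an iteration $(\alpha_k,\beta_k)\mapsto(\alpha_{k+1},\beta_{k+1})$ through the update \eqref{eq:update}, and the content of the theorem is the convergence of this outer iteration together with the continuous dependence $V_k = V(\alpha_k,\beta_k)\to V(\alpha_\infty,\beta_\infty)$.

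Next I would establish the a priori bounds that keep the iteration in a compact region. Testing \eqref{eq:aux_NP} with $V_k$ and using Young's and Poincar\'e inequalities (exactly as in the proof of Lemma~\ref{lem:main_regular}) gives a uniform $H^1$-bound on $V_k$ depending only on $\lambda$, $\|C\|_2$ and on upper bounds for $\alpha_k,\beta_k$; conversely, Corollary~\ref{cor:regular}-type estimates (or a direct comparison argument) bound $\int_\Omega e^{\pm V_k}\,dx$ from above and below, which via \eqref{eq:update} bounds $\alpha_{k+1},\beta_{k+1}$ away from $0$ and $\infty$. So the iterates $(\alpha_k,\beta_k)$ live in a fixed compact set $[\alpha_-,\alpha_+]\times[\beta_-,\beta_+]$, and by continuity of the solution map $(\alpha,\beta)\mapsto V(\alpha,\beta)$ (which follows from the strict convexity and the uniform estimates, again mirroring Lemma~\ref{lem:main_convergence}) the map $G:(\alpha,\beta)\maps\to\bigl(N/\!\int e^{-V(\alpha,\beta)},\,P/\!\int e^{V(\alpha,\beta)}\bigr)$ is continuous on this compact set, hence has a fixed point, which by the uniqueness part of Theorem~\ref{thm:main_state} must be the pair $(\alpha,\beta)$ associated to the unique solution $V$ of \eqref{aux_NNPE}.

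Finally I would upgrade "has a fixed point" to "the whole sequence converges to it." The cleanest route is monotonicity: I would show that $G$ is monotone with respect to a suitable partial order on $(\alpha,\beta)$ — increasing $\alpha$ decreases $V$ pointwise, hence increases $\int e^{-V}$ and decreases $\int e^{V}$, so $G$ reverses one coordinate's order and preserves the other — and then pass to a change of variables (e.g.\ $(\alpha,\beta)\mapsto(\alpha,1/\beta)$ or work with $\gamma^2=\sqrt{\alpha\beta}$ as in Lemma~\ref{lem:apriori_zero}) under which $G\circ G$ becomes genuinely order-preserving. Combined with the compactness this yields monotone convergence of the even and odd subsequences to fixed points of $G\circ G$; uniqueness of the solution of \eqref{aux_NNPE} then forces the two limits to coincide, giving convergence of the full sequence. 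The main obstacle is precisely this last step: establishing a contraction or a usable monotonicity structure for the composite map, since the coupling of the two nonlocal integrals through the single state $V$ is not obviously order-preserving, and one may instead need a Lipschitz estimate $\|V(\alpha,\beta)-V(\alpha',\beta')\|_{H^1}\le L(|\alpha-\alpha'|+|\beta-\beta'|)$ with $L$ small — which, given the exponential nonlinearities, requires care with the constants coming from the a priori bounds and the Poincar\'e inequality.
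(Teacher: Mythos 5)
Your first two steps (unique solvability of \eqref{eq:aux_NP} for frozen $(\alpha,\beta)$ via a strictly convex coercive functional, a priori bounds confining $(\alpha_k,\beta_k)$ to a compact box, continuity of the solution map, and identification of the unique fixed point with the solution of \eqref{aux_NNPE} via Theorem~\ref{thm:main_state}) are sound and overlap with the ingredients the paper uses. The genuine gap is exactly where you flag it: the passage from ``a continuous self-map of a compact box has a (unique) fixed point'' to ``the iteration converges to it'' is the actual content of the theorem, and your proposed repair does not close it. Concretely, (i) monotone convergence of the even/odd subsequences under $G\circ G$ requires the starting pair $(\alpha_0,\beta_0)$ to be order-comparable with its image, which is not guaranteed for the arbitrary nonnegative initialization allowed by Algorithm~\ref{ForwardAlgLam}; and (ii) your final appeal to uniqueness is not valid as stated: the limits of the even and odd subsequences are fixed points of $G\circ G$, i.e.\ possibly a $2$-cycle of $G$, and a $2$-cycle does not correspond to a solution of \eqref{aux_NNPE}, so uniqueness of the solution of the NNPE does not force the two limits to coincide. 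You would need an extra argument excluding nontrivial $2$-periodic orbits, or a genuine contraction estimate, neither of which is supplied (and a smallness condition on a Lipschitz constant $L$ is not available uniformly in the data).

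The paper avoids this difficulty by exploiting the order structure at the level of the potential rather than of $(\alpha,\beta)$: freezing the nonlocal denominators defines an operator $L_v$ for which the solution map $v\mapsto V$ with $L_v(V)=-C$ is order-preserving (comparison proved by testing the difference of two equations with its positive part, as in the computation showing $\underline V\le V_0\le\overline V$). One then builds a supersolution $\overline V$ and a subsolution $\underline V$ from the constants $\underline C=\min C$ and $\overline C=\max C$, runs the classical monotone (Sattinger-type) iteration sandwiched between them, and obtains that all iterates, hence all pairs $(\alpha_k,\beta_k)$, stay in the compact box $A\times B$; accumulation points solve \eqref{NPE}, and uniqueness of its solution (here uniqueness of an actual fixed point of $G$, not of $G\circ G$) upgrades subsequential to full convergence. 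Your own remark that increasing $\alpha$ raises $V$ and lowers $\beta_{\mathrm{new}}$ is essentially this monotone structure seen in the wrong coordinates; transported to $V$ it removes the obstacle you describe, but as written your plan stops short of a proof precisely at the convergence step.
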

\begin{proof}
The idea is to make use of a monotone convergence argument. We therefore construct a bounded sequence $(\alpha_k,\beta_k)$ defined by
\begin{align}\label{eq:a_update}
 \alpha_k = \frac{N}{\int e^{-V_k} }, \quad \beta_k = \frac{P}{\int e^{V_k} },
\end{align}
where $(V_k)$ is obtained in the following. First, we define the nonlinear operator 
 \begin{align}\label{Lop}
  L_v(V) = -\lambda^2\Delta V - N \frac{e^{-V}}{\int e^{-v}} + P \frac{e^V}{\int e^{v}}, 
 \end{align}
 for some $v\in\Sigma$. This operator is known to be strictly monotonically increasing in $V\in\Sigma$.
 
 Now let $\overline{V}$ be the unique solution of the nonlinear, nonlocal problem
 \begin{equation}\label{eq:upper}
  L_{\overline{V}}(\overline{V}) = -\underline{C} = -\min_x C(x)\ge -C,
 \end{equation}
 which is known to exists due to Theorem \ref{thm:main_state}. By construction $\overline{V}$ is a supersolution for the equation $L_{\overline{V}}(V) = -C$, i.e., it satisfies
\begin{equation*}
 L_{\overline{V}}(\overline{V}) \ge -C.
\end{equation*}
Analogously we find a subsolution $\underline{V}$ for the equation $L_{\underline{V}}(V)=-C$, by solving 
\begin{equation}\label{eq:lower}
  L_{\underline{V}}(\underline{V}) = -\overline{C} = -\max_x C(x) \le -C.
\end{equation}
With help of these sub- and supersolutions, we define the intervals
\begin{equation*}
 A = \left[ \frac{N}{\int_\Omega e^{-\underline{V}}\,dx}, \frac{N}{\int_\Omega e^{-\overline{V}}\,dx} \right] =: [\underline{\alpha},\overline{\alpha}], \qquad B = \left[ \frac{P}{\int_\Omega e^{\overline{V}}\,dx}, \frac{P}{\int_\Omega e^{\underline{V}}\,dx}\right] = [\underline{\beta},\overline{\beta}].
\end{equation*}
The task is now to show that the sequence $(\alpha_k,\beta_k)$ remain within the interval $A\times B$.

Let $V_0$ be the unique solution of 
\begin{equation}\label{eq:V0}
  L_{\overline{V}}(V_0) = -C.
 \end{equation}
 The following calculations show that $V_0 \le \overline{V}$ a.e.~in $\Omega$. Indeed, by subtracting \eqref{eq:upper} from \eqref{eq:V0} and testing with $(V_0-\overline{V})^+=\max\{0, V_0-\overline V\}$, we obtain
 \begin{align*}
  \int_\Omega |\nabla (V_0-\overline{V})^+|^2dx \le 0,
 \end{align*}
 where we used the fact that $(e^{-x}-e^{-y})(x-y)\le 0$ and $(e^{x}-e^{y})(x-y)\ge 0$ for any $x,y\in\rr$. Since $\|\nabla\cdot\|_2$ is a norm in $\Y$, we conclude that $\text{meas}(\{ x\in\Omega\,|\, V_0 >\overline V\}) = 0$. Analogously one obtains the lower bound $\underline{V} \le V_0$ by subtracting \eqref{eq:V0} from \eqref{eq:lower} and testing with $(\underline V - V_0)^+$. 
 
 For a sub- and supersolution $\underline V_0, \overline V_0$ of \eqref{eq:V0} respectively, we define
 \[
  \overline\alpha_0 = \frac{N}{\int_\Omega e^{-\overline{V}_0}\,dx},\qquad \underline\alpha_0 = \frac{N}{\int_\Omega e^{-\underline{V}_0}\,dx},\qquad \underline\beta_0 = \frac{P}{\int_\Omega e^{\overline{V}_0}\,dx},\qquad \overline\beta_0 = \frac{P}{\int_\Omega e^{\underline{V}_0}\,dx}.
 \]
 Clearly, we have the chain of inequalities
 \[
  \underline V\le \underline V_0\le V_0\le \overline V_0\le \overline V\quad\text{a.e.~in $\Omega$}.
 \]
 Due to the monotonicity of $e^{-x}$ and $e^x$, we further obtain
 \[
  \underline \alpha \le \underline \alpha_0 \le \alpha_0 \le \overline \alpha_0 \le  \overline\alpha,\qquad \underline \beta \le \underline \beta_0 \le \beta_0 \le \overline \beta_0 \le  \overline\beta.
 \]
 The iteration proceeds by solving 
 \[
  L_{\overline{V}_{k-1}}(V_k) = -C,
 \]
 resulting in the sequence $(\alpha_k,\beta_k)$ within the compact interval $A\times B$. Consequently, the constructed sequence $(\alpha_k,\beta_k)$ admits accumulation points $(\alpha_*,\beta_*)$ such that
 \begin{equation*}
  \alpha_k \longrightarrow \alpha_*, \qquad \beta_k \longrightarrow \beta_* \quad\text{for $k \rightarrow \infty$},
 \end{equation*}
 hold for subsequences (not relabeled) of $(\alpha_k,\beta_k)$. 
 
 The corresponding solution $V^*$ of the auxiliary problem \eqref{eq:aux_NP} solves \eqref{NPE}. Since, the solution of \eqref{NPE} is known to be unique, the complete sequence $(V_k)$ converges towards $V_*\in\Sigma$.
\end{proof}

In the case $\lambda = 0$, the equation to solve for $V$ is a nonlinear algebraic equation instead of a nonlinear Poisson equation. In fact, the proof using sub- and supersolutions may be directly transfered to this case. As shown in Lemma~\ref{lem:apriori_zero}, the solution satisfies a given equation. As before, we perform the iteration for $(\alpha,\beta)$ in this case as well, see Algorithm~\ref{ForwardAlg}.

\begin{algorithm2e}
\caption{Forward Solver ($\lambda = 0$)}\label{ForwardAlg}
 \KwData{Initial values $(\alpha_0,\beta_0)\ge 0$, doping profile $C$, stopping tolerance $\tt tol$}
 \KwResult{$(V,n,p)$ satisfying \eqref{zero}}
 {\bf Initialization}\;
 \While{$\norm{V_k - V_{k-1}}_{2} > \tt tol$ }{
  \begin{itemize}
  \item [(a)] compute $V_k$ using \eqref{VinL0}
  \item [(b)] compute $(\alpha_{k+1},\beta_{k+1})$ according to \eqref{eq:a_update}
  \end{itemize}
 }
\end{algorithm2e}

\subsection{Algorithm for the adjoint equation}

As mentioned in Section~\ref{sec:opt}, a direct discretization of the nonlocal terms results in dense matrices. In order to avoid this, we define two new variables, namely

\begin{subequations}\label{adjoint_new}
\begin{equation}\label{eq:Lagrange-Multipliers}
 \xi^\alpha = \frac{1}{N}\int_\Omega n\, [\xi  - (n-n_d)]\,dx, \qquad \xi^\beta = \frac{1}{P}\int_\Omega p\, [\xi +  (p-p_d)]\,dx.
\end{equation}
Consequently, the adjoint equation \eqref{eq:adjoint} may be equivalently written as
\begin{align}\label{eq:Adjoint-Poisson}
 -\lambda^2 \Delta \xi + (n + p) \xi - n\,\xi^\alpha - p\,\xi^\beta &= n(n-n_d) - p(p-p_d)\qquad \text{in}\;\;\Omega. 
\end{align}
\end{subequations}
Note that the occurring matrices are sparse. This formulation is used to define Algorithm~\ref{AdjointAlgLam} for the adjoint problem in the case $\lambda >0 $.

We begin the iteration by solving the linear, local equation \eqref{eq:Adjoint-Poisson} for $\xi$ using the fixed values of the parameters $\xi^\alpha$ and $\xi^\beta$. Then the parameters are updated and the new Poisson equation is solved. This procedure continues until convergence is achieved, see Algorithm \ref{AdjointAlgLam}. The convergence the the algorithm is proven by the following theorem.

\begin{algorithm2e}
\caption{Adjoint Solver ($\lambda > 0$) }\label{AdjointAlgLam}
 \KwData{Initial values $(\xi^\alpha_0,\xi^\beta_0)$, solution the forward problem $V$, stopping tolerance ${\tt tol}$}
 \KwResult{$(\xi,\xi^\alpha,\xi^\beta)$ solving the adjont system \eqref{adjoint_new}}
 {\bf Initialization}\;
 \While{$\norm{\xi_k - \xi_{k-1}}_{H^1(\Omega)} > {\tt tol}$ }{
  \begin{itemize}
  \item [(a)] solve the adjoint problem \eqref{eq:Adjoint-Poisson} for $\xi_k$ with fixed $(\xi_k^\alpha$, $\xi_k^\beta)$
  \item [(b)] update $\xi_k$ to satisfy $\int \xi_k\, dx =0$
  \item [(c)] compute $(\xi^\alpha_{k+1}, \xi^\beta_{k+1})$ according to \eqref{eq:Lagrange-Multipliers}
  \end{itemize}
 }
\end{algorithm2e}

\begin{thm}\label{thm:conv_adjoint}
Let $V\in L^\infty(\Omega)$ be given. Then, the sequence $(\xi_k,\xi^\alpha_k,\xi^\beta_k)$ constructed by Algorithm \ref{AdjointAlgLam} converges in $H^1(\Omega)\times \rr^2$ to the solution $(\xi,\xi_\alpha,\xi_\beta)$  of \eqref{adjoint_new} for $\lambda > 0$.
\end{thm}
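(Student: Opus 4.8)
The plan is to recast Algorithm~\ref{AdjointAlgLam} as a linear fixed-point iteration on the parameter pair $(\xi^\alpha,\xi^\beta)\in\rr^2$ and to prove that the associated iteration matrix is a strict contraction in a weighted Euclidean norm; the decisive estimate will be exactly the Jensen-type inequality already used above to establish coercivity of the adjoint bilinear form.

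First I would record the consequences of the hypothesis $V\in L^\infty(\Omega)$: the densities in \eqref{eq:densities} then satisfy $0<c_0\le n,p\le c_1<\infty$ a.e.~in $\Omega$, so $n+p\ge 2c_0>0$, and the bilinear form $a_0(\eta,\test):=\lambda^2\int_\Omega\nabla\eta\cdot\nabla\test\,dx+\int_\Omega(n+p)\,\eta\,\test\,dx$ is bounded and $H^1(\Omega)$-coercive for $\lambda>0$. For frozen parameters $(\xi_k^\alpha,\xi_k^\beta)$, step~(a) of the algorithm together with the mean-zero normalisation of step~(b) (realised, as throughout this section, through a Lagrange multiplier for the integral constraint) amounts to the Galerkin problem: find $\xi_k\in\Y$ with $a_0(\xi_k,\test)=\int_\Omega(g+n\,\xi_k^\alpha+p\,\xi_k^\beta)\,\test\,dx$ for all $\test\in\Y$, where $g:=n(n-n_d)-p(p-p_d)$; by the Lax-Milgram theorem this has a unique solution $\xi_k\in\Y$, depending affinely and continuously on $(\xi_k^\alpha,\xi_k^\beta)$. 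Step~(c) then reads off $(\xi_{k+1}^\alpha,\xi_{k+1}^\beta)$ from \eqref{eq:Lagrange-Multipliers}, so one sweep of Algorithm~\ref{AdjointAlgLam} is an affine map $\Psi\colon\rr^2\to\rr^2$, $\Psi(x)=Mx+c$, with linear part $M$ independent of $n_d,p_d$. Since the pair $(\xi,\xi^\alpha,\xi^\beta)$ solving \eqref{adjoint_new} — equivalently the unique adjoint solution $\xi\in\Y$ of \eqref{eq:adjoint} established above — is precisely the fixed point of $\Psi$, and an affine iteration on $\rr^2$ converges to its fixed point from every starting value as soon as $\|M\|<1$ in some norm, it suffices to prove $\|M\|_w<1$ for a convenient weighted norm $\|\cdot\|_w$.

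To estimate $M$, set $\delta_k:=(\xi_k^\alpha,\xi_k^\beta)-(\xi^\alpha,\xi^\beta)$, so $\delta_{k+1}=M\delta_k$, and $e_k:=\xi_k-\xi\in\Y$. Subtracting the Galerkin equations for $\xi_k$ and $\xi$ shows $a_0(e_k,\test)=\int_\Omega(n\,\delta_k^\alpha+p\,\delta_k^\beta)\,\test\,dx$ for all $\test\in\Y$, while \eqref{eq:Lagrange-Multipliers} gives $\delta_{k+1}^\alpha=\tfrac1N\int_\Omega n\,e_k\,dx$ and $\delta_{k+1}^\beta=\tfrac1P\int_\Omega p\,e_k\,dx$. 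Taking $\test=e_k$ yields the energy identity
\[
 \lambda^2\|\nabla e_k\|_2^2+\int_\Omega(n+p)\,e_k^2\,dx=\delta_k^\alpha\!\int_\Omega n\,e_k\,dx+\delta_k^\beta\!\int_\Omega p\,e_k\,dx .
\]
Endow $\rr^2$ with $\|(a,b)\|_w:=(Na^2+Pb^2)^{1/2}$. By the Cauchy-Schwarz inequality the right-hand side is at most $\|\delta_k\|_w\,\big(\tfrac1N(\int_\Omega n\,e_k\,dx)^2+\tfrac1P(\int_\Omega p\,e_k\,dx)^2\big)^{1/2}=\|\delta_k\|_w\,\|\delta_{k+1}\|_w$, whereas Jensen's inequality applied to the probability measures $\tfrac nN\,dx$ and $\tfrac pP\,dx$ — the same estimate behind the coercivity of the adjoint form — gives $\|\delta_{k+1}\|_w^2=\tfrac1N(\int_\Omega n\,e_k\,dx)^2+\tfrac1P(\int_\Omega p\,e_k\,dx)^2\le\int_\Omega(n+p)\,e_k^2\,dx$. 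Chaining these with the energy identity produces $\|\delta_{k+1}\|_w^2\le\|\delta_k\|_w\,\|\delta_{k+1}\|_w$, i.e.~$\|M\delta_k\|_w\le\|\delta_k\|_w$. This is strict whenever $\delta_k\ne0$: if $\delta_{k+1}=0$ it is trivial; if $\delta_{k+1}\ne0$ then $e_k\in\Y$ cannot be constant, so $\lambda^2\|\nabla e_k\|_2^2>0$ and the energy identity forces $\int_\Omega(n+p)e_k^2\,dx<\delta_k^\alpha\int_\Omega n\,e_k\,dx+\delta_k^\beta\int_\Omega p\,e_k\,dx$, giving strict inequality throughout. Hence $\|M\delta\|_w<\|\delta\|_w$ for all $\delta\ne0$, and by compactness of the unit sphere of $(\rr^2,\|\cdot\|_w)$ we obtain $\|M\|_w<1$. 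Therefore $\delta_k\to0$ geometrically, and by the continuity of the step~(a)--(b) solve, $\xi_k\to\xi$ in $H^1(\Omega)$; together with $(\xi_k^\alpha,\xi_k^\beta)\to(\xi^\alpha,\xi^\beta)$ this is the assertion.

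The step I expect to be the main obstacle is the strictness in the last argument: one must be sure that the only directions along which the $\lambda^2\|\nabla e_k\|_2^2$ term degenerates (namely constant error profiles) are directions on which $M$ already acts trivially, and this is precisely what the zero-mean normalisation of step~(b) guarantees. It is also here that $\lambda>0$ enters essentially — for $\lambda=0$ one only recovers the non-strict bound $\|M\delta\|_w\le\|\delta\|_w$, in accordance with the fact that the case $\lambda=0$, where the adjoint equation degenerates to a Fredholm integral equation, is treated separately.
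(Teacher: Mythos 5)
Your proposal is correct, and its core estimates are the same ones the paper uses: the weighted Cauchy--Schwarz/Jensen step reducing the parameter error to $\int_\Omega (n+p)\,e^2\,dx$ (the paper applies it to consecutive iterates $\varepsilon_k^\alpha,\varepsilon_k^\beta$ via \eqref{eq:Lagrange-Multipliers}), and the energy identity obtained by subtracting two copies of \eqref{eq:Adjoint-Poisson} and testing with the difference. Where you diverge is in how strictness of the contraction is extracted. The paper combines Young's inequality with the Poincar\'e inequality and the bound $n+p\le M$ (this is where $V\in L^\infty(\Omega)$, via Lemma~\ref{lem:main_regular}, enters for them) to obtain an \emph{explicit} contraction factor $1/c_M$ with $c_M=1+2\lambda^2/(c_p^2M)>1$, and then invokes Banach's fixed point theorem on $\rr^2$; this quantitative factor is also what supports the subsequent remark that the contraction deteriorates as $\lambda\to 0$. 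You instead measure the error to the fixed point in the weighted norm $(Na^2+Pb^2)^{1/2}$, obtain only the non-strict bound $\|M\delta\|_w\le\|\delta\|_w$, and upgrade it to $\|M\|_w<1$ by observing that equality would force the error profile $e_k\in\Y$ to be constant (hence zero, by the mean-zero normalisation) and then using compactness of the unit sphere in $\rr^2$. This is a softer argument: it yields no explicit rate and hides the $\lambda$-dependence, but it needs less -- the upper bound on $n+p$ is only used for boundedness of the bilinear form, the Poincar\'e inequality is not needed for the contraction itself, and the role of $\lambda>0$ is isolated cleanly in the strictness step (consistent with the separate Fredholm treatment of $\lambda=0$). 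Both routes are valid; the paper's buys a computable geometric rate, yours a more economical proof of plain convergence.
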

\begin{proof}
 Note that regarding the adjoint system the values of $V, n, p, n_d$ and $p_d$ are known. Denoting the difference of two consecutive iterates as
 \[
  \varepsilon^\alpha_k = \xi^\alpha_k-\xi^\alpha_{k-1},\qquad \varepsilon^\beta_k = \xi^\beta_k-\xi^\beta_{k-1},\qquad \varepsilon_k = \xi_k-\xi_{k-1},
 \]
 we apply the H\"older inequality to obtain the estimates
 \[
  N|\varepsilon^\alpha_k|^2 \le \int_\Omega n\,\varepsilon_{k-1}^2\,dx,\qquad P|\varepsilon^\beta_k|^2 \le \int_\Omega p\,\varepsilon_{k-1}^2\,dx
 \]
 Similarly, we subtract \eqref{eq:Adjoint-Poisson} for two consecutive iterates of $\xi$ and testing with $\varepsilon_k$ result in
 \[
  \int_\Omega \lambda^2| \nabla \varepsilon_k|^2 + (n+p)\, \varepsilon_k^2\, dx = \int_\Omega \big(n\,\varepsilon^\alpha_{k} + p\,\varepsilon^\beta_{k}\big) \varepsilon_k\, dx.
 \]
 Using Young's inequality we estimate as follows
 \[
  \int_\Omega \lambda^2| \nabla \varepsilon_k|^2 + (n+p)\, \varepsilon_k^2\, dx \le \frac{1}{2}   \left( N\,|\varepsilon^\alpha_{k}|^2 + \int_\Omega(n+p)\,\varepsilon_k^2\,dx + P\,|\varepsilon^\beta_{k}|^2\,\right).
 \]
With help of the Poincaré inequality we estimate further to obtain
\begin{align*}
 c_0(\lambda) \int_\Omega \varepsilon_k^2\,dx + \int_\Omega (n+p)\,\varepsilon_k^2\,dx \le N |\varepsilon^\alpha_{k}|^2 + P |\varepsilon^\beta_{k}|^2.
\end{align*}
with $c_0(\lambda) = 2\lambda^2/c_p^2$, where $c_p$ is the Poincar\'e constant. Since $n,p \in L^\infty(\Omega)$ (cf.~Lemma~\ref{lem:main_regular}), we find some $M>0$ such that $n + p\le M$. Therefore, we obtain
\[
 \int_\Omega (n+p)\,\varepsilon_k^2\,dx \le \frac{1}{c_M} \left( N |\varepsilon^\alpha_{k}|^2 + P |\varepsilon^\beta_{k}|^2 \right)
\]
with $c_M = 1 +c_0(\lambda)/M > 1$.
Altogether we obtain the contraction
\begin{align*}
 N |\varepsilon^\alpha_{k}|^2 + P |\varepsilon^\beta_{k}|^2 \le \int_\Omega (n+p)\,\varepsilon_{k-1}^2\,dx \le c_M \left( N |\varepsilon^\alpha_{k-1}|^2 + P |\varepsilon^\beta_{k-1}|^2 \right)
\end{align*}
Thus, the algorithm defines a contraction 
\begin{equation*}
   f \colon \mathbb{R}^2 \rightarrow \mathbb{R}^2, \quad f(\xi^\alpha_k,\xi^\beta_k) = (\xi^\alpha_{k+1},\xi^\beta_{k+1})
\end{equation*}
Since $\mathbb{R}^2$ is a complete metric space, the algorithm converges to the unique fixed point, which solves the adjoint problem.
\end{proof}

\begin{rem}
Note that the prefactor determining if the function is a contraction or not, depends on $\lambda^2$ being positive. For small $\lambda$ the contraction property deteriorates. A workaround would be to scale the domain $\Omega$ by $1/\lambda$, which results in a vanishing $\lambda^2$. The drawback is the large domain arising from this scaling. Computational tests have shown that the large domain and therefore huge matrices, which require more memory, have greater negative impact than the $\lambda^2$ in the contraction.
\end{rem}

\begin{rem}
As stated above for $\lambda=0$, the adjoint equation is a Fredholm type integral equation. The discretization is realized again in \texttt{FEniCS} with linear testfunctions resulting in the system
\begin{equation*}\label{eq:IE} \tag{\text{\bf IE}}
 (MD - MH) \overline{\xi} =  F,
\end{equation*}
where $M$ is the mass matrix of the finite elements, $D$ the diagonal matrix with the entries of the discretized $n + p$, $H$ the matrix containing the discretization of the integral kernel, $\overline{\xi}$ denotes the vector of the discretized $\xi$ and $F$ is the discretized version terms on the right hand side of \eqref{eq:adjoint_zero}.
\end{rem}

\begin{rem}
 In fact, the parameters $\xi^\alpha$ and $\xi^\beta$ coincide with the adjoint variables corresponding to $\alpha$ and $\beta$ respectively, if one considers the operator equation for $e_\lambda(V,C)$ as
 \begin{equation*}
  \langle\hat{e}_\lambda(V,\alpha,\beta,C),\test\rangle = 0\qquad\text{for all $\test\in \Y\times \rr\times\rr$}
 \end{equation*}
 where the operator $\hat e_\lambda$ is defined for any $\test=(\xi,\xi^\alpha,\xi^\beta)\in \Y\times\rr\times\rr$ by
 \begin{gather*}
  \langle\hat{e}_{\lambda,1},\xi\rangle = -\lambda^2 \int_\Omega \nabla V\cdot\nabla\xi\,dx - \int_\Omega \big(\alpha e^{-V} - \beta e^V - C\big)\xi\,dx, \\
  \langle\hat{e}_{\lambda,2},\xi^\alpha\rangle = \left(\alpha\int_\Omega e^{-V} dx - N\right)\xi^\alpha,\qquad 
  \langle\hat{e}_{\lambda,3},\xi^\beta\rangle = \left(\beta\int_\Omega e^V dx - P\right)\xi^\beta.
 \end{gather*}
 In this case, the cost functional $J$ will have to be reformulated in terms of $(V,\alpha,\beta)$, i.e.,
 \[
  J(V,\alpha,\beta,C) = \frac{1}{2} \norm{\alpha e^{-V} - n_d}_{2}^2 + \frac{1}{2} \norm{\beta e^V- p_d}_{2}^2 + \frac{\sigma}{2} \norm{\nabla (C - C_{\text{ref}})}_{2}^2.
 \]
 To see that $\xi^\alpha$ is indeed the adjoint variable corresponding to $\alpha$, we formally compute the Gat\'eaux derivative of the Lagrangian
 \[
  \mathcal{L}(V,\alpha,\beta,C,\test) = J(V,\alpha,\beta,C) + \langle\hat{e}_\lambda(V,\alpha,\beta,C),\test\rangle,
 \]
 with respect to $\alpha$ in any direction $h\in\rr$, to obtain
 \begin{align*} 
  d_\alpha \mathcal{L}(V,\alpha,\beta,C,\test)[h] &= \int_\Omega (\alpha e^{-V}-n_d)e^{-V}h\,dx - \int_\Omega  e^{-V}\xi h\,dx + \int_\Omega e^{-V}\xi^\alpha h \,dx =0.
 \end{align*}
 Reformulating the equation in terms of $\xi^\alpha\in\rr$, we obtain
 \[
  \xi^\alpha = \frac{1}{\int e^{-V}dx}\int_\Omega  e^{-V}[\xi - (n-n_d)]\,dx = \frac{1}{N}\int_\Omega n[\xi - (n-n_d)]\,dx,
 \]
 as required, where we used the fact that $n=\alpha e^{-V}$.
\end{rem}

\subsection{Algorithm for the optimal control problem}

The forward and adjoint solvers stated above are used in Algorithm~\ref{OptAlg} for the computation of the optimal control. 
\begin{algorithm2e}
\caption{Optimal Control }\label{OptAlg}
 \KwData{Initial doping profile $C_0$ and potential $V$, and stopping tolerances ${\tt tol}_{opt}$, ${\tt tol}_{abs}$}
 \KwResult{Optimal doping profile, potential, electron and hole densities $(C_*,V_*,n_*,p_*)$}
 {\bf Initialization}\;
 \While{relative norm of gradient $> {\tt tol}_{opt}$ \textbf{and} norm of gradient $> {\tt tol}_{abs}$ }{
  \begin{itemize}
  \item [(a)] solve forward problem with either Algorithm \ref{ForwardAlgLam} or \ref{ForwardAlg}
  \item [(b)] solve adjoint problem with either Algorithm \ref{AdjointAlgLam} or \eqref{eq:IE}
  \item [(c)] solve Riesz representation problem \eqref{eq:riesz} to obtain $g_k$
  \item [(d)] compute $u_{k+1}$ according to \eqref{eq:update_u} using Algorithm~\ref{ArmRule}
  \end{itemize}
 }
\end{algorithm2e}

As pointed out in Section~\ref{sec:opt}, the optimality condition for the optimal control is given as a system of partial differential equations. However, instead of solving the system directly, we consider a path following optimization procedure, which iteratively determines a better approximation of a local minimizer. More specifically, we consider a steepest descent algorithm for the reduced cost functional $\hat J$, in which we update the doping profile according to
\begin{align}\label{eq:update_u}
  u_{k+1} = u_k + \omega g_k,\quad k\ge 0,
\end{align}
where $g_k\in\Y$ denotes the correct gradient representation of the derivative $d\hat J$, obtained uniquely by solving the Riesz representation problem
\begin{align}\label{eq:riesz}
 \int_\Omega  \nabla g_k\cdot \nabla \test \,dx = \int_\Omega \nabla u_k \cdot \nabla \test + \xi_k\, \test \,dx \qquad \text{for all $\varphi \in \Y$}.
\end{align}
To obtain an appropriate stepsize $\omega$ we apply the Armijo stepsize rule (cf.~\cite{Pinnau}).
\begin{algorithm2e}[h!]
\caption{Armijo Stepsize Rule }\label{ArmRule}
 \KwData{Current iterare $u_k$, gradient $g_k$, initial $\omega_0$, initial $\gamma>0$}
 \KwResult{New iterate $u_{k+1}$}
 {\bf Initialization}\;
 \While{ $\hat{J}(u_k + \omega g_k) \ge  \hat{J}(u_k) + \gamma\,\omega \norm{g_k}^2$ }{
   $\omega = \omega/2$
 }
\end{algorithm2e}

\subsection{Numerical influence of $\lambda$}

A comparison of the computation times for the state and adjoint solutions for different $\lambda\ge 0$ can be found in Table \ref{computationalTime}. As seen in Table~\ref{computationalTime}, the computation of the adjoint ($\lambda>0$) is, on average, cheaper than the one of the state equation. For the state problem, the Newton iteration requires several solves of the auxiliary problem \eqref{eq:aux_NP}. Thus, the most computational effort can be salvaged when reducing the state problem to the $\lambda=0$ case. Instead of solving many nonlinear differential equations for the Newton iteration, one iterates between $\alpha,\beta$ and the explicit solution of $V$ for $\lambda=0$ given in \eqref{VinL0}.
\begin{table}[h]\centering
\begin{tabular}{|c|c|c|c|} 
  \hline
  $\lambda^2$ & State [s] & Adjoint [s] & $\norm{C_\lambda - C_0}_{2}$ \\
  \hline
  1e-3 &   2.00 & 2.04 & 0.064737 \\
  1e-4 &   6.61 & 4.91 & 0.056407 \\
  1e-5 &   25.51 & 10.68 & 0.032445 \\
  1e-6 &   69.63 & 11.8 & 0.0014969 \\
  1e-7 & 117.66 & 11.3 &  0.0008388\\
  1e-8 & 146.55 & 12.5 & 0.0007833\\
  1e-9 & 193.36 & 10.5 & 0.0007807\\
  0.0	& 0.04   & 1.13 & 0 \\
  \hline
 \end{tabular}\\[1em]
 \caption{Computation times for the solution of \eqref{NPE} and its adjoint equation for different $\lambda$. Difference between the optimal doping profiles for different $\lambda\ge 0$ and $\lambda=0$ in the $L^2$-norm.}
 \label{computationalTime}
\end{table}

While the computation times for the solution of the adjoint problem are stable for $\lambda>0$, one observes the increase in computational time for small $\lambda>0$. Indeed, for small $\lambda>0$, the corresponding discretization matrices become stiffer and therefore require more iterations when solving linear systems. Using the zero space charge solution instead of the solution for $\lambda^2 = 10^{-9}$, the result of the forward problem is calculated 1000 times faster. Thus, from the computational point of view the approximation of the optimal controls in cases with small $\lambda>0$ with the zero space charge solution is very useful. The fourth column of Table~\ref{computationalTime} shows the $L_2$ difference of the optimal doping profiles for different $\lambda$ to the zero space charge solution. The values underline the convergence proved in Section~\ref{sec:gamma} as well.

\subsection{Numerical results}
In this section, we use Algorithm~\ref{OptAlg} to obtain numerical results for the optimal control problems. The parameters are set to the values given in Table~\ref{Params}. 
\begin{table}[h]\centering
\begin{tabular}{|c|c|c|c|} 
  \hline
  parameter & value & parameter & value \\
  \hline
  domain $\Omega$  & $[0,1]$  & $\gamma$ & 0.0001   \\
  grid points  & 200 & $\omega_0$   &  50  \\
   ${\tt tol}$& 1e-8 & $\sigma$ & 1e-4  \\
   ${\tt tol}_{opt}$ & 5e-2 & $\delta$  & 10e-4  \\
   ${\tt tol}_{abs}$ & 5e-5  & & \\
  \hline
 \end{tabular}\\[1em]
 \caption{Parameter values for the optimization algorithm}
 \label{Params}
\end{table}

The non-symmetric reference doping profile $C_{\text{ref}}$ depicted in Figure~\ref{fig:CrefStart}(left) serves as the initial doping profile for the optimization. Note that the desired electron and hole densities in Figure~\ref{fig:CrefStart}(right) are not attainable due to the constraints 
\[
 \int_\Omega (C - C_\text{ref})\,dx = 0,\qquad N = \delta^2 +\int_\Omega C^+ dx,\qquad P = \delta^2-\int_\Omega C^- dx.
\]
By choosing the reference doping profile as initial doping profile, the first constraint is trivially satisfied. With the desired densities given in Figure~\ref{fig:CrefStart}(right), the aim of the cost functional is to reduce the electron density and to increase the hole density. 

\begin{figure}
\centering
    \begin{minipage}{0.49\linewidth}
        \centering
        \includegraphics[width = 1.\textwidth]{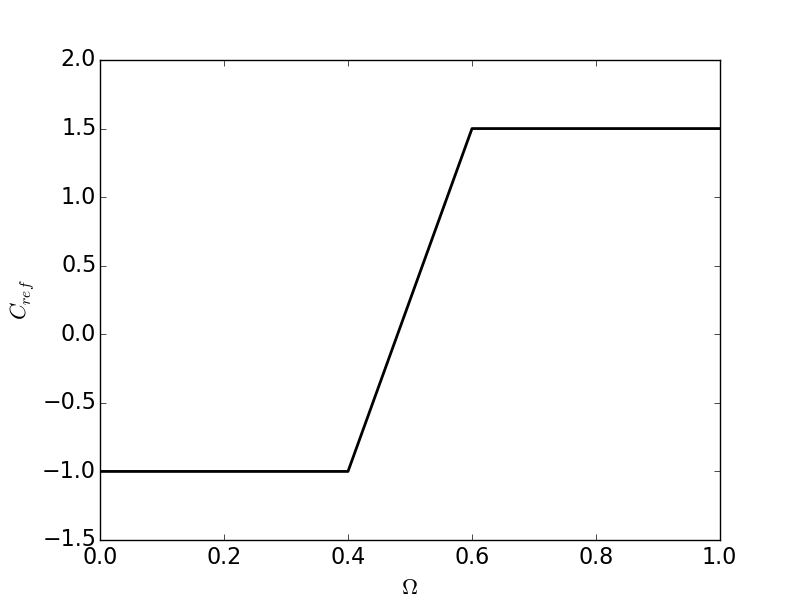}
    \end{minipage}
    \begin{minipage}{0.49\linewidth}
        \centering
        \includegraphics[width = 1.\textwidth]{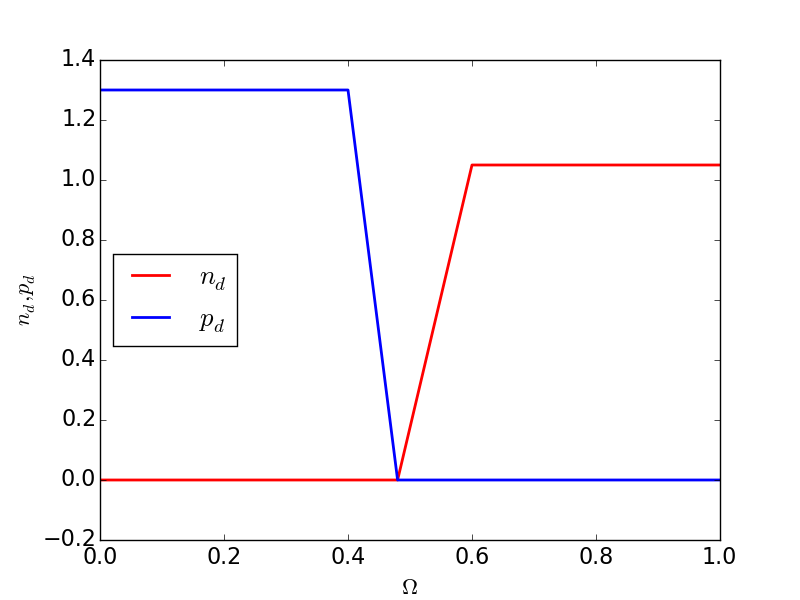}
    \end{minipage}
    \caption{Left: Initial and reference doping profile $C_{\text{ref}}$. Right: Desired electron density $n_{d} = 0.8 C_{\text{ref}}^+$ (red) and hole density $p_{d} = 1.2C_{\text{ref}}^-$ (blue).}\label{fig:CrefStart}
\end{figure}

\begin{figure}
\centering
    \begin{minipage}{0.49\linewidth}
        \centering
        \includegraphics[width = 1.\textwidth]{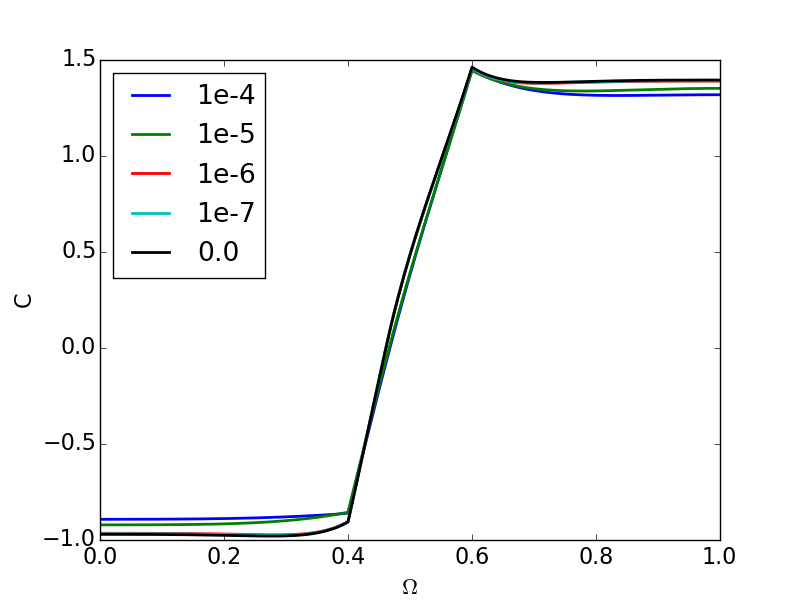}
    \end{minipage}
    \begin{minipage}{0.49\linewidth}
        \centering
        \includegraphics[width = 1.\textwidth]{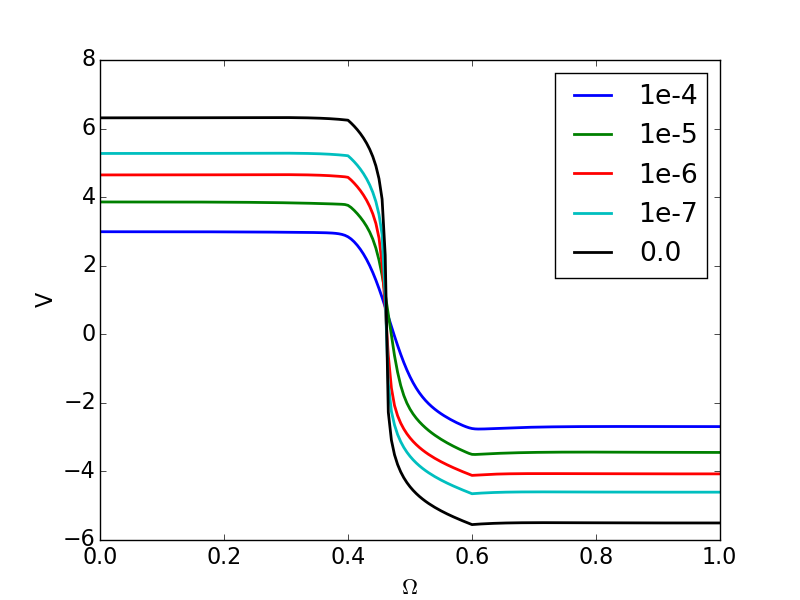}
    \end{minipage}
    \caption{Left: The optimal doping profile for different values of $\lambda^2$. Right: The optimal potential for different values of $\lambda^2$.}
    \label{fig:CVopt}
\end{figure}

\begin{figure}
\centering
    \begin{minipage}{0.49\linewidth}
        \centering
        \includegraphics[width = 1.\textwidth]{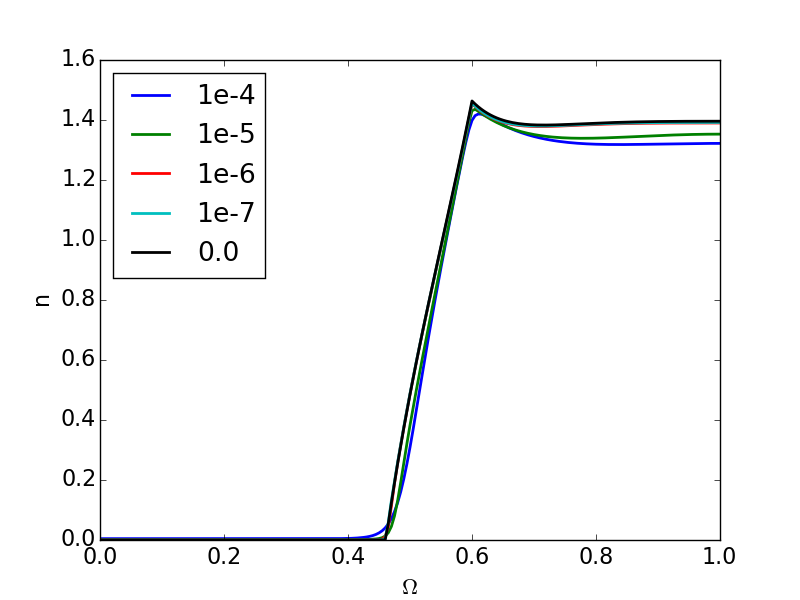}
    \end{minipage}
    \begin{minipage}{0.49\linewidth}
        \centering
        \includegraphics[width = 1.\textwidth]{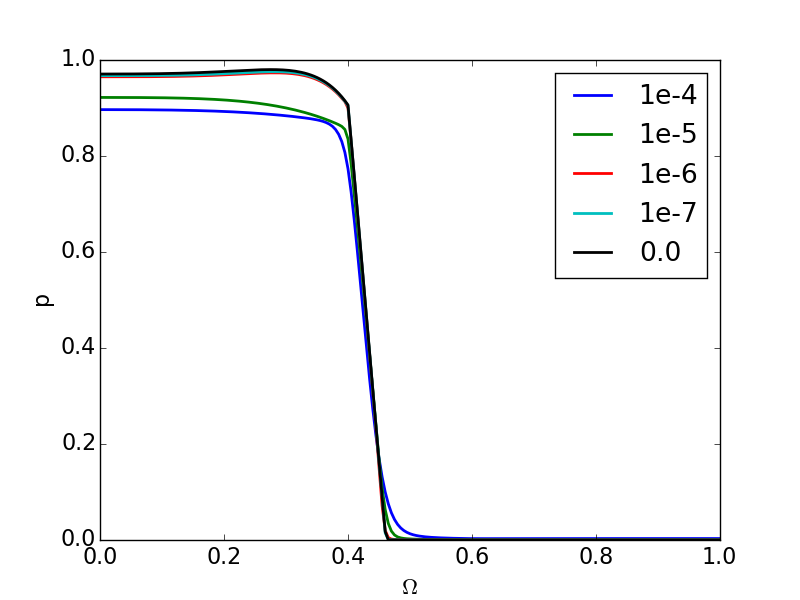}
    \end{minipage}
    \caption{Left: The optimal electron density for different values of $\lambda^2$. Right: The optimal hole density for different values of $\lambda^2$.}
    \label{fig:NPopt}
\end{figure}

In Figure \ref{fig:CVopt}(left) the optimal doping profiles for different $\lambda$ are depicted. As predicted by the theory in Section~\ref{sec:gamma}, the optimal doping profiles converges to the zero space charge optimal doping profile for decreasing values of $\lambda$. Note, that the convergence of the densities depicted in Figure~\ref{fig:NPopt} is more pronounced than the  convergence of the optimal potentials in Figure~\ref{fig:CVopt}(right). This difference in behaviour of the potential $V$ and the densities may be explained by the structure of their expressions, since $V$ appears in the exponential terms of $n$ and $p$. 

In the following plots, we investigate the cost functional and parts of it, in which we set
\[
 J_1(n) = \frac{1}{2}\norm{n(V) - n_d}_{2}^2,\qquad J_2(p) = \frac{1}{2} \norm{p(V)- p_d}_{2}^2,\qquad J_3(C)=\frac{\sigma}{2} \norm{\nabla (C - C_{\text{ref}})}_{2}^2.
\]
In Figure~\ref{fig:J_J1}(left), one clearly sees the monotone decrease in the cost functional $J$, as required. The positive part of the doping profile and analogously the electron density is reduced compared to the starting reference profile. On the other hand, we notice that the hole density is reduced as well, although the goal of $J_2$ is to increase the hole density. This happens because the initial values of $J_1$ are larger than those of $J_2$ (cf. Figure~\ref{fig:J_J1}(right) and Figure~\ref{fig:J2_J3}(left), respectively). The optimization therefore tends to minimize $J_2$ first. Due to the integral constraints on $C$ and the definitions of $N$ and $P$ stated above, it is not possible to enlarge the $p$ part. This behaviour is similar for all values of $\lambda$. 

The simulations with larger $\lambda^2$ ($10^{-4}$ and $10^{-5}$) do one Armijo step at the beginning. In the case $\lambda^2=10^{-4}$, there are two more Armijo steps needed in the second and third iteration. All other iterates accept the new iterate immediately.

\begin{figure}
\centering
    \begin{minipage}{0.49\linewidth}
        \centering
        \includegraphics[width = 1.\textwidth]{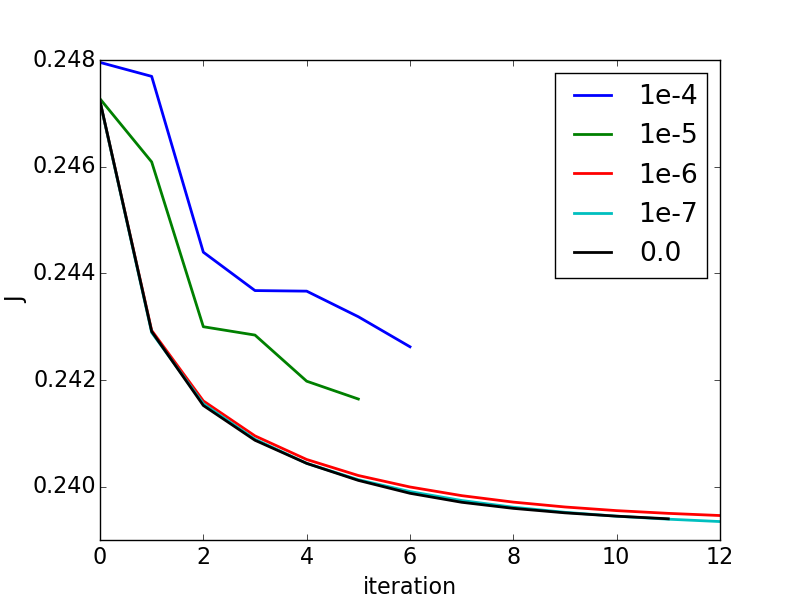}
    \end{minipage}
    \begin{minipage}{0.49\linewidth}
        \centering
        \includegraphics[width = 1.\textwidth]{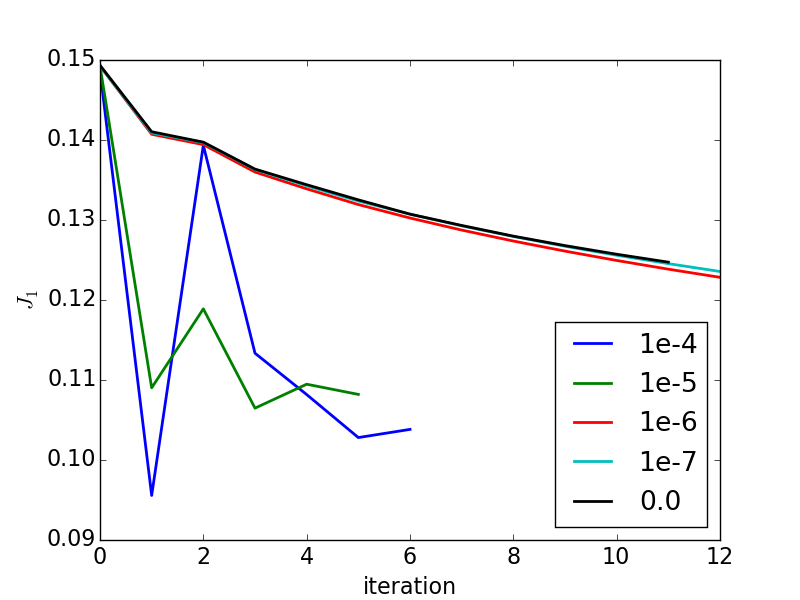}
    \end{minipage}
    \caption{Left: Development of $J$ for different values of $\lambda^2$. Right: Development of $J_1$ for different values of $\lambda^2$.}
    \label{fig:J_J1}
\end{figure}

\begin{figure}
\centering
    \begin{minipage}{0.49\linewidth}
        \centering
        \includegraphics[width = 1.\textwidth]{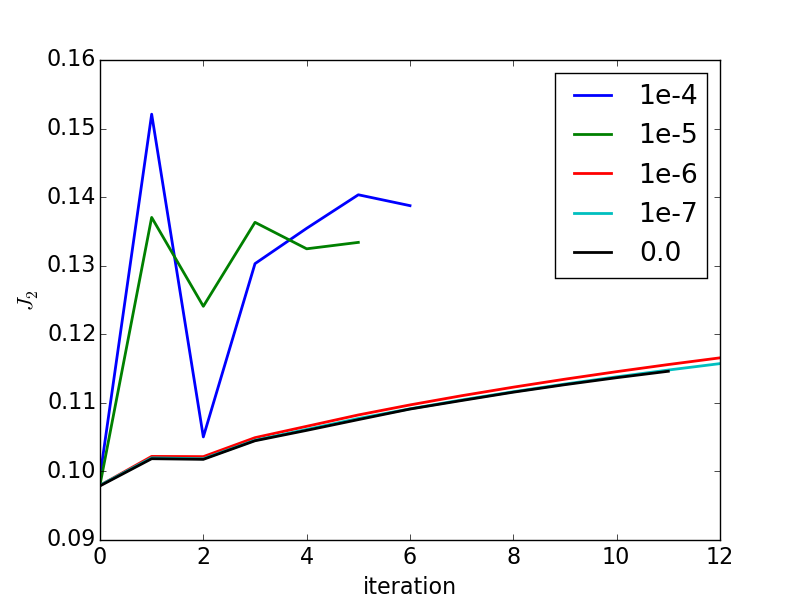}
    \end{minipage}
    \begin{minipage}{0.49\linewidth}
        \centering
        \includegraphics[width = 1.\textwidth]{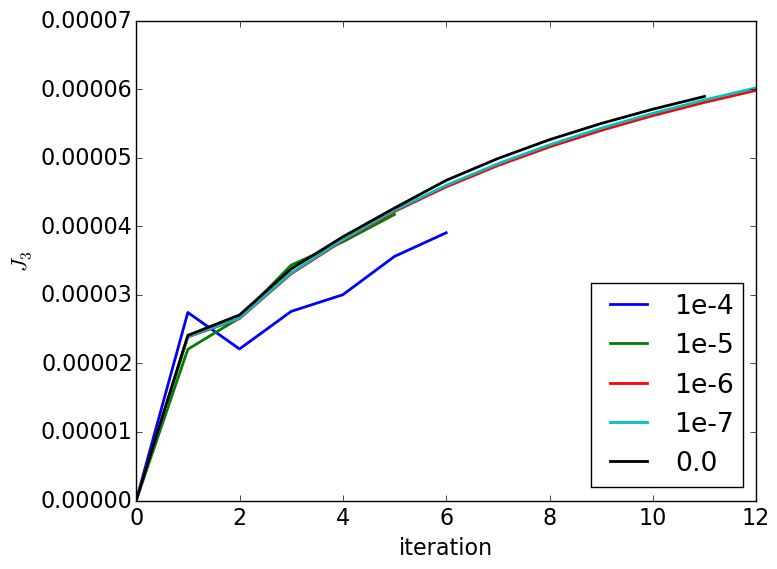}
    \end{minipage}
    \caption{Left: Development of the second term of cost functional for different values of $\lambda^2$. Right: Development of the third term of the cost functional for different values of $\lambda^2$.}
    \label{fig:J2_J3}
\end{figure}



\bibliographystyle{plain}
\bibliography{biblio}

\begin{thebibliography}{10}

\bibitem{Alt}
H.W. Alt.
\newblock {\em Lineare Funktionalanalysis}.
\newblock Springer Verlag, 4. edition, 2002.

\bibitem{Braides}
A.~Braides.
\newblock {\em Gamma-Convergence for Beginners}.
\newblock Oxford lecture series in mathematics and its applications. Oxford
  University Press, 2002.

\bibitem{BuPi03}
M.~Burger and R.~Pinnau.
\newblock Fast optimal design of semiconductor devices.
\newblock {\em SIAM Journal on Applied Mathematics}, 64(1):108--126, 2003.

\bibitem{BuPi09}
M.~Burger and R.~Pinnau.
\newblock A globally convergent gummel map for optimal dopant profiling.
\newblock {\em Mathematical Models and Methods in Applied Sciences},
  19(05):769--786, 2009.

\bibitem{burger2014optimal}
M.~Burger, R.~Pinnau, M.~Fouego, and S.~Rau.
\newblock Optimal control of self-consistent classical and quantum particle
  systems.
\newblock In {\em Trends in PDE Constrained Optimization}, pages 455--470.
  Springer, 2014.

\bibitem{BuPiWo08}
M.~Burger, R.~Pinnau, and M.-T. Wolfram.
\newblock On/off-state design of semiconductor doping models.
\newblock {\em Communications in Mathematical Sciences}, 6(4):1021--1041, 2008.

\bibitem{BSFWIK93semicond2}
S.~Busenberg, W.~Fang, and K.~Ito.
\newblock Modeling and analysis of laser-beam-induced current images in
  semiconductors.
\newblock {\em SIAM J. Appl. Math.}, 53:187--204, 1993.

\bibitem{cheng2011recovering}
Y.~Cheng, I.M. Gamba, and K.~Ren.
\newblock Recovering doping profiles in semiconductor devices with the
  boltzmann--poisson model.
\newblock {\em Journal of Computational Physics}, 230(9):3391--3412, 2011.

\bibitem{neutral2}
S.~Cordier and E.~Grenier.
\newblock Quasi-neutral limit of an euler-poisson system arising from plasma
  physics.
\newblock {\em Communications in Partial Differential Equations},
  25(5-6):1099--1113, 2000.

\bibitem{diebold96}
A.C. Diebold, M.R. Kump, J.J Kopanski, and D.G. Seiler.
\newblock Characterization of two-dimensional dopant profiles: Status and
  review.
\newblock {\em Journal of Vacuum Science \& Technology B}, 14(1):196--201,
  1996.

\bibitem{drago2013semiconductor}
C.R. Drago, N.~Marheineke, and R.~Pinnau.
\newblock Semiconductor device optimization in the presence of thermal effects.
\newblock {\em ZAMM-Journal of Applied Mathematics and Mechanics/Zeitschrift
  f{\"u}r Angewandte Mathematik und Mechanik}, 93(9):700--705, 2013.

\bibitem{drago2008optimal}
C.R. Drago and R.~Pinnau.
\newblock Optimal dopant profiling based on energy-transport semiconductor
  models.
\newblock {\em Mathematical Models and Methods in Applied Sciences},
  18(02):195--214, 2008.

\bibitem{Patent12}
H.W. Engl, M.~Burger, and R.S. Eisenberg.
\newblock Mathematical design of ion channel selectivity via inverse problem
  technology, 2012.
\newblock US Patent 8,335,671.

\bibitem{FWIK92semicond1}
W.~Fang and K.~Ito.
\newblock {Identifiability of Semiconductor Defects from LBIC Images}.
\newblock {\em SIAM J. Appl. Math.}, 52:1611--1625, 1992.

\bibitem{FangIto}
W.~Fang and K.~Ito.
\newblock Reconstruction of semiconductor doping profile from
  laser-beam-induced current image.
\newblock {\em SIAM Journal on Applied Mathematics}, 54(4):1067--1082, 1994.

\bibitem{neutral3}
I.~Gasser, L.~Hsiao, P.A. Markowich, and S.~Wang.
\newblock Quasi-neutral limit of a nonlinear drift diffusion model for
  semiconductors.
\newblock {\em Journal of mathematical analysis and applications},
  268(1):184--199, 2002.

\bibitem{Gum64}
H.K. Gummel.
\newblock A self--consistent iterative scheme for one--dimensional steady state
  transistor calculations.
\newblock {\em IEEE Trans.~Elec.~Dev.}, ED--11:455--465, 1964.

\bibitem{hackbusch2012integral}
W.~Hackbusch.
\newblock {\em Integral Equations: Theory and Numerical Treatment}, volume 120.
\newblock Birkh{\"a}user, 2012.

\bibitem{HiPi00b}
M.~Hinze and R.~Pinnau.
\newblock Optimal control of the drift diffusion model for semiconductor
  devices.
\newblock In K.-H. Hoffmann, I.~Lasiecka, G.~Leugering, and J.~Sprekels,
  editors, {\em Optimal Control of Complex Structures}, volume 139 of {\em
  ISNM}, pages 95--106. Birkh\"auser, 2001.

\bibitem{PinHin}
M.~Hinze and R.~Pinnau.
\newblock An optimal control approach to semiconductor design.
\newblock {\em Mathematical Models and Methods in Applied Sciences},
  12(01):89--107, 2002.

\bibitem{HiPi00}
M.~Hinze and R.~Pinnau.
\newblock An optimal control approach to semiconductor design.
\newblock {\em Math.~Mod.~Meth.~Appl.~Sc.}, 12(1):89--107, 2002.

\bibitem{HiPi07}
M.~Hinze and R.~Pinnau.
\newblock Second-order approach to optimal semiconductor design.
\newblock {\em Journal of optimization theory and applications},
  133(2):179--199, 2007.

\bibitem{Pinnau}
M.~Hinze, R.~Pinnau, M.~Ulbrich, and S.~Ulbrich.
\newblock {\em Optimization with PDE Constraints}.
\newblock Springer, 2009.

\bibitem{Jun11}
A.~J{\"u}ngel.
\newblock {\em Quasi-hydrodynamic Semiconductor Equations}, volume~39.
\newblock Birkh{\"a}user, 2011.

\bibitem{neutral4}
A.~J{\"u}ngel and Y.-J. Peng.
\newblock A hierarchy of hydrodynamic models for plasmas. quasi-neutral limits
  in the drift-diffusion equations.
\newblock {\em Asymptotic Analysis}, 28(1):49--73, 2001.

\bibitem{khalil95}
N.~Khalil, J.~Faricelli, D.~Bell, and S.~Selberherr.
\newblock The extraction of two-dimensional mos transistor doping via inverse
  modeling.
\newblock {\em Electron Device Letters, IEEE}, 16(1):17--19, 1995.

\bibitem{kim10}
Y.-B. Kim.
\newblock Challenges for nanoscale mosfets and emerging nanoelectronics.
\newblock {\em Transactions on Electrical and Electronic Materials},
  11(3):93--105, 2010.

\bibitem{kinderlehrer}
D.~Kinderlehrer and G.~Stampacchia.
\newblock {\em An Introduction to Variational Inequalities and Their
  Applications}.
\newblock Classics in Applied Mathematics. Society for Industrial and Applied
  Mathematics, 2000.

\bibitem{li2013geometric}
Y.~Li and Y.-C. Chen.
\newblock Geometric programming approach to doping profile design optimization
  of metal-oxide-semiconductor devices.
\newblock {\em Mathematical and Computer Modelling}, 58(1):344--354, 2013.

\bibitem{logg2012automated}
A.~Logg, K.-A. Mardal, and G.~Wells.
\newblock {\em Automated Solution of Differential Equations by the Finite
  Element Method: The FEniCS Book}, volume~84.
\newblock Springer Science \& Business Media, 2012.

\bibitem{MaPi12}
N.~Marheineke and R.~Pinnau.
\newblock Model hierarchies in space-mapping optimization: Feasibility study
  for transport processes.
\newblock {\em Journal of Computational Methods in Sciences and Engineering},
  12(1-2):63--74, 2012.

\bibitem{Mar86}
P.A. Markowich.
\newblock {\em The Stationary Semiconductor Device Equations}.
\newblock Springer--Verlag, Wien, first edition, 1986.

\bibitem{MaRiSch90}
P.A. Markowich, C.A. Ringhofer, and C.~Schmeiser.
\newblock {\em Semiconductor Equations}.
\newblock Springer--Verlag, Wien, first edition, 1990.

\bibitem{MarkovichRinghoferSchmeiser}
P.A. Markowich, C.A. Ringhofer, and C.~Schmeiser.
\newblock {\em Semiconductor Equations}.
\newblock Springer-Verlag, 1990.

\bibitem{DalMaso}
G.D. Maso.
\newblock {\em An Introduction to Gamma-Convergence}.
\newblock Progress in Nonlinear Differential Equations and Their Applications.
  Birkh{\"a}user Boston, 1993.

\bibitem{Moc83}
M.~S. Mock.
\newblock {\em Analysis of Mathematical Models of Semiconductor Devices}.
\newblock Boole Press, Dublin, first edition, 1983.

\bibitem{pinnau2014semi}
R.~Pinnau, S.~Rau, F.~Schneider, and O.~Tse.
\newblock The semi-classical limit of an optimal design problem for the
  stationary quantum drift-diffusion model.
\newblock {\em arXiv preprint arXiv:1402.5518}, 2014.

\bibitem{sattinger1972monotone}
D.H. Sattinger.
\newblock Monotone methods in nonlinear elliptic and parabolic boundary value
  problems.
\newblock {\em Indiana University Mathematics Journal}, 21(11):979--1000, 1972.

\bibitem{struwe}
M.~Struwe.
\newblock {\em Variational Methods: Applications to Nonlinear Partial
  Differential Equations and Hamiltonian Systems, Third Edition}.
\newblock 3. Springer, 2000.

\bibitem{Sze}
S.M. Sze and K.K. Ng.
\newblock {\em Physics of Semiconductor Devices}.
\newblock Wiley-Interscience, 2007.

\bibitem{Troeltzsch}
F.~Troeltzsch.
\newblock {\em Optimale Steuerung partieller Differentialgleichungen}.
\newblock Vieweg+Teubner, 2009.

\bibitem{Unt94}
A.~Unterreiter.
\newblock The thermal equilibrium state of semiconductor devices.
\newblock {\em Applied Mathematics Letters}, 7(6):35--38, 1994.

\bibitem{Unterreiter}
A.~Unterreiter.
\newblock The thermal equilibrium solution of a generic bipolar quantum
  hydrodynamic model.
\newblock {\em Communications in mathematical physics}, 188(1):69--88, 1997.

\bibitem{UnVo07}
A.~Unterreiter and S.~Volkwein.
\newblock Optimal control of the stationary quantum drift-diffusion model.
\newblock {\em Communications in Mathematical Sciences}, 5(1):85--111, 2007.

\bibitem{neutral1}
S.~Wang, Z.~Xin, and P.A. Markowich.
\newblock Quasi-neutral limit of the drift diffusion models for semiconductors:
  The case of general sign-changing doping profile.
\newblock {\em SIAM journal on mathematical analysis}, 37(6):1854--1889, 2006.

\end{thebibliography}

\end{document}